\documentclass[a4paper,10pt]{article}
\pagestyle{headings}
\usepackage[normalem]{ulem}
\normalem


\usepackage{amsmath,amsthm,amssymb,enumerate}
\usepackage[T1]{fontenc}
\usepackage{gensymb}
\usepackage[square,sort&compress,comma,numbers]{natbib}
\usepackage[sc]{mathpazo}
\usepackage{multirow} 
\usepackage{newtxtext,newtxmath}
\usepackage{subfig}
\usepackage{graphicx}
\usepackage{epstopdf}
\usepackage{cancel}
\usepackage[left=2.5cm, right=2.2cm, top=2cm]{geometry}
\usepackage{tikz}
\usepackage{caption}
\usetikzlibrary{shapes,calc}
\usepackage{verbatim}
\usepackage{mathrsfs}
\usepackage{accents}
\usepackage[utf8]{inputenc}
\usepackage{appendix}

\usetikzlibrary{decorations.pathmorphing}
\usetikzlibrary{decorations.pathreplacing}
\usetikzlibrary{positioning}
\usetikzlibrary{shapes}
\usetikzlibrary{arrows}
\usetikzlibrary{patterns}
\usetikzlibrary{fadings}
\usetikzlibrary{plotmarks}
\usetikzlibrary{calc}
\usetikzlibrary{intersections}
\tikzstyle{every picture}+=[font=\footnotesize]
\usepackage{paralist}
%
\usepackage{bbm}
\usepackage{latexsym}           
\usepackage{enumerate}
\usepackage{enumitem}

\setlist{noitemsep, topsep=0.8ex, partopsep=0pt
	, leftmargin=2em}
\setlist[1]{labelindent=\parindent}

\newlist{axioms}{enumerate}{1}
\setlist[axioms]{font=\bfseries}

\newlist{alphenum}{enumerate}{1}
\setlist[alphenum]{label=\textbf{(\alph*)}, leftmargin=3em}

\newlist{alphienum}{enumerate}{1}
\setlist[alphienum]{label=\textit{(\alph*)}}

\newlist{romanenum}{enumerate}{1}
\setlist[romanenum]{label=\textit{(\roman*)}}

\newlist{romaninenum}{enumerate*}{1}
\setlist[romaninenum]{label=\textit{(\roman*)}}
\usepackage[ruled]{algorithm2e}
\SetKwIF{If}{ElseIf}{Else}{if}{}{else if}{else}{endif}
\SetKwFor{For}{for}{}{endfor}
\usepackage[noabbrev, capitalise]{cleveref}
\usepackage{tabu}
\tabulinesep=0.5ex
\crefname{equation}{\unskip}{\unskip}
\creflabelformat{equation}{#2(#1)#3}
\usepackage[ruled]{algorithm2e}

\newtheorem{lemma}{Lemma}

  \SetFuncSty{textsc}
  \SetKwFunction{frun}{Run}
  \SetKwHangingKw{arun}{\frun}
  \SetKwFunction{fset}{Set}
  \SetKwHangingKw{aset}{\fset}
  \SetKwFunction{ffind}{Find}
  \SetKwHangingKw{afind}{\ffind}
  \SetKwFunction{fselect}{Select}
  \SetKwHangingKw{aselect}{\fselect}
  \SetKwFunction{fcompute}{Compute}
  \SetKwHangingKw{acompute}{\fcompute}
  \SetKwFunction{fsolve}{Solve}
  \SetKwHangingKw{asolve}{\fsolve}
  \SetKwFunction{fupdate}{Update}
  \SetKwHangingKw{aupdate}{\fupdate}
  \SetKwFunction{festimate}{Estimate}
  \SetKwHangingKw{aestimate}{\festimate}
  \SetKwFunction{fmark}{Mark}
  \SetKwHangingKw{amark}{\fmark}
   \SetKwFunction{fbreak}{Break}
   \SetKwHangingKw{abreak}{\fbreak}
  \SetKwFunction{frefine}{Refine}
  \SetKwHangingKw{arefine}{\frefine}
  \SetKwFunction{compute}{Compute}
  \SetKwFunction{set}{Set}

  {\algorithm}%
  {\endalgorithm}
  
%





\hfuzz1pc 


\newtheorem{thm}{Theorem}[section]
\newtheorem{remark}{Remark}[section]

\newtheorem{lem}[thm]{Lemma}

\theoremstyle{definition}

\theoremstyle{remark}
\newtheorem{rem}{Remark}[section]
\newtheorem{example}{\bf Example}[section]

\numberwithin{equation}{section}


\newcommand{\bT}{\mathbb T}
\newcommand{\bV}{\text{\bf V}}

\newcommand{\cA}{\mathcal A}

\newcommand{\cE}{\mathcal E}

\newcommand{\cT}{\mathcal T}

\newcommand{\cJ}{\mathcal J}

\newcommand{\bz}{\boldsymbol{z}}

 %
 %

\newcommand{\bH}{\boldsymbol{H}}
\newcommand{\bL}{\boldsymbol{L}}

\newcommand{\hto}{H^2_0(\Omega)}

\newcommand{\NC}{\text{NC}}

\newcommand{\vket}{von K\'{a}rm\'{a}n equations }

\newcommand{\sit}{\sum_{T \in\mathcal{T}}\int_T}

\newcommand{\fl}{\;\text{ for all }}

\newcommand{\half}{\frac{1}{2}}
\newcommand{\trinl}{\ensuremath{|\!|\!|}}
\newcommand{\trinr}{\ensuremath{|\!|\!|}}

\newcommand{\dx}{{\rm\,dx}}

\newcommand{\ds}{{\rm\,ds}}



\newcommand{\M}{\text{M}}
\newcommand{\N}{\mathcal{N}}


\newcommand{\T}{\mathcal{T}}
\renewcommand{\P}{\mathcal{P}}




\newcommand{\hPsi}{\widehat{\Psi}}
\newcommand{\hTheta}{\widehat{\Theta}}
\newcommand{\bPsi}{\bar{\Psi}}
\newcommand{\bTheta}{\bar{\Theta}}
\newcommand{\bpsi}{\bar{\psi}}
\newcommand{\btheta}{\bar{\theta}}

\def \P{{{\cal P}}}

\allowdisplaybreaks

\def\mT{\mathbb{T}}
\def\cA{\mathcal{A}}

\def\O{\Omega}

\newcommand{\jc}{\mathrm{J}}

\def\cJ{\mathcal{J}}

\def\cT{\mathcal{T}}
\def\cE{{\mathcal{E}}(\Omega)}

\def\bchi{\boldsymbol{\chi}}
\def\brho{\boldsymbol{\rho}}
\def\bxi{\boldsymbol{\xi}}
\def\bg{\boldsymbol g}
\newcommand{\tnr}[1]{\trinl #1 \trinr}
\def\N{\mathbb{N}}

\newcommand{\norm}[1]{\left\Vert #1 \right\Vert}

\title{A posteriori error analysis for a distributed optimal control problem governed by the \vket}
\author{Sudipto Chowdhury\footnote{sudipto.choudhary@lnmiit.ac.in, Department of Mathematics, The LNM Institute of Information Technology, Jaipur 302031, India}, Asha K. Dond \footnote{ashadond@iisertvm.ac.in, School of Mathematics, Indian Institute of Science Education and Research Thiruvananthapuram 695551, India}, Neela Nataraj\footnote{neela@math.iitb.ac.in, Indian Institute of Technology Bombay, Powai, Mumbai 400076, India} and Devika Shylaja\footnote{011353@imail.iitm.ac.in, Department of Mathematics, Indian Institute of Technology Madras, Chennai 600036, India}}

\begin{document}
\maketitle
\begin{abstract} 
\noindent This article discusses numerical analysis of the distributed optimal control problem governed by the \vket defined on a polygonal domain in $\mathbb{R}^2$.  The state and adjoint variables are discretised using the nonconforming Morley finite element method and the control is discretized using piecewise constant functions. {\it A priori}  and  {\it a posteriori} error estimates are derived for the state, adjoint and control variables. The {\it a posteriori} error estimates are shown to be efficient. Numerical results that confirm the theoretical estimates are presented. 
\end{abstract}

\noindent{\bf Keywords:} {von K\'{a}rm\'{a}n equations, distributed control,  plate bending, non-linear, nonconforming, Morley FEM, {\it a~priori}, {\it a~posteriori}, error estimates}
\section{Introduction}

\subsection*{Problem formulation}

\noindent
Let $\Omega\subset\mathbb{R}^2$ be a polygonal domain and $\nu$ denotes the outward normal vector to the boundary $\partial\Omega$ of $\Omega$. This paper considers the  distributed control problem governed by the von K\'{a}rm\'{a}n equations stated below:
\begin{subequations}
	\begin{align}
	&  \min_{u \in U_{ad}} \cJ(\Psi, u) \,\,\, \textrm{ subject to } \label{cost}\\
	&  \Delta^2  \psi_1 =[\psi_1 ,\psi_2]+f +{\mathcal C}u, \,\,  \Delta^2  \psi_2 =- \half[\psi_1 ,\psi_1]\mbox{ in } \Omega, \label{state} \\
	& \psi_1=0,\,\frac{\partial \psi_1}{\partial \nu}=0\text{ and } \psi_2=0,\,\frac{\partial  \psi_2}{\partial \nu} =0
	\text{  on }\partial\Omega. \label{state3}
	\end{align}
\end{subequations}
Here the cost functional 
$\displaystyle \cJ(\Psi, u) :=\frac{1}{2}\trinl \Psi- \Psi_d \trinr_{\bL^2(\Omega)}^2  + \frac{\alpha}{2}\|{u}\|_{L^2(\omega)}^2 \: \dx$, the state variable  $\Psi:=(\psi_1,\psi_2)$, where $\psi_1$ and $\psi_2$ correspond to the displacement and Airy-stress,  $\Psi_d:={(\psi_{d,1}, \psi_{d,2})}\in \bL^2(\Omega):=L^2(\Omega)\times L^2(\Omega)$ is the prescribed desired state for $\Psi$, $\trinl \Psi- \Psi_d \trinr_{\bL^2(\Omega)}^2:=\sum_{i=1}^2\|\psi_i-\psi_{d,i}\|_{L^2(\Omega)}^2$, $\alpha>0$ is a fixed regularization parameter, $U_{ad} \subset L^2(\omega)$, $\omega \subset \Omega$ is a non-empty, closed, convex and bounded set of admissible controls defined by
$$ \displaystyle
U_{ad} = \{   u \in L^2(\omega) : u_a \le  u(x) \le u_b  \;  \mbox{ for almost every }  x \mbox { in }  \omega \},$$
$u_a \le  u_b \in {\mathbb R}$ are given, $\Delta^2$ denotes the fourth-order biharmonic operator, the von K\'{a}rm\'{a}n bracket $ [\eta,\chi]:=\eta_{xx}\chi_{yy}+\eta_{yy}\chi_{xx}-2\eta_{xy}\chi_{xy}={\rm cof}(D^2\eta):D^2\chi$ with the co-factor matrix 
${\rm cof}(D^2\eta)$  of $D^2\eta$,  $f \in L^2(\Omega)$, and {${\mathcal C }\in {\mathcal L}(L^2(\omega), L^2(\Omega))$ is the extension operator defined by 
	$
	{\mathcal C }u(x)=u(x)\mbox{ if }x\in\omega \mbox{ and }  {\mathcal C }u(x)=0 \mbox{ if }x\not\in\omega.$ }

\subsection*{Motivation}

The {\bf optimal control problem} governed by the \vket  \eqref{cost}-\eqref{state3} is analysed in \cite{ngr} for $C^1$ conforming finite elements. In \cite{SCNNDS}, {\it a priori} error estimates are derived under minimal regularity assumptions on the exact solution where the state and adjoint variables are discretised using Morley finite element methods (FEMs). The discrete trilinear form in the weak formulation \cite{SCNNDS} is derived after an integration by parts. In this article, a simplified form of  the trilinear form that involves the von K\'{a}rm\'{a}n bracket itself is considered. This choice of the trilinear form \cite{CCGMNN18} is appropriate for  both reliable and efficient {\it a posteriori} estimates. {\it  To the best of our knowledge, there are no results in literature that discuss {\it a posteriori} error analysis for the approximation of regular solutions of optimal control problems governed by von K\'{a}rm\'{a}n equations.} Recently, {\it a posteriori} error analysis for the optimal control problem governed by second-order stationary Navier-Stokes equations is studied in \cite{aposterioriNScontrol_2021} with conforming finite element method under smallness assumption on the data. The  trilinear form in \cite{aposterioriNScontrol_2021} vanishes whenever the second and third variables are equal, and satisfies the anti-symmetric property with respect to the second and third variables and this aids the {\it a posteriori} error analysis. This paper discusses approximation of regular solutions for fourth-order semi-linear problems without any smallness assumption on the data. Moreover, the trilinear form for \vket does not satisfy the properties stated above and hence leads to additional challenges in the analysis. 

\medskip
\noindent The {\bf \vket} \cite{CiarletPlates}  that describes the bending of very thin elastic plates offers challenges in its numerical approximation; mainly due to its nonlinearity and higher order nature; we refer to \cite{CiarletPlates, Knightly, Fife, Berger,BergerFife, BlumRannacher} and the references therein for the existence of solutions, regularity and bifurcation phenomena of the von K\'{a}rm\'{a}n equations. The numerical analysis of \vket has been studied using conforming FEMs in \cite{Brezzi,ng1},  nonconforming Morley FEM in \cite{ng2,carstensen2017nonconforming}, mixed FEMs  in \cite{Miyoshi,Chen2020AMF}, discontinuous Galerkin methods and $C^0$ interior penalty methods in \cite{brennernew,CCGMNN18}. 

\medskip

\noindent {\bf Nonconforming Morley} FEM based on piecewise quadratic polynomials in a triangle is more elegant, attractive and simpler for fourth-order problems. However, the convergence analysis offers a lot of novel challenges in the context of control problems governed by semilinear problems with trilinear nonlinearity since the discrete space $V_ {\M}$ is not a subspace of $H^2_0(\O)$. The adjoint variable in the control problem  satisfies a fourth-order linear problem with lower-order terms and its {\it a priori} and {\it a posteriori} analysis with Morley FEM offers additional difficulties. 

\medskip
\noindent The regularity results of \vket in \cite{BlumRannacher} extends to the regularity of the state and adjoint variables of the control problem \cite{ngr} and ensures that the  optimal state and adjoint variables belong to $\hto\cap H^{2+\gamma}(\Omega)$, where $\gamma\in (\half,1]$, referred to as the index of elliptic regularity, is determined by the interior angles of $\Omega$. Note that when $\Omega$ is convex, $\gamma=1$.

\smallskip

\noindent 

\subsection*{Contributions}

\noindent In continuous formulation (see \eqref{wform}) and the conforming FEM \cite{ngr}, the trilinear form $b(\bullet,\bullet,\bullet)$ is symmetric with respect to all the three variables; that makes the analysis simpler to a certain extent. However, for fourth-order systems, nonconforming Morley FEM is attractive and is {\it a method of choice} \cite{carstensen2017nonconforming} and this motivated the {\it a priori} analysis for the optimal control problem in  \cite{SCNNDS}. The expression for the discrete trilinear form $\displaystyle b_\NC(\bullet,\bullet,\bullet)$ in \cite{SCNNDS}  defined as $ \half\sit {\rm{cof}} (D^2 \eta_\M) D \chi_\M\cdot D\varphi_\M \dx$ for all Morley functions $ \eta_\M,\chi_\M \; \text{and} \; \varphi_\M$ is obtained after an integration by parts, where $\T$ denotes the triangulation of $\O$. This form is symmetric with respect to the second and third variables. Though this choice of trilinear form leads to optimal order error estimates for the optimal control problem \eqref{cost}-\eqref{state3}, it leads to terms that involve {\it averages} in the reliability analysis of the state equations (as in the case of Navier-Stokes equation considered in  \cite{carstensen2017nonconforming}). The {\it efficiency} estimates are unclear in this context. 
To overcome this, a more natural trilinear form $\displaystyle b_\NC(\eta_\M,\chi_\M,\varphi_\M)=-\half\sit [\eta_\M,\chi_\M]\varphi_\M \dx$  that  is symmetric with respect to the first and second variables is chosen in this article. The {\it a priori} and {\it a posteriori} analysis for the {\it state equations}  are  discussed in \cite{carstensen2017nonconforming,CN2020}. The {\it a posteriori} analysis for the fully discrete optimal control problem governed by \vket addressed in this article is novel and involves additional difficulties. For instance, the adjoint system in this case involves lower-order terms with leading biharmonic operators. {\it A posteriori} analysis for biharmonic operator with lower-order terms is a problem of independent interest. 

\medskip
\noindent Thus the contributions of this article can be summarized as follows.
\begin{itemize}
    \item 
    For a formulation that is different from that in \cite{SCNNDS}, optimal order {\it a priori} error estimates in energy norm when state and adjoint variables are approximated by Morley FEM and  linear order of convergence  for control variable in $L^2$ norm when control is approximated using piece-wise constants are outlined. 
    \item Reliable and efficient  {\it a posteriori} error estimates that drive the adaptive refinement for the optimal state and adjoint variables in the energy norm and control variable in the $L^2$ norm are developed. The approach followed in this paper provides a strategy for the nonconforming FEM analysis of  optimal control problems governed by higher-order semi-linear problems. 
    \item Several auxiliary results that are derived  will be of interest in other applications - for example, optimal control problems governed by Navier-Stokes problems in the stream-vorticity formulation.
    \item The paper illustrates results of computational experiments that validate both theoretical  {\it a priori} and {\it a posteriori} estimates for the optimal control problem under consideration. 
\end{itemize}


\subsection*{Organisation}
\noindent The remaining parts of this paper are organised as follows. Section \ref{sec.weak} presents the weak and the nonconforming finite element formulations for \eqref{cost}-\eqref{state3}. The state and adjoint variables are discretised using Morley finite elements and the control variable is discretised using piecewise constant functions. Section \ref{sec.discreteformulation} deals  some preliminaries related to Morley FEM. The boundedness properties of the discrete bilinear and trilinear forms that are crucial for the error analysis are  discussed in this section. {\it A priori} error estimates for the state, adjoint and control variables under minimal regularity assumptions on the exact solution are stated in Section \ref{sec.apriori}. Note that the analysis differs from \cite{SCNNDS} due to a different trilinear form. Section \ref{sec.reliability} develops reliable {\it a posteriori} estimates for the state, adjoint and control variables of the optimal control problem. Section \ref{sec.efficiency} establishes efficiency results for the optimal control problem. Results of numerical experiments that validate theoretical estimates are presented in Section \ref{sec.numericalresults}. Finally, details of proofs of some results stated in Section \ref{sec.apriori} are derived in the Appendix.

\subsection*{Notations}

\noindent
 Throughout the paper, standard notations on Lebesgue and
 Sobolev spaces and their norms are employed. The standard semi-norm and norm on $H^{s}(\Omega)$ (resp. $W^{s,p} (\Omega)$) for $s>0$ and $1 \le p \le \infty$ are denoted by $|\cdot|_{s}$ and $\|\cdot\|_{s}$ (resp. $|\cdot|_{s,p}$ and $\|\cdot\|_{s,p}$ ) and norm in $L^\infty(\O)$ is denoted by $\|\cdot \|_{0,\infty}$. The norm in $H^{-s}(\Omega)$ is denoted by $\|\cdot\|_{-s}$. The standard $L^2$ inner product and norm are denoted by $(\cdot, \cdot)$ and $\|\cdot\|.$ The notation $\|\cdot\|$ is also used to denote the operator norm and should be understood from the context. The notation ${\bH}^s(\Omega)$ (resp. ${\boldsymbol{L}}^p(\Omega)$) is used to denote the product space  
 $H^{s}(\Omega) \times H^s(\Omega)$ (resp. $L^p(\Omega) \times L^p(\Omega)$). For all $\Phi = (\varphi_1,\varphi_2) \in {\bH}^s(\Omega) \; ( \text{resp. } {\boldsymbol L}^2(\Omega))$, the product space is equipped with the norm
 $
 \trinl{\Phi}\trinr_{s}:=(\| \varphi_1\|_s^2 +\|\varphi_2\|_s^2)^{1/2} \; 
 (\text{ resp. } \trinl{\Phi}\trinr:=(\| \varphi_1\|^2 +\|\varphi_2\|^2)^{1/2}). \; 
 $
 The notation $a\lesssim b$ (resp. $a \gtrsim  b$) means there exists a generic mesh independent constant $C$ such that $a\leq Cb$ (resp. $a\ge Cb$).  The positive constants $C$ appearing in the inequalities denote generic constants which do not depend on the mesh-size. 
 
 \section{Weak and Finite Element Formulations}\label{sec.weak}
\noindent 
 In this section, the weak and Morley FEM formulations for \eqref{cost}-\eqref{state3} and some auxiliary results are presented.
 
 \subsection{Weak Formulation}
 
 \medskip \noindent Let $V:=H^2_0(\Omega)$,  $\bV=V\times V$, the bilinear (resp. trilinear) form $a(\bullet,\bullet):V\times V\rightarrow \mathbb{R}$ (resp. $b(\bullet,\bullet,\bullet):V\times V\times V\rightarrow \mathbb{R}$) be defined by
 \begin{align*}
 a(\varphi_1,\varphi_2):=\int_{\Omega}D^2\varphi_1:D^2\varphi_2\; \dx \;  \; 
 (\mbox{resp. }b(\varphi_1,\varphi_2,\varphi_3):=-\frac{1}{2}\int_{\Omega}[ \varphi_1,\varphi_2]\varphi_3\; \dx).
 \end{align*}
  For all $\varphi_1, \varphi_2,\varphi_3\in V$, the bilinear  and trilinear forms $a(\bullet, \bullet)$ and $b(\bullet, \bullet, \bullet)$ satisfy
 \begin{equation*}
 |{a}(\varphi_1, \varphi_2)| \leq \|\varphi_1\|_2 \: \|\varphi_2\|_2, \; \; |{a}(\varphi_1,\varphi_1)| \geq \|   {\varphi_1} \|_2^2 \; 
 \; {\rm and }  \; |b(\varphi_1,\varphi_2,\varphi_3)|  \lesssim \|\varphi_1\|_2 \: \| \varphi_2 \|_2 \: \|\varphi_3\|_2.
 \end{equation*}
 \medskip\noindent 
 The weak formulation that corresponds to \eqref{cost}-\eqref{state3} seeks $(\Psi, u) \in  \bV \times  U_{ad}$  such that 
 \begin{subequations}\label{wform}
 	\begin{align}
 	&  \min_{(\Psi, \,u) \in  \bV \times  U_{ad}} \cJ(\Psi, u) \,\,\, \textrm{ subject to } \label{cost1}\\
 	& a(\psi_1,\varphi_1)+ b(\psi_1,\psi_2,\varphi_1)+b(\psi_{2},\psi_{1},\varphi_1)=(f+\mathcal{C}u,\varphi_1)   \; \; \fl \varphi_1\in V\label{wforma},\\
 	& a(\psi_2,\varphi_2)-b(\psi_1,\psi_1,\varphi_2)   =0       \; \;     \fl \varphi_2 \in V\label{wformb}.
 	\end{align}
 \end{subequations}
 For a given $u \in L^2(\omega)$, \eqref{wforma}-\eqref{wformb} possesses at least one solution \cite{Knightly}. 
 \medskip
For all  ${\boldsymbol \xi}=(\xi_1,\xi_2)$, $\Phi=(\varphi_1,\varphi_2)$, ${\boldsymbol \eta}=(\eta_1,\eta_2) \in \bV$, the operator form for \eqref{wforma}-\eqref{wformb} is 
 \begin{equation}
 \Psi \in \bV, \; \; {\mathcal A} \Psi +{\mathcal B}(\Psi) = {\bf F} +{\mathbf C}{\bf u}  \mbox{ in } \bV', \label{of}
 \end{equation}
 with ${\mathcal A} \in {\mathcal L} (\bV, \bV')$ defined by
 $
 \langle {\mathcal A} {\boldsymbol \xi} ,\Phi   \rangle$ \footnote{The subscripts in the duality pairings are omitted for notational convenience.}$=
 A ({\boldsymbol \xi} ,\Phi)= a(\xi_1,\varphi_1) + a (\xi_2, \varphi_2)$, 
 ${\mathcal B}$ from $\bV$ to
 $\bV'$ defined by 
 $\langle\mathcal{B}({\boldsymbol \eta}),\Phi\rangle= B( {\boldsymbol \eta}, {\boldsymbol \eta}, {\Phi}  )$ where $B( {\boldsymbol \eta}, {\Phi},{\boldsymbol \xi}  )=b(\eta_1,\varphi_2, \xi_1) +b(\eta_2,\varphi_1, \xi_1) - b(\eta_1,\varphi_1, \xi_2)$, $  {\bf F} = \left( \begin{array}{c}
 f  \\
 0  \end{array} \right), $ ${\mathbf C}{\bf u} =
 \left( \begin{array}{c}
 {\mathcal C}u \\
 0  \end{array} \right),$ $
 {\bf u} =\left( \begin{array}{c}
 u \\
 0  \end{array} \right)$,
 { and } 
$ ({\bf F}+ {\mathbf C}{\bf u} ,\Phi):=(f+{\mathcal C}u,\varphi_1)$.
 \smallskip
 
 \noindent
The state equations in \eqref{wforma}-\eqref{wformb} can be written as
$ N(\Psi; \Phi):=  A(\Psi,\Phi)+B(\Psi,\Psi,\Phi)-( {\bf F} +{\mathbf C}{\bf u},\Phi)=0 \mbox{ for all } \Phi \in \bV.$ The first and second-order Fr\'{e}chet derivatives of $ N(\Psi)$ at $\Psi$ in the direction $\bxi$ are given by
$DN(\Psi;\bxi,\Phi):=\langle{\mathcal A}{\boldsymbol \xi}+{\mathcal B}'(\Psi) {\boldsymbol \xi}, \Phi \rangle$
and $D^2N(\Psi;\bxi,\bxi,\Phi):= \langle {\mathcal B}''(\bxi,{ {\boldsymbol \xi}}), \Phi \big \rangle$, where
the operators ${\mathcal B}'(\Psi) \in {\mathcal L}(\bV, \bV')${\footnote {The same notation $'$ is used either to denote the Fr\'echet derivative of an operator or the dual of a space, but the context helps to clarify its precise meaning.}} and $,  {\mathcal B}''(\Psi,{ {\boldsymbol \xi}})\in {\mathcal L}(\bV\times \bV, \bV')$ are given by 
$\langle {\mathcal B}'(\Psi){ {\boldsymbol \xi}}, \Phi \rangle :=
2B(\Psi, { {\boldsymbol \xi}}, \Phi)$ and $\big \langle {\mathcal B}''(\Psi,{ {\boldsymbol \xi}}), \Phi \big \rangle :=
2B(\Psi, { {\boldsymbol \xi}}, \Phi)$.
 
 \medskip \noindent
 For a given $u \in L^2(\omega)$,  a solution $\Psi$ of \eqref{wforma}-\eqref{wformb} is said to be {\it regular} \cite[Definition 2.1]{SCNNDS} if the linearized form is well-posed. In this case, the pair $(\Psi,u)$ also is referred to as a regular  solution to \eqref{state}-\eqref{state3}. 
 The pair $(\bar \Psi, \bar u) \in  
 	\bV \times U_{ad}$ is a {\it local solution} \cite{cmj} to \eqref{wform} if and only if $(\bar \Psi, \bar u)$ satisfies \eqref{wforma}-\eqref{wformb} and there exist neighbourhoods  ${\mathcal O}(\bar\Psi)$ of $\bar \Psi$ in $\bV$ and ${\mathcal O}(\bar u)$  of ${\bar u}$ in $L^2(\omega)$ such that 
 	$\cJ({\bar \Psi},{\bar u} ) \le \cJ(\Psi, u)$ for all pairs $(\Psi, u) \in {\mathcal O}(\bar\Psi)  \times (U_{ad} \cap {\mathcal O}(\bar u))$ that satisfy \eqref{wforma}-\eqref{wformb}.
 \begin{thm}\label{th2.5}\cite{cmj}
 	Let $(\bar{\Psi}, \bar{u}) \in \bV \times L^2(\omega)$
 	be a regular solution to
 	\eqref{wform}. 
 	Then there exist an open ball
 	${\mathcal O}(\bar { u})$ 
 	of $\bar{ u }$ in $L^2(\omega)$, an open ball
 	${\mathcal O}({\bar \Psi})$ of
 	$\bar \Psi$ in $\bV$, and a mapping $G$ from
 	${\mathcal O}(\bar u)$ to ${\mathcal O}(\bar\Psi)$ of class $C^\infty$,
 	such that, for all $u\in {\mathcal O}(\bar u)$, $\Psi_{u}=G({u})$  is the unique solution in ${\mathcal
 		O}(\bar\Psi)$ to  $\eqref{of}$. {}{Thus, $G'(u)= ({\mathcal A} + {\mathcal B}'(\Psi_{u}))^{-1}$ is uniformly bounded from a smaller ball into a smaller ball} {}$($these smaller balls are still denoted by $ {\mathcal O}(\bar u)$ and ${\mathcal O}(\bar\Psi)$ for notational simplicity$)$. 
 	Moreover,  if
 	$ G'({u}){v} =: \mathbf{z}_{{v}} \in
 	\bV$ and 
 	$ G''({u}) {v}^2 =: \mathbf{w} \in
 	\bV$, then $\mathbf{z}_{v}$ and $\mathbf{w}$ satisfy 
 	\begin{eqnarray}
 	{\mathcal A}\mathbf{z}_{{v}} +
 	{\mathcal B}'(\Psi_{u})\mathbf{z}_{{v}} =
 	{\mathbf C}\mathbf{v}\quad \mathrm{in\ } \bV',
 	\; 
 	{\mathcal A}{\mathbf{w}} + {\mathcal B}'(\Psi_{u}){\mathbf{w}} +{\mathcal B}''({\mathbf{z}}_v,{\mathbf{z}}_v)
 	= 0\quad \mathrm{in\ } \bV', \label{E2.7}
 	\end{eqnarray}
 where ${\mathcal A} + {\mathcal B}'(\Psi_{u})$ is an isomorphism from $\bV$
 	into ${\bV}'$ for all $u\in {\mathcal O}(\bar u)$. Moreover,
 		$\|{\mathcal A} + {\mathcal B}'(\Psi_{u})\|_{{\mathcal L}(\bV, \bV')}$ and $\|({\mathcal A} + {\mathcal B}'(\Psi_{u}))^{-1}\|_{{\mathcal L}(\bV', \bV)}$ are uniformly bounded. Also, $  \|\mathbf{z}_{{v}}\|_{2}\le \| G'({u}) \|_{{\mathcal L}(L^2(\omega), H^2(\Omega))}\|v\|_{L^2(\omega)}. $ \qed
 \end{thm}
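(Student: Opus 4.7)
The plan is to view this as a direct application of the implicit function theorem in Banach spaces. Define the map $\mathcal{F}:\bV\times L^2(\omega)\to\bV'$ by
\[
\mathcal{F}(\Psi,u) \,:=\, \mathcal{A}\Psi + \mathcal{B}(\Psi) - \mathbf{F} - \mathbf{C}\mathbf{u},
\]
so that the state equation \eqref{of} reads $\mathcal{F}(\Psi,u)=0$. By hypothesis $\mathcal{F}(\bar\Psi,\bar u)=0$. First I would verify the smoothness: $\mathcal{A}$ is bounded linear, $u\mapsto \mathbf{C}\mathbf{u}$ is bounded linear, and $\Psi\mapsto \mathcal{B}(\Psi)$ is a continuous homogeneous quadratic polynomial in $\Psi$ (since $B$ is trilinear and bounded on $\bV$), so $\mathcal{F}$ is of class $C^\infty$. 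The partial Fr\'echet derivatives come out, using the definitions of $\mathcal{B}'$ and $\mathcal{B}''$ given before the theorem, as
\[
D_\Psi\mathcal{F}(\Psi,u) = \mathcal{A}+\mathcal{B}'(\Psi),\qquad D_u\mathcal{F}(\Psi,u)\mathbf{v} = -\mathbf{C}\mathbf{v}.
\]

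The crucial step is to use the regularity assumption: by definition of a regular solution, the linearisation $\mathcal{A}+\mathcal{B}'(\bar\Psi)$ is an isomorphism from $\bV$ onto $\bV'$. The implicit function theorem then provides open balls $\mathcal{O}(\bar u)\subset L^2(\omega)$ and $\mathcal{O}(\bar\Psi)\subset\bV$ and a $C^\infty$ map $G:\mathcal{O}(\bar u)\to\mathcal{O}(\bar\Psi)$ such that $\Psi_u=G(u)$ is, for every $u\in\mathcal{O}(\bar u)$, the unique solution in $\mathcal{O}(\bar\Psi)$ of $\mathcal{F}(\Psi,u)=0$, i.e.\ of \eqref{of}.

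To identify the derivatives, I would differentiate the identity $\mathcal{F}(G(u),u)=0$ in $u$. The chain rule gives
\[
(\mathcal{A}+\mathcal{B}'(\Psi_u))\,G'(u)\mathbf{v} \,=\, \mathbf{C}\mathbf{v},
\]
which is exactly the first equation in \eqref{E2.7} with $\mathbf{z}_v=G'(u)\mathbf{v}$. Differentiating once more in $u$ in the direction $\mathbf{v}$ and using that $D_u\mathcal{F}$ is independent of $\Psi$ (so its mixed derivative vanishes) together with $D_\Psi^2\mathcal{F}=\mathcal{B}''$ yields
\[
(\mathcal{A}+\mathcal{B}'(\Psi_u))\,G''(u)\mathbf{v}^2 + \mathcal{B}''(\mathbf{z}_v,\mathbf{z}_v) = 0,
\]
which is the second equation in \eqref{E2.7} with $\mathbf{w}=G''(u)\mathbf{v}^2$.

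It remains to establish the uniform bounds on $\mathcal{A}+\mathcal{B}'(\Psi_u)$ and its inverse. The map $\Psi\mapsto \mathcal{A}+\mathcal{B}'(\Psi)$ is affine and continuous from $\bV$ into $\mathcal{L}(\bV,\bV')$ by boundedness of $B$, and the set of isomorphisms is open in $\mathcal{L}(\bV,\bV')$ with $T\mapsto T^{-1}$ continuous on it. Hence, by shrinking $\mathcal{O}(\bar\Psi)$ if necessary (and, via continuity of $G$, also $\mathcal{O}(\bar u)$), both $\|\mathcal{A}+\mathcal{B}'(\Psi_u)\|_{\mathcal{L}(\bV,\bV')}$ and $\|(\mathcal{A}+\mathcal{B}'(\Psi_u))^{-1}\|_{\mathcal{L}(\bV',\bV)}$ stay uniformly bounded. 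The estimate $\|\mathbf{z}_v\|_2\le\|G'(u)\|_{\mathcal{L}(L^2(\omega),H^2(\Omega))}\|v\|_{L^2(\omega)}$ is then immediate from $\mathbf{z}_v=G'(u)\mathbf{v}$ together with $\|\mathbf{C}\mathbf{v}\|_{\bV'}\lesssim \|v\|_{L^2(\omega)}$. The only conceptual obstacle is the uniform invertibility step, but this reduces to the standard openness of the set of isomorphisms once the regularity hypothesis at $\bar\Psi$ is in hand.
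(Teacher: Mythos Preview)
Your argument is correct and is the standard route to this result. Note, however, that the paper does not actually give a proof of this theorem: it is stated with a citation to \cite{cmj} and closed with \qed, so there is no in-paper proof to compare against. Your implicit-function-theorem derivation---smoothness of $\mathcal{F}(\Psi,u)=\mathcal{A}\Psi+\mathcal{B}(\Psi)-\mathbf{F}-\mathbf{C}\mathbf{u}$, invertibility of $D_\Psi\mathcal{F}(\bar\Psi,\bar u)=\mathcal{A}+\mathcal{B}'(\bar\Psi)$ from the regularity hypothesis, and the identification of $G'$ and $G''$ by differentiating $\mathcal{F}(G(u),u)=0$---is precisely the argument one expects in the cited reference, and your handling of the uniform bounds via openness of the set of isomorphisms and continuity of $T\mapsto T^{-1}$ is the right way to close the loop.
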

\begin{rem}
  	{}{The dependence of $\Psi$ with respect to $u$ is made explicit with the notation $\Psi_{u}$ only when it is necessary.}{}
  \end{rem}
\begin{rem}
 \noindent The regular solution $\bPsi$  to \eqref{wform} satisfies the inf-sup condition
  \begin{align}\label{inf-sup_apost_psi}
  0<\beta:=\inf_{\substack{\bxi\in \bV\\ \trinl\bxi\trinr_2=1}}\sup_{\substack{\Phi\in \bV\\ \trinl\Phi\trinr_2=1}}\langle  {\mathcal A}{\boldsymbol \xi}+{\mathcal B}'(\bPsi) {\boldsymbol \xi},\Phi\rangle, \text{ and this leads to }  \|(\mathcal{A}+\mathcal{B}'({\bPsi}))^{-1}\|_{\mathcal{L}(\bV',\bV)} = 1/\beta.
  \end{align}
  \end{rem}
\noindent 
The existence of a solution to \eqref{wform} can be obtained using standard arguments of considering a minimizing sequence, which is bounded in $\bV \times L^2(\omega)$, and passing to the limit \cite{Lions1, hou, fredi2010}.
 {}{
 	\begin{lem}[\it{a priori bounds, regularity and convergence}]\cite[Lemmas 2.7 {,2.9} \& 2.10]{ngr}\cite[Theorem 2.1]{CN2020}\label{ap}\noindent
 		\begin{itemize}
 			\item[(a)]For $f\in H^{-1}(\Omega)$ and $u \in L^2(\omega)$, the solution $\Psi$ of \eqref{wforma}-\eqref{wformb} belongs to $\bV \cap {\bH}^{2 + \gamma} (\Omega)$,  $\gamma \in (1/2,1]$, and  satisfies the $a~priori$ bounds
 			$
 			\trinl\Psi\trinr_2  \lesssim (\|f\|_{-1} + \|u\|_{L^2(\omega)}), \; 
 			\trinl\Psi\trinr_{2+\gamma}   \lesssim  (\|f\|^3_{-1}+ \|u\|^3_{L^2(\omega)} +
 			\|f\|_{-1} + \|u\|_{L^2(\omega)}). 
 			$
 			\item[(b)] The solution ${\bf z}_{ v }$ of the linearized problem \eqref{E2.7} also belongs to $\bV \cap{\bH}^{2 + \gamma}(\Omega)$, and  satisfies the $a~priori$ bound
 			$\trinl  \mathbf{z}_v \trinr_{2 + \gamma} \lesssim \|v\|_{L^2(\omega)}.$
 			\item[(c)]
 			Let $({\bar \Psi}, {\bar u})$ be a regular solution to \eqref{wform} and  $({u_k})_k$
 			be a sequence in ${\mathcal O}(\bar u)$ weakly converging to
 			$\bar u$ in $L^2(\omega)$. Let $\Psi_{u_k}$ be the
 			solution to \eqref{of} in ${\mathcal O}(\bar\Psi)$
 			that corresponds to $u_k$.  Then,  $(\Psi_{u_k})_k$ converges to $\bar \Psi$ in
 			$\bV$. \qed
 		\end{itemize} 
 	\end{lem}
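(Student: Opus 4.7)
\textbf{Plan of proof for Lemma \ref{ap}.}

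For part (a), the plan is to first obtain the $\bH^2$ bound by energy testing and then bootstrap to the $\bH^{2+\gamma}$ bound by elliptic regularity on polygonal domains. Test \eqref{wforma} with $\varphi_1=\psi_1$ and \eqref{wformb} with $\varphi_2=\psi_2$. The key algebraic observation is that from the definition $b(\varphi_1,\varphi_2,\varphi_3)=-\tfrac12\int_\Omega[\varphi_1,\varphi_2]\varphi_3\,\dx$ and the symmetry of the von K\'{a}rm\'{a}n bracket in its two arguments, one has $b(\psi_2,\psi_1,\psi_1)=b(\psi_1,\psi_2,\psi_1)$; integration by parts further yields $b(\psi_1,\psi_2,\psi_1)=b(\psi_1,\psi_1,\psi_2)$. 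Eliminating the trilinear terms using \eqref{wformb} leads to $\|\psi_1\|_2^2+2\|\psi_2\|_2^2\lesssim(\|f\|_{-1}+\|u\|_{L^2(\omega)})\|\psi_1\|_2$, which gives $\trinl\Psi\trinr_2\lesssim\|f\|_{-1}+\|u\|_{L^2(\omega)}$. For the higher regularity, view \eqref{state}--\eqref{state3} as two biharmonic Dirichlet problems with right-hand sides $f+\mathcal{C}u+[\psi_1,\psi_2]$ and $-\tfrac12[\psi_1,\psi_1]$ respectively. Since $\psi_i\in H^2_0(\Omega)\hookrightarrow W^{1,p}(\Omega)$ for all $p<\infty$, the brackets lie in a negative Sobolev space of order compatible with the Blum--Rannacher regularity shift for $\Delta^2$ on polygonal domains, which yields the $\bH^{2+\gamma}$-bound; the cubic dependence on the data arises from estimating $\|[\psi_1,\psi_2]\|$ by $\trinl\Psi\trinr_2^2$ and using the already-established $\bH^2$-bound.

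For part (b), rewrite the linearized equation \eqref{E2.7} as $\Delta^2 z_{v,1}=2[\psi_1,z_{v,2}]+2[\psi_2,z_{v,1}]+\mathcal{C}v$ and $\Delta^2 z_{v,2}=-2[\psi_1,z_{v,1}]$ with homogeneous clamped boundary conditions. The well-posedness of $\mathcal{A}+\mathcal{B}'(\Psi_u)$ guaranteed by Theorem \ref{th2.5} yields $\trinl\mathbf{z}_v\trinr_2\lesssim\|v\|_{L^2(\omega)}$. Reapplying the Blum--Rannacher elliptic regularity to each component with the bracket terms treated as lower-order source terms (exactly as in part (a), using $\Psi\in\bH^{2+\gamma}$ and $\mathbf{z}_v\in\bH^2$) upgrades the regularity to $\bH^{2+\gamma}$ with the stated bound.

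For part (c), start from the uniform a priori bound of part (a) applied to the sequence $\{u_k\}$: since $(u_k)$ is bounded in $L^2(\omega)$, $(\Psi_{u_k})$ is bounded in $\bV$. Extract a subsequence (not relabelled) converging weakly to some $\Psi^\ast$ in $\bV$, and, by the compact embedding $H^2_0(\Omega)\hookrightarrow W^{1,p}(\Omega)$ for every $p<\infty$, strongly in $\bW^{1,p}(\Omega)$. The linear terms pass to the limit by weak convergence; for the nonlinear term $b(\psi_{1,k},\psi_{2,k},\varphi_1)=-\tfrac12\int_\Omega[\psi_{1,k},\psi_{2,k}]\varphi_1\,\dx$, integrate by parts to shift one derivative off the second factor, producing an integrand of the form (second derivative of $\psi_{1,k}$)$\times$(first derivative of $\psi_{2,k}$)$\times$(first derivative of $\varphi_1$), in which the weakly convergent factor pairs with strongly convergent ones; this permits passage to the limit, so $\Psi^\ast$ solves \eqref{wforma}--\eqref{wformb} with data $\bar u$. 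Since the solution in $\mathcal{O}(\bar\Psi)$ is unique by Theorem \ref{th2.5} and $\Psi_{u_k}\in\mathcal{O}(\bar\Psi)$, we identify $\Psi^\ast=\bar\Psi$; a standard subsequence principle upgrades convergence of the whole sequence. Finally, strong convergence in $\bV$ is obtained by testing the difference equations with $\Psi_{u_k}-\bar\Psi$, using coercivity of $a$ and strong convergence of lower-order terms.

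The main obstacle is the bootstrap for $\bH^{2+\gamma}$ regularity on a polygonal domain: one must carefully check that the brackets $[\psi_1,\psi_2]$ and $[\psi_1,\psi_1]$ belong to the correct negative Sobolev space matched to the shift theorem, and track the cubic nonlinear growth to obtain the stated bound. Passing to the limit in the trilinear form in (c) is routine once one integrates by parts to expose a weak-strong pairing; the verification that the limit lies in the distinguished neighbourhood $\mathcal{O}(\bar\Psi)$ (so that uniqueness applies) is the other subtle point.
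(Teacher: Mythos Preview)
The paper does not prove this lemma; it is stated with citations to \cite{ngr} and \cite{CN2020} and closed with \qed. Your plan is a faithful reconstruction of the standard arguments behind those cited results, and each part is essentially correct.

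Two minor remarks. In part~(a), your energy cancellation is right and leads, after the Blum--Rannacher shift with $\|[\psi_1,\psi_2]\|_{-1}\lesssim\trinl\Psi\trinr_2^2$, to a bound that is \emph{quadratic} in the data, not cubic; since $x^2\le x+x^3$ for all $x\ge 0$, your estimate is sharper and certainly implies the stated one, so there is no gap. In part~(b), the factors of $2$ in your rewritten linearized system are off: the $\tfrac12$ in the definition of $b(\bullet,\bullet,\bullet)$ cancels the $2$ in $\mathcal{B}'(\Psi)\boldsymbol{\xi}=2B(\Psi,\boldsymbol{\xi},\bullet)$, so the strong form reads $\Delta^2 z_{v,1}=[\psi_1,z_{v,2}]+[\psi_2,z_{v,1}]+\mathcal{C}v$ and $\Delta^2 z_{v,2}=-[\psi_1,z_{v,1}]$. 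This is cosmetic and does not affect your bootstrap. In part~(c), the subtle point you flag about $\Psi^\ast\in\mathcal{O}(\bar\Psi)$ is handled by weak lower semicontinuity of the norm (giving $\Psi^\ast\in\overline{\mathcal{O}(\bar\Psi)}$) together with taking the neighbourhood in Theorem~\ref{th2.5} small enough that the local uniqueness from the implicit function theorem persists on the closure.
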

\noindent {\it Local solutions} $(\bar \Psi, \bar u)$ to \eqref{wform}  such that the pair is a {\it regular solution} to \eqref{of} are approximated in this article. The {\it optimality system} for the optimal control problem \eqref{wform} is:
 	\begin{subequations} \label{opt_con}
 		\begin{align}
 		&   {A}({\bar \Psi},\Phi)+{B}({\bar \Psi},{\bar \Psi},\Phi)=(\mbox{\bf F} + {\bf C}{\bar{\bf u}}, \Phi) \fl \Phi \in \bV \; \;  \rm{( State \; equations)}\label{state_eq}
 		\\  
 		& {A}(\Phi, {\bar\Theta})+2{B}({\bar \Psi},\Phi,{\bar \Theta})=(\bar \Psi- \Psi_d, \Phi) \fl \Phi \in \bV \; \; \rm{(Adjoint \; equations)}\label{adj_eq}
 		\\
 		&  \left({\mathbf C}^* {\bar \Theta} + \alpha {\bf \bar u} ,  {\bf u} - {\bf \bar u} \right)_{{\boldsymbol{L}}^2(\omega)} \ge 0  \fl  {\bf u} =(u,0)^T, \; {u \in U_{ad}} \; \;  \rm{(First \: order \: optimality \: condition)} \label{opt3}
 		\end{align}
 	\end{subequations}
 	where ${\bar \Theta}$ is the adjoint state and $\mathbf{C}^{*}$ denotes the adjoint of $\mathbf{C}$. For almost all $x \in \Omega$, the optimal control ${\bf \bar u}(x):=({\bar u}(x),0)$ in \eqref{opt3} satisfies
 	\begin{equation}\label{rep}
 {{ \bf \bar u}(x)= \Pi_{[u_a, u_b]} \left( - \frac{1}{\alpha} ({\mathbf C}^* {\bar \Theta} ) \right),}
 	\end{equation}
 	where {{$\bar \Theta=( \bar\theta_{1},\bar\theta_{2} )$ and }}the projection operator $\Pi_{[u_{a},u_{b}]}$ is defined by $\Pi_{[u_{a},u_{b}]}(g):= \min\{u_{b}, \max \{u_{a},g\} \}.$

\subsection {Discrete formulation}

Let $\T$ be an admissible and regular triangulation of the domain $\Omega$ into simplices in $\mathbb R^2$, $h_T$ be the diameter of  $T \in \T$ and $h:=\max_{T \in \T} h_T$. For a non-negative integer $k \in \N_0$, ${\mathcal P}_{k}(\mathcal{T})$ denotes the space of piece-wise polynomials of degree at most equal to $k$. Let $\Pi_k$ denote the $L^2$ projection onto the space of piece-wise polynomials ${\mathcal P}_k(\mathcal{T})$. The oscillation of $f$ in $\cT$ reads ${\rm{osc}}_k(f,\cT)=\|h_{}^2(f-\Pi_k f)\|$ for $k \in \N_0$. 

 	
 	\smallskip
 	
 	
 	\smallskip
 	\noindent
 	The nonconforming {\it Morley element space} $V_\M$  is defined by
 	\begin{align*}
 	V_\M& :=\{ v_\M\in {\mathcal P}_2(\T)|
 	 v_\M \text{ is continuous at the interior vertices}  \text{ and vanishes at the vertices } 
 	 \text{ of }\partial \Omega; \:
 	D_{\NC}{ v_\M} \\
 	&\text{ is continuous at the midpoints of interior edges and }
 	\text{vanishes at the  midpoints of the edges of } \partial \Omega
 	\},
 	\end{align*}
 	and  is equipped with the norm $\|\bullet\|_{\NC}$ defined by  $ \|\varphi\|_{\NC}= \bigg(\sum_{T\in \mathcal{T}}\|D^2_\NC\varphi\|_{L^2(T)}^2\bigg)^{1/2}$.  Here $D_{\NC} \bullet$ and  $D^2_\NC \bullet$ denote the piecewise gradient and Hessian of the arguments on triangles  $T \in \mathcal{T}$. 
 	For $\varphi \in V$,  $\|\varphi\|_2=\|\varphi\|_\NC$ and thus $\| \bullet \|_\NC$ denotes the norm in $V+V_\M$.  Let  $\bV_\M:= V_\M \times V_\M$ and for $\Phi=(\varphi_1,\varphi_2) \in \bV_\M, \; \trinl \Phi \trinr_\NC^2:=\|\varphi_1\|_{\NC}^2+\|\varphi_2\|_{\NC}^2.$
 For a non-negative integer $m$, and $\Phi =(\varphi_1,\varphi_2) \in W^{m,p}(\Omega;\T)$,  where $ W^{m,p}(\Omega;\T)$ denotes the broken Sobolev space with respect to $\T$, $\trinl \Phi \trinr_{m,p,h}^2:=|\varphi_1|_{m,p,h}^2+|\varphi_2|_{m,p,h}^2$,  and $|\varphi_i|_{m,p,h}=(\sum_{T\in \mathcal{T}}|\varphi_i|^{p}_{m,p,T})^{1/p}$, $i=1,2$; with $|\cdot|_{m,p,T}$ denoting the usual semi-norm in $W^{m,p}(T)$. When $p=2$, the notation is abbreviated as $|\cdot|_{m,h}$ and $\trinl\cdot\trinr_{m,h}$.
 
 
 \medskip

 \noindent For all $ \eta_\M,\chi_\M \; \text{and} \; \varphi_\M\in V_ {\M}$, define the discrete bilinear and trilinear forms by
 $$ a_\NC(\eta_\M,\chi_\M):=\sit D^2 \eta_\M:D^2\chi_\M \dx \mbox{ and }  b_\NC(\eta_\M,\chi_\M,\varphi_\M):=-\half\sit [\eta_\M,\chi_\M]\varphi_\M \dx.$$ 
 
 \noindent Similarly, for $ \Xi_\M=(\xi_{1},\xi_{2}),$ 
 $\Theta_\M=(\theta_{1},\theta_{2})$, $ \Phi_\M=(\varphi_{1},\varphi_{2})\in  \bV_\M$, define
 \begin{align*}
 & A_\NC(\Theta_\M,\Phi_\M):=a_\NC(\theta_1,\varphi_1)+a_\NC(\theta_2,\varphi_2), \; F_\NC(\Phi_\M):=\sit f\varphi_1\dx  \; \; \text{and}\\
 &B_\NC(\Xi_\M,\Theta_\M,\Phi_\M):=b_\NC(\xi_{1},\theta_{2},\varphi_{1})+b_\NC(\xi_{2},\theta_{1},\varphi_{1})-b_\NC(\xi_{1},\theta_{1},\varphi_{2}). 
 \end{align*} 
 The above definitions of the bilinear and trilinear forms are meaningful for functions in $V+V_{\M}$ 
 (resp. ${\bf V} + {\bf V}_{\M}$). {{Note that for all ${ \xi},{ \theta},\varphi\in V$, $a_\NC(\xi,\theta)=a(\xi,\theta)$ and $b_\NC(\xi,\theta,\varphi)=b(\xi,\theta,\varphi)$.}}
 \medskip

\noindent  Analogous to the definition of nonlinear operator $\mathcal{B}:\bV\rightarrow\bV'$, define the discrete counterparts $\mathcal{B}_{\NC}:\bV+\bV_\M\rightarrow  (\bV+\bV_\M)'$
 as $
 \langle\mathcal{B}_{\NC}(\Psi),\Phi\rangle =B_\NC(\Psi,\Psi,\Phi) \fl \Psi, \Phi \in \bV+\bV_\M.
 $
 The  Fr\'echet derivative of $\mathcal{B}_\NC$ around $\Psi$ at the direction of ${\boldsymbol \xi}$ is denoted by $\mathcal{B}_\NC'(\Psi)({\boldsymbol \xi})$ and is
 \begin{equation}\label{B.derivative}
 \langle\mathcal{B}_{\NC}'(\Psi)({\boldsymbol \xi}),\Phi\rangle =2B_\NC(\Psi,{\boldsymbol \xi},\Phi)\quad  \fl \Psi, \Phi,{\boldsymbol \xi} \in \bV+\bV_\M.
 \end{equation}
 \noindent	The {\it admissible space for discrete controls} is 
$ U_{h, ad}:=\left\{{{u}}\in L^2(\omega):  u|_T\in \mathcal{P}_0(T), \; u_a \leq  u \leq
 u_b \mbox{ for all }T\in
 \mathcal{T} \right\}.$
The discrete control problem associated with \eqref{wform} reads
 \begin{subequations}\label{discrete_cost}
 	\begin{align} 
 	&  \min_{(\Psi_\M, {u}_h) \in  \bV_\M \times  U_{h,ad}} \cJ(\Psi_\M, u_h) \,\,\, \textrm{ subject to } \\
 	&  {A}_\NC(\Psi_\M,\Phi_\M)+{B}_\NC(\Psi_\M,\Psi_\M,\Phi_\M)=({\bf F} + {\mathbf C}{\bf u}_h, \Phi_\M) \fl \Phi_\M\in \bV_\M. \label{dis_cost_b}
 	\end{align}
 \end{subequations}
 The {\it discrete first order optimality system} that comprises  of the discrete state and adjoint equations and the first order optimality condition corresponding  to \eqref{discrete_cost} is
 \begin{subequations}\label{discrete.opt}
 	\begin{align}
 	&  A_\NC({\bar \Psi_\M},\Phi_\M)+B_\NC({\bar \Psi_\M}, {\bar \Psi_\M},\Phi_\M)=({\bf F} +  {\mathbf C}{{\bar{\bf u}_h}}, \Phi_\M) \fl \Phi_\M \in \bV_\M   \label{state_eq1}
 	\\  
 	& A_\NC(\Phi_\M, {\bar\Theta_\M})+2B_\NC({\bar \Psi_\M}, \Phi_\M, {\bar \Theta_\M})=(\bar \Psi_\M- \Psi_d, \Phi_\M) \fl \Phi_\M \in \bV_\M \label{adj_eq1}
 	\\
 	&  \left( {\mathbf C}^* {\bar \Theta_\M} + \alpha {{\bf \bar u}_h} ,  {{\bf u}_h} - {{\bf \bar u}_h} \right)_{} \ge 0  \fl {\bf u}_h =(u_h,0)^T, \; {u_h \in U_{h,ad}},\label{opt31}
 	\end{align}
 \end{subequations}
 where $\bar \Theta_\M \in \bV_\M $ denotes the discrete adjoint variable that corresponds to the optimal state variable ${\bar \Psi_\M} \in \bV_\M$.
  \subsection{ Auxiliary results}\label{sec.discreteformulation}
 This section presents some auxiliary results that are useful to establish the proof of both the {\it a priori} as well as {\it a posteriori} error estimates. These are very crucial properties of Morley interpolation and companion operators that aids the analysis. Some properties of the discrete bilinear and trilinear forms that will be used throughout in the article are also presented.
\begin{lem}[\it Morley interpolation operator] \label{Morley_Interpolation} 
	\cite{CCDGJH14, DG_Morley_Eigen, CCP} 
	For $v \in V $, the Morley interpolation operator 
	$I_{\rm M}: V \rightarrow V_{\rm M}$ defined by
	$
	(I_{\rm M} v)(z)=v(z) \text{ for any } \text{ vertex $z$ of } \T \text{ and } 
	\int_E\frac{\partial I_{\rm M} v}{\partial \nu_E}\ds=\int_E\frac{\partial v}{\partial \nu_E}\ds \text{ for any  edge } E  \text{ of } \T $
	satisfies $(a)$ the integral mean property $D^2_{\rm NC} I_{\rm M} =\Pi_0 D_{\rm NC}^2$ of the Hessian, $(b)  \sum_{m=0}^1h^{m-2}|(1-I_{\rm M}) v|_{H^m(T)} \leq C_I |(1-I_{\rm M}) v|_{H^2(T)} =C_I(\| D^2v\|_{L^2(T)}-\| D^2I_{\rm M} v\|_{L^2(T)})$ for all $v \in H^2(T)$ and $T \in \cT$, \text{ and }  
	$(c)\|(1-I_{\rm M})v\|_{\rm NC}\lesssim h^{\gamma}||v\|_{2+\gamma}$ for all $v \in V \cap H^{2+\gamma}(\O) $.

\end{lem}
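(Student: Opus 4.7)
The plan is to prove the three claims in turn, each first on a single triangle $T \in \cT$ and then assembled over the mesh. The common starting point is that $I_{\rm M} v|_T$ is the unique element of $\mathcal{P}_2(T)$ matching the six Morley degrees of freedom---three vertex values and three mean normal derivatives on the edges of $T$.

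For part (a), since $D^2 I_{\rm M} v|_T$ is a constant symmetric matrix, it coincides with $\Pi_0 D^2_{\rm NC} v$ on $T$ exactly when $\int_T D^2(v - I_{\rm M}v)\dx = 0$ entry-wise. I would write each entry as $\int_T \partial^2_{ij} v \dx = \int_{\partial T} \partial_j v \, \nu_i \ds$ via integration by parts, and decompose $\nabla v = (\partial_{\nu_E} v)\nu_E + (\partial_{\tau_E} v)\tau_E$ on each edge $E \subset \partial T$. The normal contribution is $\int_E \partial_{\nu_E} v \ds$, which is preserved by $I_{\rm M}$ by the defining edge condition, while the tangential contribution is $\int_E \partial_{\tau_E} v \ds$, which equals the difference of vertex values of $v$ at the endpoints of $E$ and is therefore also preserved. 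Summing over the three edges of $T$ establishes (a).

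For part (b), set $w := (1 - I_{\rm M})v$ and observe that $w$ vanishes at every vertex of $T$ and has zero edge-mean normal derivative on every edge of $T$. On the reference triangle $\widehat T$, a standard compactness argument yields $\|\hat w\|_{L^2(\widehat T)} + |\hat w|_{H^1(\widehat T)} \lesssim |\hat w|_{H^2(\widehat T)}$ for all $\hat w$ annihilating the six Morley functionals: the limit of any minimising sequence lies in $\mathcal{P}_1(\widehat T)$, and its vanishing vertex values force it to be identically zero. A shape-regular affine pull-back to $T$ turns this into the claimed scaled estimate with powers of $h_T$. The second identity in (b) is a Pythagorean consequence of (a): since $D^2 I_{\rm M}v$ is constant on $T$, the cross term $\int_T D^2(v - I_{\rm M}v) : D^2 I_{\rm M}v \dx$ factors as $D^2 I_{\rm M}v : \int_T D^2(v - I_{\rm M}v) \dx$, which vanishes by (a); hence $\|D^2 v\|_{L^2(T)}^2 = \|D^2(v - I_{\rm M}v)\|_{L^2(T)}^2 + \|D^2 I_{\rm M}v\|_{L^2(T)}^2$, as required.

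For part (c), I would invoke a fractional Bramble--Hilbert estimate. Locally, $I_{\rm M}$ reproduces $\mathcal{P}_2(T)$ because the Morley degrees of freedom are unisolvent on quadratics, so $(I - I_{\rm M})$ annihilates $\mathcal{P}_2(T)$. Interpolation between the integer-order bounds $|v - I_{\rm M}v|_{H^2(T)} \lesssim \|v\|_{H^2(T)}$ and $|v - I_{\rm M}v|_{H^2(T)} \lesssim h_T |v|_{H^3(T)}$ then yields $|v - I_{\rm M}v|_{H^2(T)} \lesssim h_T^\gamma \|v\|_{H^{2+\gamma}(T)}$ for any $\gamma \in (1/2,1]$. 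Squaring, summing over $T \in \cT$ and using $h_T \le h$ delivers (c). The main subtlety I anticipate is the boundary-integral bookkeeping in (a): orienting each edge consistently and keeping track of signs when translating Cartesian partial derivatives into $(\partial_{\nu_E}, \partial_{\tau_E})$ is where errors typically creep in. Once (a) is secure, (b) and (c) are essentially standard reference-element and Bramble--Hilbert arguments.
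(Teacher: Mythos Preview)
The paper does not supply its own proof of this lemma; it simply cites the references \cite{CCDGJH14, DG_Morley_Eigen, CCP}. Your proposal is correct and follows exactly the standard route found in those sources: integration by parts together with the defining degrees of freedom for (a), a reference-element compactness argument with affine scaling plus the Pythagorean orthogonality for (b), and a fractional Bramble--Hilbert/interpolation argument for (c).

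One small remark on (b): the identity as printed in the statement, $|(1-I_{\rm M})v|_{H^2(T)} = \|D^2 v\|_{L^2(T)} - \|D^2 I_{\rm M}v\|_{L^2(T)}$, is a typo---the relation you correctly derive is the squared version $\|D^2 v\|_{L^2(T)}^2 = \|D^2(v-I_{\rm M}v)\|_{L^2(T)}^2 + \|D^2 I_{\rm M}v\|_{L^2(T)}^2$, and this is precisely the form the paper actually uses later in the proof of Theorem~\ref{thm.adj.energy}.
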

\begin{lem}[\it Companion operator]\label{hctenrich} \cite{DG_Morley_Eigen, CCP} For any $v_{\rm M} \in 
	V_{{\rm M}}$,  there exists $J:{V}_{{\rm M}}\to V$ such that 
	\begin{align*}
		&(a) \; I_{\rm M} J v_{\rm M}= v_{\rm M} \; \text{ for all } v_{\rm M} \in V_{{\rm M}}, \quad  
    (b)  \; \Pi_0((1 - J)  v_{\rm M}) =0, \quad 
	(c) \; \Pi_0D_{\rm NC}^2((1-J) v_{\rm M}) =0, \\
	& (d)\;  \| h_{T}^{-2}((1-J)v_{\rm M})\| + \| h_{T}^{-1}D_{\rm NC}((1-J)v_{\rm M})\| + \| D^2_{\rm NC}((1-J)v_{\rm M})\| \le \Lambda_\jc \min_{v \in V}\| D_{\rm NC}^2(v_{\rm M}-v)\|, \\
& (e)   
\sum_{m=0}^{2}h_{T}^{2m-4}\|(1-J)v_{\rm M}\|^2_{H^m(T)} \leq C_J^2 \sum_{E \in \mathcal{E}(\Omega(T))}
	h_E \| [D_{\rm NC}^2v_{\rm M} ]_E  \tau_E\|^2_{L^2(E)}  \lesssim \min_{v \in V} \|D^2_{\rm NC}(v_{\rm M} -v)\|^2_{L^2(\Omega(T))}.
	\end{align*}
	Here ${\cal N}(T)$  denotes the set of vertices of  $T \in \T$  and patch 
	$\Omega(T):=\text { int } \left(\cup_{z \in {\cal N}(T)} \cup \T(z)\right)$, $\T(z)$ denotes the triangles that share the vertex $z$ and $\mathcal{E}(\Omega(T))$ denotes the edges in $\Omega(T)$,
	$\tau_E$ denotes the unit tangential vector to the edge $E$ and $[\phi]_E$ denotes the jump of a function $\phi$ across the edge $E$. \
\end{lem}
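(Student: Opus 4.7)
The plan is to construct $J$ in two stages: first define a preliminary Hsieh–Clough–Tocher (HCT) macro-element enrichment $J_1 \colon V_{\rm M} \to V$ built from averaged nodal data of $v_{\rm M}$, then correct it by locally supported $H^2_0(T)$ bubble functions so that the integral-mean properties (b) and (c) hold without disturbing (a).

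First I would set $J_1 v_{\rm M}$ to be the reduced HCT function whose $C^1$ nodal data are: at each interior vertex $z$, the value $v_{\rm M}(z)$ (well-defined by continuity of $v_{\rm M}$ at vertices) and the gradient $|\cT(z)|^{-1}\sum_{T \in \cT(z)} (\nabla v_{\rm M})|_T(z)$; at each interior edge midpoint, the value of $\partial v_{\rm M}/\partial \nu_E$ (unambiguous by the Morley midpoint condition); and all zero data at boundary vertices and boundary edges. By construction $J_1 v_{\rm M} \in V$, and property (a) holds for $J_1$ because vertex values match and $\partial v_{\rm M}/\partial \nu_E$ is affine on $E$, so its midpoint value equals $|E|^{-1}\int_E \partial v_{\rm M}/\partial \nu_E \, \mathrm{d}s$. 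To recover (b) and (c) I would add, element by element, a quintic bubble $b_T \in H^2_0(T)$ (four-dimensional in $\mathcal{P}_5(T)$, generated from $\lambda_1^2\lambda_2^2\lambda_3^2$ and its multiples by barycentric monomials) solving a $4\times 4$ local linear system that enforces the four scalar integral conditions $\Pi_0\bigl((v_{\rm M}-Jv_{\rm M})|_T\bigr)=0$ and $\Pi_0\bigl(D^2_{\rm NC}(v_{\rm M}-Jv_{\rm M})|_T\bigr)=0$. Since $b_T$ and $\nabla b_T$ vanish on $\partial T$, the correction lies in $\ker I_{\rm M}$, preserves global $H^2$-conformity, and therefore (a) continues to hold.

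For the quantitative bounds (d) and (e), I would write $(1-J)v_{\rm M}=(1-J_1)v_{\rm M}-\sum_T b_T$ and estimate each summand via a scaling argument on the reference triangle. The HCT error $(1-J_1)v_{\rm M}$ on $T$ is controlled by the differences between one-sided and averaged vertex gradients and between one-sided and averaged edge-midpoint normal derivatives; these differences are, up to local scaling, bounded by $h_E^{1/2}\|[D^2_{\rm NC} v_{\rm M}]_E\tau_E\|_{L^2(E)}$ for edges in the patch $\Omega(T)$. The bubble contribution is controlled by its $L^2$-norm on $T$ times an inverse factor, and its coefficients solve a fixed-conditioned $4\times 4$ system whose right-hand side is again a linear functional of the same jumps. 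Summing the $h_T^{2m-4}$-weighted $H^m$-norms over $m=0,1,2$ yields the first inequality in (e) and the three-term estimate in (d). The second inequality in (e), equivalence of the jump seminorm to $\min_{v\in V}\|D^2_{\rm NC}(v_{\rm M}-v)\|_{L^2(\Omega(T))}$, follows from a standard broken Poincaré/Friedrichs argument for piecewise $H^2$ functions modulo $V$ on the patch.

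The main obstacle is dovetailing the three structural properties (a)–(c) with the norm control: the bubbles must simultaneously lie in $\ker I_{\rm M}$, satisfy four orthogonality conditions per element, and have coefficients whose size is bounded only in terms of patch-local jump seminorms with the correct $h_T$-scaling. The delicate point is the last one — showing that the bubble coefficients inherit the jump-based bound rather than merely an $L^2$-based bound of $v_{\rm M}-J_1 v_{\rm M}$ — which is what ultimately gives the sharp reliability/efficiency-friendly estimate in (e).
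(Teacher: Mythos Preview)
The paper does not prove this lemma; it is quoted from the cited references. Your two-stage plan---a reduced HCT averaging $J_1$ followed by local $H^2_0(T)$ bubble corrections---is exactly the construction carried out in those references, so the overall strategy is the right one.

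There is, however, a concrete slip and a redundancy in your sketch. First, the polynomial degree is wrong: there are no nonzero quintic polynomials in $H^2_0(T)$, since any polynomial vanishing together with its gradient on $\partial T$ must be divisible by $\lambda_1^2\lambda_2^2\lambda_3^2$, which already has degree six; your own proposed generator contradicts the word ``quintic.'' Second, property (c) is not an independent constraint once (a) holds: the integral-mean identity $D^2_{\rm NC} I_{\rm M} = \Pi_0 D^2_{\rm NC}$ of Lemma~\ref{Morley_Interpolation}(a) combined with $I_{\rm M} J v_{\rm M} = v_{\rm M}$ gives $\Pi_0 D^2_{\rm NC} J v_{\rm M} = D^2_{\rm NC} I_{\rm M} J v_{\rm M} = D^2_{\rm NC} v_{\rm M}$ immediately. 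Since any $b_T \in H^2_0(T)$ has vanishing Morley degrees of freedom, (a) and hence (c) are preserved under the bubble correction; only the single scalar constraint (b) per element needs to be enforced, and the one-dimensional sextic space $\mathrm{span}\{\lambda_1^2\lambda_2^2\lambda_3^2\}$ suffices (its coefficient is fixed because $\int_T \lambda_1^2\lambda_2^2\lambda_3^2\,{\rm d}x \ne 0$). With these two corrections your scaling and jump-control arguments for (d) and (e) go through as in the cited works.
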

\noindent For vector-valued functions, the interpolation and companion operators are to be understood component-wise.
\begin{lem}[\it Bounds for $A_{\rm NC}(\bullet,\bullet)$]\cite[ Lemmas 4.2, 4.3]{ScbSungZhang}\label{Anc.bound} If $\bchi \in \bH^{2+\gamma}(\Omega)$,  $\Phi \in \bV\cap\bH^{2+\gamma}(\Omega)$ and $\Phi_{\rm M} \in \bV_{\rm M}$, then
$ (a) \; A_{\rm NC}(\bchi,(1-J)\Phi_{\rm M})\lesssim h^\gamma\trinl \bchi \trinr_{2+\gamma}\trinl \Phi_{\rm M} \trinr_{\rm NC}$ $
(b) \; A_{\rm NC}(\bchi,(1-I_{\rm M})\Phi)\lesssim h^{2\gamma}\trinl \bchi \trinr_{2+\gamma}\trinl \Phi \trinr_{2+\gamma} $
and $ (c) \; A_{\rm{NC}}(\Phi_{\rm{M}},\Phi_{\rm{M}}) =  \trinl\Phi_{\rm{M}}\trinr^{2}_{\rm{NC}}.$
\end{lem}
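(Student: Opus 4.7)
The plan rests on two orthogonality relations supplied by the interpolation and companion operators, namely $\Pi_0 D^2_{\NC}((1-J)\Phi_{\rm M})=0$ (companion operator property $(c)$) and $\Pi_0 D^2_{\NC}((1-I_{\rm M})\Phi)=0$, the latter being an immediate consequence of the integral mean property $D^2_{\NC}I_{\rm M}=\Pi_0 D^2_{\NC}$ of the Morley interpolation. These will be combined with the standard Bramble--Hilbert type estimate $\|(1-\Pi_0)D^2\bchi\|_{L^2(T)}\lesssim h_T^\gamma \, |\bchi|_{H^{2+\gamma}(T)}$, valid on each $T\in\cT$ since $\bchi\in\bH^{2+\gamma}(\Omega)$ with $\gamma\in(1/2,1]$.

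Part $(c)$ is a direct unpacking of definitions: for $\Phi_{\rm M}=(\varphi_1,\varphi_2)\in\bV_{\rm M}$, the piecewise identity $a_{\NC}(\varphi_i,\varphi_i)=\sum_{T\in\cT}\|D^2\varphi_i\|_{L^2(T)}^2=\|\varphi_i\|_{\NC}^2$ is immediate, and summing over $i=1,2$ yields $A_{\NC}(\Phi_{\rm M},\Phi_{\rm M})=\trinl\Phi_{\rm M}\trinr_{\NC}^2$. For part $(a)$, I would begin with $A_{\NC}(\bchi,(1-J)\Phi_{\rm M})=\sit D^2\bchi:D^2_{\NC}((1-J)\Phi_{\rm M})\dx$ and exploit the piecewise orthogonality to replace $D^2\bchi$ by $(1-\Pi_0)D^2\bchi$ at no cost. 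A Cauchy--Schwarz inequality then produces the product of $\|(1-\Pi_0)D^2\bchi\|_{L^2(\Omega)}\lesssim h^\gamma \trinl\bchi\trinr_{2+\gamma}$ with $\|D^2_{\NC}((1-J)\Phi_{\rm M})\|_{L^2(\Omega)}\lesssim \trinl\Phi_{\rm M}\trinr_{\NC}$, where the latter is precisely the companion operator stability bound $(d)$.

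Part $(b)$ proceeds along identical lines, but the orthogonality is now extracted from the right factor: the integral mean property gives $D^2_{\NC}((1-I_{\rm M})\Phi)|_T = (1-\Pi_0)D^2\Phi|_T$, so $\|D^2_{\NC}((1-I_{\rm M})\Phi)\|_{L^2(T)}\lesssim h_T^\gamma |\Phi|_{H^{2+\gamma}(T)}$. Inserting $(1-\Pi_0)$ on the $\bchi$-side as in $(a)$ contributes a second $h^\gamma$ factor, and the product of the two Bramble--Hilbert estimates yields the advertised $h^{2\gamma}$ rate. I do not anticipate a genuine obstacle; the only subtlety is bookkeeping the origin of each $h^\gamma$ factor: in $(a)$ the Morley function need not be smoother than $H^2$, so only one $h^\gamma$ is available, whereas in $(b)$ the extra regularity $\Phi\in\bH^{2+\gamma}(\Omega)$ activates orthogonality on both sides and doubles the power of $h$.
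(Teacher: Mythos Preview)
Your proposal is correct. The paper itself does not supply a proof of this lemma: it is stated with a direct citation to \cite[Lemmas 4.2, 4.3]{ScbSungZhang} and used as a black box throughout. Your argument is precisely the standard one underlying that reference, namely exploiting the piecewise orthogonality $\Pi_0 D^2_{\rm NC}((1-J)\Phi_{\rm M})=0$ (Lemma~\ref{hctenrich}$(c)$) and $D^2_{\rm NC}(1-I_{\rm M})\Phi=(1-\Pi_0)D^2\Phi$ (Lemma~\ref{Morley_Interpolation}$(a)$) to insert $(1-\Pi_0)$ on the $\bchi$-factor, and then invoking the fractional approximation estimate for $\Pi_0$ together with the companion stability bound. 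The bookkeeping you describe for the single versus double power of $h^\gamma$ is exactly right.
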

\begin{lem}[\it{Lower bounds for discrete norms}]{\cite[Lemma 4.7]{carstensen2017nonconforming}}\label{staest}
	For all $\Phi \in \bV+\bV_{\rm{M}}$, it holds that
	$(a) \; \trinl\Phi\trinr_{0,\infty}\lesssim \trinl\Phi\trinr_{\rm{NC}}
 \; and \;  (b) \; \trinl\Phi\trinr_{1,2,h}\lesssim\trinl\Phi\trinr_{\rm{NC}}.$
\end{lem}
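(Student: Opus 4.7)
The plan is to split each scalar component of $\Phi \in \bV + \bV_{\rm M}$ as $\varphi = \varphi_c + \varphi_{\rm M}$ with $\varphi_c \in V$ and $\varphi_{\rm M} \in V_{\rm M}$, then handle the conforming part by a Sobolev embedding and the nonconforming part by the companion operator $J$ from Lemma \ref{hctenrich} together with elementwise inverse inequalities. The vector-valued bounds (a) and (b) will follow by applying the scalar estimates componentwise to the two entries of $\Phi$.

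For the conforming portion, rewrite $\varphi = (\varphi_c + J\varphi_{\rm M}) + (1-J)\varphi_{\rm M}$, where $\varphi_c + J\varphi_{\rm M} \in H^2_0(\Omega)$. The triangle inequality together with Lemma \ref{hctenrich}(d) yields
\[
\|J\varphi_{\rm M}\|_2 \le \|(1-J)\varphi_{\rm M}\|_{\rm NC} + \|\varphi_{\rm M}\|_{\rm NC} \lesssim \|\varphi_{\rm M}\|_{\rm NC},
\]
and the two-dimensional Sobolev embedding $H^2_0(\Omega) \hookrightarrow L^\infty(\Omega)$ combined with the Poincaré--Friedrichs inequality on $H^2_0(\Omega)$ then gives $\|\varphi_c + J\varphi_{\rm M}\|_{0,\infty} + |\varphi_c + J\varphi_{\rm M}|_1 \lesssim \|\varphi\|_{\rm NC}$, contributing the conforming share of both (a) and (b).

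For the piecewise-polynomial residual $(1-J)\varphi_{\rm M}$ of uniformly bounded degree on $\cT$, apply the standard shape-regular inverse inequalities $\|w\|_{L^\infty(T)} \lesssim h_T^{-1}\|w\|_{L^2(T)}$ and $|w|_{H^1(T)} \lesssim h_T^{-1}\|w\|_{L^2(T)}$ on each $T \in \cT$, fed with the $L^2$ bound $\|(1-J)\varphi_{\rm M}\|_{L^2(T)} \lesssim h_T^2 \|\varphi_{\rm M}\|_{\rm NC}$ from Lemma \ref{hctenrich}(d). This produces
\[
\|(1-J)\varphi_{\rm M}\|_{L^\infty(T)} \lesssim h_T\|\varphi_{\rm M}\|_{\rm NC}, \qquad |(1-J)\varphi_{\rm M}|_{H^1(T)} \lesssim h_T \|\varphi_{\rm M}\|_{\rm NC};
\]
the $H^1$-seminorm bound summed over $T\in\cT$ is in fact already immediate from Lemma \ref{hctenrich}(d). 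Using $h_T \le \mathrm{diam}(\Omega)$ to absorb the $h_T$ factor, and reassembling with the conforming estimate by triangle inequality, delivers (a) and (b) in the scalar case, and then componentwise in the vector case.

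The delicate step is the $L^\infty$ bound in (a): Lemma \ref{hctenrich}(d) delivers only $L^2$-type information on the residual $(1-J)\varphi_{\rm M}$, so one must promote it to $L^\infty$ via the polynomial inverse inequality. The $h_T^{-1}$ loss incurred is absorbed by the $h_T^2$ improvement of Lemma \ref{hctenrich}(d), leaving a harmless $O(h_T)$ residual that is mesh-independently bounded. The $H^1$-seminorm bound in (b) requires no such promotion and is essentially already encoded in Lemma \ref{hctenrich}(d), so it is considerably easier.
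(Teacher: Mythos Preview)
The paper does not prove this lemma itself; it is quoted from \cite[Lemma~4.7]{carstensen2017nonconforming}, so there is no in-paper proof to compare against. Your overall strategy---split off a conforming piece via the companion operator $J$, apply the Sobolev embedding $H^2_0(\Omega)\hookrightarrow L^\infty(\Omega)\cap H^1_0(\Omega)$ on that piece, and control the remainder $(1-J)\varphi_{\rm M}$ through Lemma~\ref{hctenrich}(d) plus inverse inequalities---is sound and is essentially how the cited result is obtained.

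There is, however, a genuine gap in the way you carry it out. You bound everything in terms of $\|\varphi_{\rm M}\|_{\rm NC}$ (e.g.\ $\|J\varphi_{\rm M}\|_2\lesssim\|\varphi_{\rm M}\|_{\rm NC}$ and $\|(1-J)\varphi_{\rm M}\|_{L^2(T)}\lesssim h_T^2\|\varphi_{\rm M}\|_{\rm NC}$) and then assert that the final result is $\lesssim\|\varphi\|_{\rm NC}$. But the splitting $\varphi=\varphi_c+\varphi_{\rm M}$ is \emph{not} known to be stable: there is no reason why $\|\varphi_c\|_{\rm NC}$ and $\|\varphi_{\rm M}\|_{\rm NC}$ should individually be controlled by $\|\varphi\|_{\rm NC}=\|\varphi_c+\varphi_{\rm M}\|_{\rm NC}$, since large cancellations may occur. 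As written, the passage from $\|\varphi_{\rm M}\|_{\rm NC}$ to $\|\varphi\|_{\rm NC}$ is unjustified.

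The repair is immediate and uses the full strength of Lemma~\ref{hctenrich}(d), namely the minimum over $v\in V$ on the right-hand side. Choosing $v=-\varphi_c$ there gives
\[
\|h_T^{-2}(1-J)\varphi_{\rm M}\|+\|h_T^{-1}D_{\rm NC}(1-J)\varphi_{\rm M}\|+\|(1-J)\varphi_{\rm M}\|_{\rm NC}\le\Lambda_{\rm J}\|\varphi_{\rm M}-(-\varphi_c)\|_{\rm NC}=\Lambda_{\rm J}\|\varphi\|_{\rm NC},
\]
and then $\|\varphi_c+J\varphi_{\rm M}\|_{\rm NC}=\|\varphi-(1-J)\varphi_{\rm M}\|_{\rm NC}\le(1+\Lambda_{\rm J})\|\varphi\|_{\rm NC}$ by the triangle inequality. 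With this correction your argument goes through. A minor additional remark: $(1-J)\varphi_{\rm M}$ is only piecewise polynomial on the Clough--Tocher sub-triangulation used to build $J$, not on $\cT$ itself, but the inverse inequality still holds with shape-regular constants since the sub-triangles are comparable to $T$.
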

\begin{lem}[\it{Bounds for $B_{\rm NC}(\bullet,\bullet,\bullet)$}]\label{Bnc bound}
	The boundedness  properties stated below hold:
	\begin{align*}
	(a)  \; &B_{\rm{NC}}(\bchi,\boldsymbol{\lambda},\Phi)\lesssim \trinl\bchi\trinr_{\rm{NC}}\trinl\boldsymbol{\lambda}\trinr_{\rm{NC}}\trinl\Phi\trinr_{0,\infty}\fl\bchi, \boldsymbol{\lambda}, \Phi\in \bV+\bV_{\rm{M}}.\\
	(b)  \; &B_{\rm{NC}}(\bchi,\boldsymbol{\lambda},\Phi)\lesssim \trinl\bchi\trinr_{\rm{NC}}\trinl\boldsymbol{\lambda}\trinr_{\rm{NC}}\trinl\Phi\trinr_{\rm{NC}}\fl\bchi, \boldsymbol{\lambda}, \Phi \in \bV+\bV_{\rm{M}}.\\
	(c) \; &B_{\rm{NC}}(\bchi,\boldsymbol{\lambda},\Phi)\lesssim \trinl\bchi\trinr_{2+\gamma}\trinl\boldsymbol{\lambda}\trinr_{\rm{NC}}\trinl\Phi\trinr_{0,4,h}\fl \bchi\in\bH^{2+\gamma}(\Omega), \boldsymbol{\lambda}, \Phi\in \bV + \bV_{\rm{M}}.\\
	(d) \; &B_{\rm{NC}}(\bchi,\boldsymbol{\lambda},\Phi)\lesssim \trinl\bchi\trinr_{2+\gamma}\trinl\boldsymbol{\lambda}\trinr_{\rm{NC}}\trinl\Phi\trinr_{1}\fl \bchi\in\bH^{2+\gamma}(\Omega), \boldsymbol{\lambda} \in \bV + \bV_{\rm{M}},  \Phi\in H^1_0(\O).	\\
	(e) \; &B_{\rm{NC}}(\bchi,\boldsymbol{\lambda},\Phi)\lesssim \trinl\bchi\trinr_{2+\gamma}\trinl\boldsymbol{\lambda}\trinr_{2+\gamma}\trinl\Phi\trinr_{0,2,h}\fl \bchi,  \boldsymbol{\lambda} \in\bH^{2+\gamma}(\Omega), \Phi\in \bV + \bV_{\rm{M}}.
	\end{align*}
\end{lem}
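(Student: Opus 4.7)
The entire lemma reduces to estimating a single prototype integral, namely $\int_T[\eta,\chi]\varphi\dx$ over a triangle $T\in\cT$, since $B_{\rm NC}$ is (up to signs) a sum of three copies of $b_{\rm NC}(\eta,\chi,\varphi)=-\tfrac12\sit[\eta,\chi]\varphi\dx$ applied to various scalar components of $\bchi,\blm,\Phi$. The crucial pointwise observation is that the von K\'arm\'an bracket satisfies $|[\eta,\chi]|\le 2|D^2\eta|\,|D^2\chi|$ almost everywhere on $T$. Hence the proof of each of (a)--(e) is a matter of assigning appropriate $L^p(T)$ exponents to the three factors $D^2\eta$, $D^2\chi$, $\varphi$, applying H\"older on each triangle, and then a discrete H\"older/Cauchy--Schwarz over $T\in\cT$.

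\textbf{Estimates (a)--(b).} For (a) I would split as $L^2(T)\cdot L^2(T)\cdot L^\infty(T)$ to get $|b_{\rm NC}(\chi_i,\lambda_j,\varphi_k)|\lesssim \sum_T\|D^2\chi_i\|_{L^2(T)}\|D^2\lambda_j\|_{L^2(T)}\|\varphi_k\|_{L^\infty(T)}$, and then discrete Cauchy--Schwarz plus the bound $\max_{T\in\cT}\|\varphi_k\|_{L^\infty(T)}\le\|\varphi_k\|_{0,\infty,h}=\|\varphi_k\|_{0,\infty}$ yields (a) at the component level; summing over the three terms in $B_{\rm NC}$ produces the vector estimate. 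Part (b) is then immediate: chain (a) with \Cref{staest}(a), i.e.\ $\trinl\Phi\trinr_{0,\infty}\lesssim\trinl\Phi\trinr_{\rm NC}$.

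\textbf{Estimates (c)--(e).} For (c) the key ingredient is the 2D Sobolev embedding $H^\gamma(\Omega)\hookrightarrow L^4(\Omega)$, valid for $\gamma>1/2$, which, applied componentwise, gives $D^2\bchi\in\bL^4(\Omega)$ with $\|D^2\chi_i\|_{L^4(\Omega)}\lesssim\trinl\bchi\trinr_{2+\gamma}$. Since $\bchi$ is globally smooth enough, the broken $L^4$-norm on $\cT$ coincides with the global one. Distributing exponents $L^4(T)\cdot L^2(T)\cdot L^4(T)$ on each triangle and summing with the discrete H\"older inequality $\sum_T a_T b_T c_T\le(\sum_T a_T^4)^{1/4}(\sum_T b_T^2)^{1/2}(\sum_T c_T^4)^{1/4}$ produces $\trinl\bchi\trinr_{2+\gamma}\trinl\blm\trinr_{\rm NC}\trinl\Phi\trinr_{0,4,h}$. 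Estimate (d) now follows at once by composing (c) with the 2D embedding $H^1_0(\Omega)\hookrightarrow L^4(\Omega)$, so that $\trinl\Phi\trinr_{0,4,h}=\trinl\Phi\trinr_{L^4(\Omega)}\lesssim\trinl\Phi\trinr_1$ whenever $\Phi\in H^1_0(\Omega)$. For (e), the same philosophy applies but now the Sobolev bound on $D^2\bchi$ \emph{and} on $D^2\blm$ is used; the H\"older split $L^4(T)\cdot L^4(T)\cdot L^2(T)$ and one discrete H\"older step produces $\trinl\bchi\trinr_{2+\gamma}\trinl\blm\trinr_{2+\gamma}\trinl\Phi\trinr_{0,2,h}$.

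The only nontrivial step is the use of Sobolev embeddings in (c)--(e); once $\gamma>1/2$ is in hand, this is standard and, because the functions with $H^{2+\gamma}$ regularity are globally in $\Omega$, no jump terms across interior edges $E\in\cE(\Omega)$ arise and the broken norms collapse to their global counterparts. Modulo this input, the proofs of all five bounds are completely parallel and require no argument beyond the pointwise bound on the bracket plus H\"older on a triangle and summation.
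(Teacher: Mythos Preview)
Your proof is correct and follows essentially the same route as the paper: the pointwise bound on the von K\'arm\'an bracket combined with a H\"older split on each triangle, then Lemma~\ref{staest}(a) for (b), the embedding $H^{2+\gamma}(\Omega)\hookrightarrow W^{2,4}(\Omega)$ (equivalently your $H^\gamma\hookrightarrow L^4$ applied to second derivatives) for (c) and (e), and $H^1_0(\Omega)\hookrightarrow L^4(\Omega)$ for (d). The paper is merely terser, stating the intermediate scalar inequalities without spelling out the exponent bookkeeping or the discrete H\"older step.
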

\begin{proof}
	The  first bound  follows from the definition of 
	$B_{\NC}(\bullet,\bullet, \bullet)$ and the generalised H\"older's inequality. The  bound in $(b)$ follows from  $(a)$ and Lemma \ref{staest}.$(a)$. For $\chi \in H^{2+\gamma}(\O), \lambda$ and $ \phi \in V+V_\M$, $(c)$ follows from the definition of 
	$B_{\NC}(\bullet,\bullet, \bullet)$, the estimate
	$ \sum_{T\in\mathcal{T}}\int_{T} [\chi,\lambda]\phi \dx\lesssim\|\chi\|_{2,4}\|\lambda\|_{\NC}\|\phi\|_{0,4,h},$ 
	and the continuous Sobolev imbedding $H^{2+\gamma}(\Omega) \hookrightarrow W^{2,4}(\Omega)$. The  bound in  $(d)$  follows from  $(c)$ and the continuous Sobolev embedding $H^1(\O) \hookrightarrow L^4(\O)$. 	
	The last bound follows from
	$ \sum_{T\in\mathcal{T}}\int_{T} [\chi,\lambda]\phi \dx\lesssim\|\chi\|_{2,4}\|\lambda\|_{2,4}\|\phi\|_{0,2,h},$ 
	and $H^{2+\gamma} (\Omega) \hookrightarrow W^{2,4}(\Omega)$ where $\chi, \lambda \in H^{2+\gamma}(\O)$ and $ \phi \in V+V_\M$.
\end{proof}
\begin{lem}\label{Bnc bound adjointH1error}
For  $\Psi,\bchi, \Theta \in  \bV \cap \bH^{2+\gamma}(\O)$ and $\Psi_{\rm M} \in \bV_{\rm M}$,  $$B_{\rm NC}(\Psi-\Psi_{\rm M},\bchi, \Theta) \lesssim \left(h^\gamma\trinl\Psi-\Psi_{\rm M}\trinr_{\rm NC}+\trinl\Psi-\Psi_{\rm M} \trinr\right)\trinl \bchi \trinr_{2+\gamma}\trinl \Theta \trinr_{2+\gamma}.$$
\end{lem}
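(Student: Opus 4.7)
I would split the error via the companion operator $J$ of Lemma~\ref{hctenrich},
$\Psi - \Psi_\M = (\Psi - J\Psi_\M) + (J\Psi_\M - \Psi_\M) =: \bv + \bw$,
with $\bv\in\bV$ and $\bw$ a nonconforming corrector, and expand $B_\NC(\Psi-\Psi_\M,\bchi,\Theta)$ into the three scalar terms $\pm b_\NC(v_i+w_i,\chi_j,\theta_k)$ dictated by the definition of $B_\NC$. Each is bounded separately, using very different tools on the conforming ($\bv$) and nonconforming ($\bw$) parts.

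For the conforming part, the approach is to exploit the full symmetry of $b$ on $V\times V\times V$, a classical property of the von K\'arm\'an bracket that follows from $[\eta,\chi]=[\chi,\eta]$ combined with the Piola identity $\divc\cof(D^2\chi)=0$ and two integrations by parts (first on $C_c^\infty$ and then extended by density to $V^3$). This yields $b(v_i,\chi_j,\theta_k) = -\tfrac12\int_\Omega [\chi_j,\theta_k]\,v_i\dx$, so Cauchy--Schwarz together with $\|[\chi_j,\theta_k]\|_{L^2(\Omega)}\lesssim \|\chi_j\|_{W^{2,4}(\Omega)}\|\theta_k\|_{W^{2,4}(\Omega)}\lesssim \|\chi_j\|_{2+\gamma}\|\theta_k\|_{2+\gamma}$ (by generalised H\"older and the Sobolev embedding $H^{2+\gamma}(\Omega)\hookrightarrow W^{2,4}(\Omega)$, valid in 2D for $\gamma>1/2$) reduces the task to estimating $\|v_i\|_{L^2(\Omega)}$. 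The latter is controlled by the triangle inequality and Lemma~\ref{hctenrich}(d) with $v=\psi_i$: $\|v_i\|\le \|\psi_i-\psi_{\M,i}\| + \|(I-J)\psi_{\M,i}\| \lesssim \|\psi_i-\psi_{\M,i}\| + h^2\|\psi_i-\psi_{\M,i}\|_\NC$, and $h^2\le h^\gamma$ absorbs the last term into what the corrector will produce.

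For the corrector $\bw$, the crucial tool is the integral-mean property $\Pi_0 D^2_\NC\bw=0$ on every $T\in\T$ from Lemma~\ref{hctenrich}(c). Since $\cof$ is linear on $2{\times}2$ matrices, $\Pi_0\cof(D^2_\NC \bw)=0$ on each $T$, so by constant-orthogonality $\int_T\cof(D^2 w_i):(D^2\chi_j\,\theta_k)\dx = \int_T \cof(D^2 w_i):(I-\Pi_0)(D^2\chi_j\,\theta_k)\dx$. Cauchy--Schwarz bounds the right-hand side by $\|D^2_\NC w_i\|_{L^2(T)}\,\|(I-\Pi_0)(D^2\chi_j\,\theta_k)\|_{L^2(T)}$. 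Writing $D^2\chi_j\,\theta_k = (D^2\chi_j)(\theta_k-\Pi_0\theta_k) + (\Pi_0\theta_k)\,D^2\chi_j$ and applying the Bramble--Hilbert-type estimates $\|(I-\Pi_0)g\|_{L^2(T)}\lesssim h_T^\gamma|g|_{H^\gamma(T)}$ and $\|\theta_k-\Pi_0\theta_k\|_{L^\infty(T)}\lesssim h_T\|\theta_k\|_{W^{1,\infty}(T)}$ (using $h_T\le h_T^\gamma$ since $\gamma\le 1$ and $H^{2+\gamma}\hookrightarrow W^{1,\infty}$ in 2D) gives a local bound of order $h_T^\gamma\|\chi_j\|_{H^{2+\gamma}(T)}\|\theta_k\|_{H^{2+\gamma}(T)}$. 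Summing over $T$ with Cauchy--Schwarz in the triangle index and invoking $\|\bw\|_\NC\lesssim \trinl\Psi-\Psi_\M\trinr_\NC$ (Lemma~\ref{hctenrich}(d) with $v=\Psi$) produces the $h^\gamma\trinl\Psi-\Psi_\M\trinr_\NC$ factor.

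Adding the two estimates and summing over the three scalar pieces of $B_\NC$ delivers the stated bound. The main obstacle I anticipate is the rigorous justification of the symmetry of $b$ at minimal regularity: one argument $v_i$ lies only in $H^2_0$, not in $H^{2+\gamma}$, so the Piola-identity integrations by parts must be set up on $C_c^\infty$ and then extended by density, exploiting continuity of $b$ on $H^2\times H^2\times L^\infty$ via $H^2(\Omega)\hookrightarrow L^\infty(\Omega)$ in 2D. The product-rule treatment of $(I-\Pi_0)(D^2\chi_j\,\theta_k)$ is a secondary but routine Bramble--Hilbert / Dupont--Scott exercise on the reference triangle.
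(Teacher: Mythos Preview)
Your approach is correct and follows essentially the same route as the paper: split via the companion operator $J$, use the full symmetry of $b$ on $V^3$ to reduce the conforming piece $\Psi-J\Psi_\M$ to an $L^2$ bound (the paper phrases this as Lemma~\ref{Bnc bound}(e)), and exploit the orthogonality $\Pi_0 D_\NC^2((J-1)\Psi_\M)=0$ from Lemma~\ref{hctenrich}(c) for the corrector. The only cosmetic difference is that, instead of your direct Bramble--Hilbert estimate on $(I-\Pi_0)(D^2\chi_j\,\theta_k)$, the paper inserts $I_\M\bchi$ and $\Pi_0\Theta$ so that $B_\NC((J-1)\Psi_\M,I_\M\bchi,\Pi_0\Theta)=0$ and then bounds the two remainders via the already-packaged Lemmas~\ref{Bnc bound}(a),(b) and~\ref{Morley_Interpolation}(c).
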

\begin{proof}
Since the piecewise second derivatives of $I_\M\bchi$ are constants, the definition of $B_\NC(\bullet,\bullet,\bullet)$ and Lemma \ref{hctenrich}.$(c)$ show $B_{\NC}((J-1)\Psi_{\M},I_\M \bchi, \P_0\Theta)=0$. This and elementary algebra lead to
\begin{align}
& B_{\rm NC}(\Psi-\Psi_{\rm M},\bchi, \Theta)	=B(\Psi-J\Psi_{\M},\bchi, \Theta)+B_{\NC}((J-1)\Psi_{\M},\bchi, \Theta)\nonumber\\
&\quad =B(\Psi-J\Psi_{\M},\bchi, \Theta)+B_{\NC}((J-1)\Psi_\M,(1-I_\M) \bchi, \Theta)+B_{\NC}((J-1)\Psi_{\M},I_\M \bchi, (1-\P_0) \Theta).\label{bnc.adjoint}
\end{align}
The definition of $B(\bullet,\bullet,\bullet)$, the symmetry of $b(\bullet,\bullet,\bullet)$ in the first and third variables, Lemma \ref{Bnc bound}.$(e)$, triangle inequality with $\Psi_\M$ and Lemma \ref{hctenrich}.$(d)$ with $v=\Psi$ lead to 
$B(\Psi-J\Psi_{\M},\bchi, \Theta) \lesssim \left(h^2\trinl \Psi-\Psi_{\M} \trinr_\NC+\trinl \Psi-\Psi_{\M}\trinr\right)\trinl \bchi \trinr_{2+\gamma}\trinl \Theta \trinr_{2+\gamma}.$ Lemmas \ref{Bnc bound}.$(b)$, \ref{Morley_Interpolation}.$(c)$ and \ref{hctenrich}.$(d)$ with $v=\Psi$ result in $B_{\NC}((J-1)\Psi_{\M},(1-I_\M) \bchi , \Theta)\lesssim h^\gamma\trinl \bchi \trinr_{2+\gamma}\trinl \Theta \trinr_{2} \times\trinl \Psi-\Psi_{\M} \trinr_\NC.$ Lemmas \ref{Bnc bound}.$(a)$, \ref{Morley_Interpolation}.$(b)$, \ref{hctenrich}.$(d)$ with $v=\Psi$, {
{projection estimate in $L^\infty(\cT)$  \cite[Proposition 1.135]{ErnJLU_2004}}} and the global Sobolev embedding $H^{2+\gamma}(\Omega)\hookrightarrow W^{1,\infty}(\Omega)$ imply $B_{\NC}((J-1)\Psi_{\M},I_\M \bchi, (1-\P_0)\Theta) \lesssim h \trinl \bchi \trinr_{2}\trinl \Theta \trinr_{2+\gamma} \trinl \Psi - \Psi_{\M} \trinr_\NC.$ 
A substitution of the last three bounds  in \eqref{bnc.adjoint} concludes the proof.
\end{proof}

\section{A priori error estimates}\label{sec.apriori}
This section deals with the {\it a priori} error estimates for the state, adjoint and control variables under minimal regularity assumptions on the exact solution. The proof of the results that differ from  \cite{SCNNDS} owing of the choice of the alternate discrete trilinear form are discussed in Appendix. 
\medskip

\noindent For a given ${\bf F}$, fixed control $u\in U_{ad}$ and ${\bf u}=(u,0)$,   consider the auxiliary state equation that seeks  $\Psi_u \in \bV$ such that 
\begin{align}\label{wv1}
{A}(\Psi_u,\Phi) +{B}(\Psi_u, \Psi_u,\Phi) = ( {\bf F+{\bf C} {\bf u}},\Phi )  \mbox{ for all } \Phi \in \bV.
\end{align}
\noindent The nonconforming Morley finite element (FE) approximation  to (\ref{wv1}) seeks $\Psi_{u,\M} \in \bV_\M$ such that 
\begin{align} 
&  A_{\NC}(\Psi_{u,\M},\Phi_\M)+B_{\NC}(\Psi_{u,\M},\Psi_{u,\M},\Phi_\M)=({\bf F} + {\mathbf C}{\bf u}, \Phi_\M) \; 
\mbox{ for all } \Phi_\M \in \bV_\M. \label{wform2}
\end{align}
%
%
 \noindent
  The next result on the existence, uniqueness and error estimates of the auxiliary state equation is proved with the help of Lemma~\ref{aux} given in the Appendix.  The proofs that are available in \cite{SCNNDS, CN2020} are skipped. Note that a modified proof of Lemma~\ref{aux} is presented and it utilises the properties of the companion operator to obtain sharper bounds  in  comparison to \cite[Lemma 3.12]{SCNNDS}.
\begin{thm}[\it{Existence, uniqueness and error estimates}] \label{ee1}\noindent
	\begin{itemize}
		\item [(i)]Let $({\bar \Psi}, {\bar u}) \in \bV \times L^2(\omega) $ be a regular solution to \eqref{wform}.
		Then, there exist $\rho_1, \rho_2>0$ such that, for a sufficiently small choice of the discretization parameter and ${u} \in B_{\rho_2}({\bar u})$, \eqref{wform2} admits a unique solution in 
		$B_{\rho_1}(\bar \Psi)$, where ${u} \in B_{\rho_2}({\bar u})$ (resp. ${\Psi} \in B_{\rho_1}({\bar \Psi})$ ) implies  $\|u - {\bar u}\|_{L^2(\omega)} \le \rho_2$ (resp.  $\trinl \Psi - {\bar \Psi} \trinr_{\rm{NC}} \le \rho_1$).
		\item[(ii)]	Let $({\bar \Psi}, {\bar u}) \in \bV \times L^2(\omega)$ be a regular solution to \eqref{wform}.
		Then, for ${u} \in B_{\rho_2}({\bar u})$ and a sufficiently small choice of the discretization parameter, the solutions $\Psi_{u}$ and $\Psi_{u,\rm {M}}$ to  \eqref{wv1} and \eqref{wform2} satisfy the energy and broken $H^1$ norm estimates 
	$	(a) \; \trinl\Psi_{u}-\Psi_{u,{\rm M}}\trinr_{\rm {NC}} \lesssim \|(1 - I_{\rm M})\Psi_u\|_{\rm NC}+{\rm{osc_1}}(f+\mathcal{C}u+[\psi_{u,1},\psi_{u,2}],\T) +{\rm{osc_1}}([\psi_{u,1},\psi_{u,1}],\T),$ 
	$ (b) \;  \; \trinl\Psi_{u}-\Psi_{u,{\rm M}}\trinr_{1,2,h} \lesssim   h^{\gamma}(\trinl\Psi_{u}-\Psi_{u,{\rm M}}\trinr_{\rm {NC}} +{\rm{osc}}_m(f+\mathcal{C}u,\T)) \mbox{ for each } m \in \N_0.$
		\item[(iii)] 	For $u, \:{\hat u} \in B_{\rho_2}({\bar u}),$ and a sufficiently small choice of the discretization parameter,   the solutions $\Psi_{u}$ and $\Psi_{{\hat u},{\rm M}}$ to \eqref{wv1} and \eqref{wform2}, with controls chosen as $u$ and ${\hat u}$ respectively, satisfy
		$\trinl \Psi_{u}- \Psi_{{\hat u},{\rm M}}\trinr_{\rm NC} \lesssim  h^{\gamma} + \|u- {\hat u}\|_{L^2(\omega)}.$
	\end{itemize}
	Here $\gamma \in (1/2,1]$ is the elliptic regularity index. \qed
\end{thm}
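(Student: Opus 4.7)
\textbf{Proof plan for \cref{ee1}.} The strategy is to apply the Brezzi--Rappaz--Raviart framework to the nonconforming Morley discretisation, viewing \eqref{wform2} as a perturbation of the linearisation of \eqref{wv1} about the regular solution $\bar\Psi$. The engine of the argument is a discrete inf-sup condition for the linearised operator $\mathcal{A}+\mathcal{B}'(\bar\Psi)$ transplanted to $\bV_{\M}$ via the companion operator $J$ of \cref{hctenrich}. Precisely, given $\Xi_{\M}\in\bV_{\M}$, one tests the continuous inf-sup \eqref{inf-sup_apost_psi} against $J\Xi_{\M}\in\bV$ and replaces $A$, $B$ by $A_{\NC}$, $B_{\NC}$; the resulting mismatch is
$A_{\NC}((J{-}1)\Xi_{\M},\Phi)$ and analogous terms for $B_{\NC}$, all of which are of order $h^{\gamma}\,\trinl\Xi_{\M}\trinr_{\NC}\,\trinl\Phi\trinr_{2}$ by \cref{Anc.bound,Bnc bound,Bnc bound adjointH1error} together with \cref{Morley_Interpolation,hctenrich}. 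Choosing $h$ sufficiently small, these perturbation terms are absorbed and one obtains the discrete inf-sup
\[
0<\beta/2 \;\le\; \inf_{\Xi_{\M}\in\bV_{\M}}\sup_{\Phi_{\M}\in\bV_{\M}}\frac{A_{\NC}(\Xi_{\M},\Phi_{\M})+\langle\mathcal{B}_{\NC}'(I_{\M}\bar\Psi)\Xi_{\M},\Phi_{\M}\rangle}{\trinl\Xi_{\M}\trinr_{\NC}\trinl\Phi_{\M}\trinr_{\NC}} .
\]
This is precisely the content of \cref{aux} referenced above, and it is the main workhorse of the proof.

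For part (i), fix $u\in B_{\rho_2}(\bar u)$ and define the map $\mathcal{F}\colon B_{\rho_1}(\bar\Psi)\cap\bV_{\M}\to\bV_{\M}$ by $\mathcal{F}(\Phi_{\M})=\Psi_{\M}$, where $\Psi_{\M}$ solves the affine problem
$A_{\NC}(\Psi_{\M},\chi_{\M})+\langle\mathcal{B}_{\NC}'(I_{\M}\bar\Psi)\Psi_{\M},\chi_{\M}\rangle=({\bf F}+{\mathbf C}{\bf u},\chi_{\M})+\langle\mathcal{B}_{\NC}'(I_{\M}\bar\Psi)\Phi_{\M},\chi_{\M}\rangle-B_{\NC}(\Phi_{\M},\Phi_{\M},\chi_{\M})$
for every $\chi_{\M}\in\bV_{\M}$. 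The discrete inf-sup makes $\mathcal{F}$ well defined. Using \cref{Bnc bound}(b) and the Lipschitz continuity of the quadratic defect, one verifies that $\mathcal{F}$ maps a closed ball $\bar B_{\rho_1}(I_{\M}\bar\Psi)\subset\bV_{\M}$ into itself and is a contraction, provided $h$ and $\|u-\bar u\|_{L^2(\omega)}$ are small; the unique fixed point is $\Psi_{u,\M}$, and a triangle inequality with \cref{Morley_Interpolation}(c) places it in $B_{\rho_1}(\bar\Psi)$.

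For part (ii)(a), I write $\Psi_u-\Psi_{u,\M}=(\Psi_u-I_{\M}\Psi_u)+(I_{\M}\Psi_u-\Psi_{u,\M})$ and estimate the second piece by the discrete inf-sup: test the difference of \eqref{wv1} (applied to $J\Phi_{\M}\in\bV$) and \eqref{wform2} against $\Phi_{\M}\in\bV_{\M}$. The consistency defect splits into (a) $A_{\NC}(\Psi_u,(J-1)\Phi_{\M})-B(\Psi_u,\Psi_u,(J-1)\Phi_{\M})-(f+\mathcal{C}u,(J-1)\varphi_{\M,1})$, which by \cref{hctenrich}(c) equals a data oscillation plus nonconforming residuals controlled by \cref{Anc.bound,Bnc bound}, and (b) the nonlinear defect $B_{\NC}(\Psi_u,\Psi_u,\Phi_{\M})-B_{\NC}(I_{\M}\Psi_u,I_{\M}\Psi_u,\Phi_{\M})$, bounded via \cref{Bnc bound}(b) and \cref{Morley_Interpolation}(b). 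For part (ii)(b), I use a duality argument: given $g\in L^2$ with $\trinl g\trinr\le 1$, solve the linearised adjoint problem with datum $g$; elliptic regularity provides the solution $\Theta\in\bV\cap\bH^{2+\gamma}(\Omega)$, and plugging $I_{\M}\Theta$ and $J I_{\M}\Theta$ into the error identity yields, via \cref{Bnc bound adjointH1error} and \cref{Anc.bound}(b), an extra factor $h^{\gamma}$ over the energy estimate, delivering the broken $H^{1}$ bound.

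Part (iii) follows by a triangle inequality $\Psi_u-\Psi_{\hat u,\M}=(\Psi_u-\Psi_{\hat u})+(\Psi_{\hat u}-\Psi_{\hat u,\M})$: the first term is $O(\|u-\hat u\|_{L^2(\omega)})$ by Lipschitz continuity of $u\mapsto\Psi_u$ from \cref{th2.5}, and the second is $O(h^{\gamma})$ by part (ii)(a) combined with \cref{Morley_Interpolation}(c) and \cref{ap}(a). The principal obstacle throughout is the transfer of the continuous inf-sup \eqref{inf-sup_apost_psi} to $\bV_{\M}$ under the new simplified trilinear form $b_{\NC}$: the absence of symmetry in the second/third arguments forces a delicate use of $J$ and of \cref{Bnc bound adjointH1error} to absorb the extra terms $B_{\NC}(\Xi_{\M},(J-1)\chi,\Phi)$ that did not arise in \cite{SCNNDS}, and this is exactly what \cref{aux} is designed to handle.
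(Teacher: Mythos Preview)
Your overall architecture (Brezzi--Rappaz--Raviart / fixed point, then best-approximation plus duality) matches the paper's, but your engine for the discrete inf--sup is miscalibrated and contains a genuine gap.

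First, \cref{aux} is \emph{not} the discrete inf--sup you wrote down. It is the operator-norm estimate
\(
\|T[\mathcal{B}'_{\NC}(\bar\Psi)]-T_{\NC}[\mathcal{B}'_{\NC}(\Psi)]\|_{\mathcal{L}(\bV+\bV_{\M})}<\epsilon,
\)
comparing the continuous and discrete biharmonic solution operators composed with the linearised nonlinearity. The route in \cite{SCNNDS} (and here) is the operator reformulation $\Psi+T\mathcal{B}_{\NC}(\Psi)=T(\mathbf{F}+\mathbf{C}\mathbf{u})$; regularity of $\bar\Psi$ makes $I+T\mathcal{B}'_{\NC}(\bar\Psi)$ an automorphism on $\bV+\bV_{\M}$, and \cref{aux} plus \cref{banach} transfer invertibility to $I+T_{\NC}\mathcal{B}'_{\NC}(\Psi)$. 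The fixed-point / contraction argument is then carried out in this operator form. The point is that the \emph{smoothing} of $T$ (it lands in $\bH^{2+\gamma}$ by \eqref{reguesti}) is what creates the factor $h^{\gamma}$; see the use of \eqref{bap.biharmonic}--\eqref{intermediate1} in the proof of \cref{aux}.

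Second, your direct transplantation fails at the step where you assert
\(
A_{\NC}((J-1)\Xi_{\M},\Phi)=O(h^{\gamma})\trinl\Xi_{\M}\trinr_{\NC}\trinl\Phi\trinr_{2}.
\)
\cref{Anc.bound}(a) needs one factor in $\bH^{2+\gamma}$, but here $\Phi\in\bV$ is the generic inf--sup witness (only $H^2_0$) and $\Xi_{\M}$ is an arbitrary Morley function; the best you get from \cref{hctenrich}(d) is $\|(J-1)\Xi_{\M}\|_{\NC}\le\Lambda_{\jc}\min_{v\in V}\|D^2_{\NC}(\Xi_{\M}-v)\|$, which carries no power of $h$. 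The same obstruction hits the $B_{\NC}$ mismatches. Without routing through $T$ (or an equivalent compact-perturbation/Schatz argument that you do not invoke), the discrete inf--sup does not follow.

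Once the discrete stability is obtained the paper's way, your sketches for (ii)(a)--(b) and (iii) are essentially the arguments cited from \cite{CN2020} and \cite{SCNNDS}; the triangle inequality in (iii) via \cref{th2.5} and part (ii)(a) is exactly how it is done.
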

\noindent The proof of $(i)$ and $(iii)$ can be found in \cite[Theorem 3.8 $(i)$ and Lemma 3.9]{SCNNDS}. The error estimate in energy and piecewise $H^1$ norms given by $(ii)(a)$-$(b)$ are established in \cite[Theorem 3.1]{CN2020}.
\begin{rem}\label{rem.state}
 The well-known result for the biharmonic problem for the approximation using Morley nonconforming FEM which states that the $L^2$ error estimate cannot be further improved than that of $H^1$ error estimate \cite{HuShi} extends to \vket and thus Theorem \ref{ee1}.$(ii)$, and Lemma \ref{Morley_Interpolation}.$(c)$ show 
	$	(a) \; \trinl\Psi_{u}-\Psi_{u,{\rm M}}\trinr_{\rm {NC}} \lesssim  h^\gamma, $ \\$\; (b) \;  \; \trinl\Psi_{u}-\Psi_{u,{\rm M}}\trinr_{1,2,h} \lesssim   h^{2\gamma}$, and $ (c) \;  \; \trinl\Psi_{u}-\Psi_{u,{\rm M}}\trinr\lesssim   h^{2\gamma}.$ \qed
\end{rem}
\noindent The auxiliary problem corresponding to the adjoint equations seeks $\Theta_{u}\in \bV$ such that
\begin{align} \label{auxiadje_12}
{A}(\Phi, \Theta_u) +2{B}(\Psi_{u}, \Phi, \Theta_u)=(\Psi_{u}-\Psi_{d},\Phi) \mbox{ for all } \Phi\in \bV,\end{align}
where  $\Psi_{u} \in \bV$ is the solution to \eqref{wv1}. A Morley FE discretization corresponding to  \eqref{auxiadje_12} seeks $\Theta_{u,\M}\in \bV_\M$ such that, for all $\Phi_{\M}\in \bV_{\M}$, 
\begin{align} \label{auxiadje_1_dis}
{A}_{\NC}(\Phi_\M, \Theta_{u,\M})+2{B}_{\NC}(\Psi_{u,\M},\Phi_\M,  \Theta_{u,\M})=(\Psi_{u,\M}- \Psi_d, \Phi_\M).
\end{align}
%

\noindent The existence, uniqueness and convergence results stated in the next theorem follow analogous to that of \cite[Theorems 4.1, 4.2 $(a)$]{SCNNDS} and is skipped for brevity.
\begin{thm}[{\it Existence, uniqueness and energy error estimate}] \label{aux_dis1} 
Let $({\bar \Psi}, {\bar u}) \in \bV \times L^2(\omega)$ be a regular solution to \eqref{wform}. Then, (i)  there exist $0 < \rho_3 \le \rho_2$  such that, for a sufficiently small choice of discretization parameter  and ${u} \in B_{\rho_3}({\bar u})$, \eqref{auxiadje_1_dis} admits a unique solution, (ii)  for $u\in B_{\rho_3}(\bar{u})$ and  a sufficiently small choice of discretization parameter, the solutions $\Theta_{u}$ and $\Theta_{u,{\rm M}}$ of \eqref{auxiadje_12} and \eqref{auxiadje_1_dis} satisfy the  energy norm error estimate:	
$\,\trinl\Theta_{u}-\Theta_{u,{\rm M}}\trinr_{\rm NC}\lesssim \trinl \Psi_u-\Psi_{u, \rm M}  \trinr_{\rm NC}+h^\gamma (\| \psi_{u,1} - \psi_{d,1}-[\psi_{u,1}, \theta_{u,2}]+ [ \psi_{u,2},\theta_{u,1}]\|
		+\| \psi_{u,2}-\psi_{d,2}+[\psi_{u,1}, \theta_{u,1}]\|)$,
		where $\Psi_u$ (resp. $\Psi_{u,\rm M}$) solves \eqref{wv1} (resp. \eqref{wform2}) and $\gamma\in(\frac{1}{2},1]$ is the index of the elliptic regularity.		
\end{thm}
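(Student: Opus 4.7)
The plan is the familiar two-step strategy for a nonsymmetric linear problem posed via a regular reference solution: establish a discrete inf--sup condition for the operator on the left of \eqref{auxiadje_1_dis}, which gives part (i) at once by linearity on the finite-dimensional space $\bV_\M$, and then bound the energy error in part (ii) through a triangle inequality with the Morley interpolant $I_\M \Theta_u$ as an intermediate object and a residual computation against the inf--sup test function. The extra $\trinl \Psi_u - \Psi_{u,\M}\trinr_\NC$ summand in the final bound is a genuine ``linearization-at-the-discrete-state'' correction, because the discrete adjoint is linearized at $\Psi_{u,\M}$ while the continuous one is linearized at $\Psi_u$; the $h^\gamma$ prefactor absorbs all other consistency errors.

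For part (i), the continuous inf--sup constant $\beta$ of \eqref{inf-sup_apost_psi} at $\bar\Psi$ transfers to $\Psi_u$ for $u \in B_{\rho_2}(\bar u)$ by combining Lemma \ref{ap}$(c)$ with the smoothness of $\mathcal{B}'$. To pass to the discrete setting, given $\bxi_\M \in \bV_\M$, apply the continuous inf--sup at $J\bxi_\M \in \bV$ (with $J$ the companion operator of Lemma \ref{hctenrich}) to produce $\Phi \in \bV$, and take $I_\M \Phi \in \bV_\M$ as the discrete test function. The perturbation gap between $A + 2B(\Psi_u, \cdot, \cdot)$ and $A_\NC + 2B_\NC(\Psi_{u,\M}, \cdot, \cdot)$ is controlled by Lemmas \ref{Anc.bound}$(a)$--$(b)$, \ref{Bnc bound}$(a)$--$(b)$, \ref{Bnc bound adjointH1error}, together with $\trinl \Psi_u - \Psi_{u,\M}\trinr_\NC \lesssim h^\gamma$ from Remark \ref{rem.state}$(a)$; for $h$ and $\rho_3 \le \rho_2$ small enough, these perturbations are absorbed and a discrete inf--sup with constant $\ge \beta/2$ follows, whence part (i).

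For part (ii), split $\trinl \Theta_u - \Theta_{u,\M}\trinr_\NC \le \trinl (1 - I_\M)\Theta_u\trinr_\NC + \trinl e_\M\trinr_\NC$ with $e_\M := I_\M\Theta_u - \Theta_{u,\M} \in \bV_\M$. The interpolation term is $\lesssim h^\gamma \trinl \Theta_u\trinr_{2+\gamma}$ by Lemma \ref{Morley_Interpolation}$(c)$, and the regularity for the linear adjoint problem \eqref{auxiadje_12}, analogous to Lemma \ref{ap}$(a)$, bounds $\trinl \Theta_u\trinr_{2+\gamma}$ by the stated $L^2$-data norms. For $\trinl e_\M\trinr_\NC$, pick $\Phi_\M \in \bV_\M$ from the discrete inf--sup of part (i), substitute \eqref{auxiadje_1_dis} for $\Theta_{u,\M}$, and compare with \eqref{auxiadje_12} tested against $J\Phi_\M \in \bV$. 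The residual decomposes into: $A_\NC((1-J)\Phi_\M, \Theta_u)$, bounded by Lemma \ref{Anc.bound}$(a)$; an interpolation contribution $B_\NC(\Psi_{u,\M}, \Phi_\M, (I_\M - 1)\Theta_u)$, handled via Lemmas \ref{Bnc bound}$(b)$ and \ref{Morley_Interpolation}$(c)$; a linearization-shift piece $B_\NC(\Psi_{u,\M} - \Psi_u, \Phi_\M, \Theta_u)$, bounded by Lemma \ref{Bnc bound}$(a)$ using $\Theta_u \in \bH^{2+\gamma} \hookrightarrow \bL^\infty$, which produces exactly the $\trinl \Psi_u - \Psi_{u,\M}\trinr_\NC$ term in the final estimate; a $B$-consistency piece $B_\NC(\Psi_u, (1-J)\Phi_\M, \Theta_u)$, treated in the spirit of the proof of Lemma \ref{Bnc bound adjointH1error} via the symmetry of $B_\NC$ in its first two slots and the moment cancellations of Lemma \ref{hctenrich}$(b)$--$(c)$; and a data residual $(\Psi_{u,\M} - \Psi_u, \Phi_\M) + (\Psi_u - \Psi_d, (J - 1)\Phi_\M)$, bounded by Remark \ref{rem.state}$(c)$ and Lemma \ref{hctenrich}$(d)$. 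Summing these estimates with $h^{2\gamma} \le h^\gamma$ yields the claimed bound.

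The main obstacle is the $B$-consistency term $B_\NC(\Psi_u, (1-J)\Phi_\M, \Theta_u)$: the rough factor $(1-J)\Phi_\M$ sits in the middle slot between two smooth factors, so Lemma \ref{Bnc bound adjointH1error} does not apply verbatim. Swapping slots via the symmetry of $B_\NC$ is legal, but one must still exploit $\Pi_0 D_\NC^2 (1-J)\Phi_\M = 0$ and $\Pi_0 (1-J)\Phi_\M = 0$ together with the interpolation estimate $\trinl (1 - I_\M)\bchi\trinr_\NC \lesssim h^\gamma \trinl \bchi\trinr_{2+\gamma}$ for $\bchi \in \{\Psi_u, \Theta_u\}$ to gain the $h^\gamma$ factor. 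Once this calibration is performed, the remaining residuals are routine inf--sup bookkeeping with the Morley interpolation and companion-operator toolbox.
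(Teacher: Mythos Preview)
Your argument for part~(ii) is essentially correct and matches what one expects from the referenced proof in \cite{SCNNDS}: the triangle inequality through $I_\M\Theta_u$, the residual comparison with \eqref{auxiadje_12} tested at $J\Phi_\M$, and the term-by-term bounds all go through, including your treatment of the $B$-consistency piece $B_\NC(\Psi_u,(1-J)\Phi_\M,\Theta_u)$ via the moment cancellations of Lemma~\ref{hctenrich}$(b)$--$(c)$ at the scalar level. (One caveat: $b_\NC$ is symmetric in its first two slots, but the vector form $B_\NC$ is not; your argument still works componentwise.)

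There is, however, a genuine gap in part~(i). The direct inf--sup transfer via $J$ and $I_\M$ fails at the $A$-term. With $\bxi_\M\in\bV_\M$ and $\Phi\in\bV$ the continuous inf--sup test function, the $A$-perturbation reduces (after using Lemma~\ref{Morley_Interpolation}$(a)$ and Lemma~\ref{hctenrich}$(c)$) to $A_\NC(\Phi,(J-1)\bxi_\M)=\int_\Omega (1-\Pi_0)D^2\Phi : D^2_\NC((J-1)\bxi_\M)$, and this is \emph{not} small: $\|(1-\Pi_0)D^2\Phi\|=\trinl(1-I_\M)\Phi\trinr_\NC$ carries no power of $h$ because $\Phi$ is only in $\bH^2_0(\Omega)$, not $\bH^{2+\gamma}(\Omega)$. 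Your appeal to Lemma~\ref{Anc.bound}$(a)$--$(b)$ is therefore illegitimate, since both statements need the $\bchi$-slot to lie in $\bH^{2+\gamma}(\Omega)$. In part~(ii) this causes no trouble because the smooth factor is $\Theta_u\in\bH^{2+\gamma}(\Omega)$ and the rough one is $(1-J)\Phi_\M$; in part~(i) the roles are reversed.

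The fix, and the route the paper takes implicitly via \cite{SCNNDS} and the tools assembled in its Appendix, is the $T$-operator formulation: rewrite \eqref{auxiadje_1_dis} as $\mathcal{F}_{\Psi_{u,\M},\NC}(\Theta_{u,\M})=T_\NC(\Psi_{u,\M}-\Psi_d)$ with $\mathcal{F}_{\Psi,\NC}$ from \eqref{adjlinop}, show that the adjoint analogue of Lemma~\ref{aux} gives $\|T[\mathcal{B}'_\NC(\bar\Psi)^*]-T_\NC[\mathcal{B}'_\NC(\Psi_{u,\M})^*]\|_{\mathcal L(\bV+\bV_\M)}<\epsilon$ for $h$ and $\rho_3$ small, and conclude via Lemma~\ref{banach} that $\mathcal{F}_{\Psi_{u,\M},\NC}$ is an isomorphism on $\bV_\M$ with uniformly bounded inverse. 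This sidesteps the nonconformity of the leading $A$-part entirely, because $A_\NC$ is absorbed into the identity through $T_\NC$ and only the compact trilinear perturbation needs to be controlled in operator norm.
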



\noindent The proof of {\it a priori} $H^1$ error estimate stated in the next theorem for adjoint variables is a non-trivial modification of the corresponding result in \cite{SCNNDS}  and is presented in the Appendix.  The form of the error estimate will be useful in the adaptive convergence study that is planned for future.
\begin{thm}[{\it piecewise $H^1$ error estimate}]\label{thm.adj.energy}
	Let $(\bar{\Psi},\bar{u}$) $\in \bV\times L^2(\omega)$ be a regular solution to \eqref{wform}. Then for a sufficiently small choice of the discretization parameter, the solutions $\Theta_{u}$ and $\Theta_{u,{\rm M}}$ of \eqref{auxiadje_12} and \eqref{auxiadje_1_dis} satisfy 
\begin{align*}
&\,\trinl \Theta_{u}- \Theta_{u,{\rm M}}\trinr_{1,2,h} 
	\lesssim h^\gamma\left(\trinl \Psi_u-\Psi_{u,{\rm M}}\trinr_{\rm NC}+\trinl\Theta_u-\Theta_{u,{\rm M}} \trinr_{\rm NC}+\trinl(1-\P_0)\Theta_u\trinr_{0,\infty}\right) \\
		&\quad \hspace{3cm} +\trinl\Psi_u-\Psi_{u,{\rm M}}\trinr_{\rm NC} \trinl\Theta_u-\Theta_{u,{\rm M}}\trinr_{\rm NC} 
+ \trinl\Psi_u-\Psi_{u,{\rm M}} \trinr +h^{2+\gamma}{\rm{osc}}_0(\Psi_{u}-\Psi_{d}),
	\end{align*}
	where  $\Psi_u$ (resp. $\Psi_{u,\rm M}$) solves \eqref{wv1} (resp. \eqref{wform2}) and $\gamma\in(\frac{1}{2},1]$ is the index of the elliptic regularity.
\end{thm}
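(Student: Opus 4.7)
The plan is a duality (Aubin–Nitsche) argument adapted to the nonconforming Morley setting with the added subtlety of the semi-linear coupling through $\Psi_u$. First I split the error via the companion operator $J$ of Lemma~\ref{hctenrich}: $\Theta_u-\Theta_{u,\rm M}=(\Theta_u-J\Theta_{u,\rm M})+(J-1)\Theta_{u,\rm M}$. Lemma~\ref{hctenrich}(d) applied with $v=\Theta_u$ yields
$\trinl(1-J)\Theta_{u,\rm M}\trinr_{1,2,h}\lesssim h\,\trinl\Theta_u-\Theta_{u,\rm M}\trinr_{\rm NC}$,
which is absorbed into the $h^\gamma$-term on the right-hand side of the theorem. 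It therefore suffices to bound the conforming piece $e^{\ast}:=\Theta_u-J\Theta_{u,\rm M}\in\bV$ in the broken (equivalently, standard) $H^1$ norm.

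The dual problem is: find $\xi\in\bV$ such that
\begin{equation*}
A(\xi,\Phi)+2B(\Psi_u,\xi,\Phi)=(\nabla e^{\ast},\nabla\Phi)\qquad\forall\,\Phi\in\bV.
\end{equation*}
The regular-solution hypothesis and inf--sup condition \eqref{inf-sup_apost_psi} guarantee well-posedness, and since $(\nabla e^{\ast},\nabla\cdot)$ is a bounded linear functional on $\bV$ of norm $\lesssim\trinl e^{\ast}\trinr_{1,2,h}$, elliptic regularity for the biharmonic operator with lower order terms on polygonal domains furnishes $\xi\in\bV\cap\bH^{2+\gamma}(\Omega)$ with $\trinl\xi\trinr_{2+\gamma}\lesssim\trinl e^{\ast}\trinr_{1,2,h}$.

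Testing the dual problem with $\Phi=e^{\ast}$ and inserting the continuous adjoint equation \eqref{auxiadje_12} with test function $\xi$ gives
\begin{equation*}
\trinl e^{\ast}\trinr_{1,2,h}^{2}=(\Psi_u-\Psi_d,\xi)-A(\xi,J\Theta_{u,\rm M})-2B(\Psi_u,\xi,J\Theta_{u,\rm M}).
\end{equation*}
I then subtract the discrete adjoint equation \eqref{auxiadje_1_dis} tested against $\Phi_{\rm M}=I_{\rm M}\xi\in\bV_{\rm M}$ and regroup into three pieces: a data-perturbation term $(\Psi_u-\Psi_{u,\rm M},\xi)+(\Psi_{u,\rm M}-\Psi_d,\xi-I_{\rm M}\xi)$, a bilinear consistency term $A_{\rm NC}(I_{\rm M}\xi-\xi,\Theta_{u,\rm M})+A(\xi,(1-J)\Theta_{u,\rm M})$, and a trilinear cross term $2B_{\rm NC}(\Psi_{u,\rm M},I_{\rm M}\xi,\Theta_{u,\rm M})-2B(\Psi_u,\xi,J\Theta_{u,\rm M})$. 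The first piece is controlled by Lemma~\ref{Morley_Interpolation}(b) together with the identity $(\Psi_{u,\rm M}-\Psi_d,\xi-I_{\rm M}\xi)=(\Psi_{u,\rm M}-\Psi_d-\Pi_0(\Psi_{u,\rm M}-\Psi_d),\xi-I_{\rm M}\xi)$, producing $\trinl\Psi_u-\Psi_{u,\rm M}\trinr\,\trinl\xi\trinr_{2+\gamma}$ and $h^{2+\gamma}{\rm osc}_0(\Psi_u-\Psi_d)\,\trinl\xi\trinr_{2+\gamma}$. The second piece is handled through Lemma~\ref{Anc.bound}(a)--(b): after inserting the $L^2$ projection $\Pi_0$ exploiting Lemma~\ref{hctenrich}(b)--(c), the manipulation $A(\xi,(1-J)\Theta_{u,\rm M})=A(\xi,(1-J)\Theta_{u,\rm M})$ together with the identity $A(\xi,\Theta_u-J\Theta_{u,\rm M})$ produces the characteristic $h^\gamma\trinl(1-\P_0)\Theta_u\trinr_{0,\infty}$ contribution that appears in the statement.

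The trilinear cross term is the principal technical obstacle. I decompose it by stepwise substitution of its three arguments. First change $\Psi_{u,\rm M}\mapsto\Psi_u$ using Lemma~\ref{Bnc bound adjointH1error} (with $\bchi=\xi$, $\Theta=\Theta_{u,\rm M}$), which delivers $(h^\gamma\trinl\Psi_u-\Psi_{u,\rm M}\trinr_{\rm NC}+\trinl\Psi_u-\Psi_{u,\rm M}\trinr)\trinl\xi\trinr_{2+\gamma}\trinl\Theta_{u,\rm M}\trinr_{2+\gamma}$. Second, replace $I_{\rm M}\xi\mapsto\xi$ using Lemma~\ref{Bnc bound}(c)--(d) combined with Lemma~\ref{Morley_Interpolation}(b). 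Third, enrich $\Theta_{u,\rm M}\mapsto J\Theta_{u,\rm M}$ via Lemma~\ref{hctenrich}(d)--(e). These substitutions together yield the remaining $h^\gamma\trinl\Theta_u-\Theta_{u,\rm M}\trinr_{\rm NC}$ and the genuinely quadratic cross term $\trinl\Psi_u-\Psi_{u,\rm M}\trinr_{\rm NC}\trinl\Theta_u-\Theta_{u,\rm M}\trinr_{\rm NC}$. Each contribution carries a factor $\trinl\xi\trinr_{2+\gamma}\lesssim\trinl e^{\ast}\trinr_{1,2,h}$; dividing through and combining with Step~1 yields the claimed estimate. The hardest part is bookkeeping the trilinear substitution chain so that each argument difference is paired with the correct interpolation or companion lift, ensuring that the $L^\infty$-type oscillation $\trinl(1-\P_0)\Theta_u\trinr_{0,\infty}$ surfaces with the correct power of $h^\gamma$ and that the $L^2$-state error appears in the linear (not quadratic) position.
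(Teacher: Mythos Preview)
Your overall architecture---a duality argument for the conforming residual, then subtraction of the discrete adjoint tested with $I_{\rm M}\xi$---is the same as the paper's, but two of the key mechanisms are misidentified and the proof as written does not go through.

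First, the identity $(\Psi_{u,\rm M}-\Psi_d,\xi-I_{\rm M}\xi)=(\Psi_{u,\rm M}-\Psi_d-\Pi_0(\Psi_{u,\rm M}-\Psi_d),\xi-I_{\rm M}\xi)$ is false: the Morley interpolant does \emph{not} preserve cell averages, so $(1-I_{\rm M})\xi$ is not $L^2$-orthogonal to piecewise constants. In the paper the oscillation term $h^{2+\gamma}{\rm osc}_0(\Psi_u-\Psi_d)$ appears only after routing the data term through the companion operator, because Lemma~\ref{hctenrich}(b) gives $\Pi_0((1-J)v_{\rm M})=0$; the relevant test function is $(J-1)I_{\rm M}\bchi_{\bf g}$, not $(1-I_{\rm M})\xi$.

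Second, and more seriously, your handling of the trilinear cross term is incorrect. You invoke Lemma~\ref{Bnc bound adjointH1error} with $\Theta=\Theta_{u,\rm M}$, but that lemma requires $\Theta\in\bV\cap\bH^{2+\gamma}(\Omega)$; the quantity $\trinl\Theta_{u,\rm M}\trinr_{2+\gamma}$ you write is meaningless for a Morley function. More importantly, you claim the characteristic term $h^\gamma\trinl(1-\P_0)\Theta_u\trinr_{0,\infty}$ comes from the \emph{bilinear} consistency piece---it does not. In the paper it emerges from the trilinear form via the orthogonality $B_{\rm NC}(\Psi_{u,\rm M},(1-JI_{\rm M})\bchi_{\bf g},\P_0\Theta_u)=0$, which holds because $D^2\Psi_{u,\rm M}$ is piecewise constant and $\Pi_0 D^2_{\rm NC}(1-JI_{\rm M})\bchi_{\bf g}=0$ (combining Lemma~\ref{Morley_Interpolation}(a) with Lemma~\ref{hctenrich}(c)); this lets one replace $\Theta_u$ by $(1-\P_0)\Theta_u$ in the third slot and estimate with Lemma~\ref{Bnc bound}(a). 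After that reduction there remains a residual $B_{\rm NC}(\Psi_{u,\rm M}-\Psi_u,\bchi_{\bf g},\Theta_u)$ which is exactly where Lemma~\ref{Bnc bound adjointH1error} is applied (with the smooth $\Theta=\Theta_u$) to produce $\trinl\Psi_u-\Psi_{u,\rm M}\trinr$ in linear position. Without locating these orthogonalities correctly, the substitution chain you describe cannot close.
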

 \begin{rem}\label{rem.adjoint}
 The {{projection estimate in $L^\infty(\cT)$ \cite[Proposition 1.135]{ErnJLU_2004}}} and global Sobolev embedding $H^{2+\gamma}(\Omega)\hookrightarrow W^{1,\infty}(\Omega)$ imply $\trinl(1-\P_0)\Theta_u\trinr_{0,\infty} \lesssim h \trinl \Theta_u \trinr_{2+\gamma}$. This, Theorems \ref{aux_dis1} and \ref{thm.adj.energy}, and Remark \ref{rem.state} show that\\
		$	(a) \; \trinl\Theta_{u}-\Theta_{u,{\rm M}}\trinr_{\rm {NC}} \lesssim  h^\gamma, \; (b) \;  \; \trinl\Theta_{u}-\Theta_{u,{\rm M}}\trinr_{1,2,h} \lesssim   h^{2\gamma}, \;\text{ and }  (c) \;  \; \trinl\Theta_{u}-\Theta_{u,{\rm M}}\trinr\lesssim   h^{2\gamma}.$ \qed	
\end{rem}
\medskip


\noindent
 For the error estimates of nonlinear control problem, second order sufficient optimality conditions are employed. For a detailed discussion, we refer to \cite[Section 2.3]{ngr} and \cite[Section 3.2]{cmj}.
	\begin{thm}[{\it A priori error estimates}]\cite[Theorem 5.1]{SCNNDS} \label{conv.apriori}
	Let  $({\bar \Psi}, {\bar u})$ be a regular solution to \eqref{wform}  and  $\{({\bar \Psi}_{\rm{M}}, {\bar u}_h)  \}_{h \le
		h_1}$  be a solution to  \eqref{discrete_cost} converging
	to $({\bar \Psi}, {\bar u})$ in $\bV \times L^2(\omega)$, for a sufficiently small mesh-size $h$ with ${\bar u}_h \in B_{\rho_3}(\bar u)$ as in Theorem \ref{aux_dis1}. Let $\bar \Theta $ and ${\bar \Theta}_{\rm{M}}$ be the corresponding continuous and discrete adjoint  variables, respectively.  Then, for a sufficiently small choice of the discretization parameter, it holds
$(a)\; \trinl \bar \Psi- {\bar \Psi}_{\rm M} \trinr_{\rm NC} \lesssim h^{\gamma}$, $ (b) \; \trinl \bar \Theta- {\bar \Theta}_{\rm M} \trinr_{\rm NC} \lesssim h^{\gamma}, \text{ and }  	(c) \; \|\bar{{u}}-\bar{{u}}_h\|_{L^2(\omega)}\lesssim  h,$
	$\gamma \in (1/2,1]$ being the index of elliptic regularity.
\end{thm}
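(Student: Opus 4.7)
The plan is to prove the control estimate (c) first via a second-order sufficient optimality argument, then deduce the state and adjoint estimates (a) and (b) by triangle inequality and the auxiliary-problem results of Theorems~\ref{ee1} and \ref{aux_dis1}. Throughout I will work with the reduced cost functionals $j(u):=\cJ(\Psi_u,u)$ and $j_h(u_h):=\cJ(\Psi_{u_h,\M},u_h)$, noting that by Theorem~\ref{th2.5} the map $u\mapsto\Psi_u$ is $C^\infty$, so $j'(u)v=(\mathbf C^*\Theta_u+\alpha\mathbf u,\mathbf v)$ and analogously in the discrete setting via $\Theta_{u_h,\M}$.

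First, I would invoke the second-order sufficient condition at the regular local minimizer $\bar u$ (see \cite[Sec.\ 2.3]{ngr}) which, combined with continuity of $j''$ along the segment $[\bar u,\bar u_h]$, yields a coercivity bound
\[
\tfrac{\delta}{2}\|\bar u-\bar u_h\|_{L^2(\omega)}^2\le\big(j'(\bar u_h)-j'(\bar u)\big)(\bar u_h-\bar u)
\]
for $\bar u_h$ close enough to $\bar u$ (guaranteed by the stated convergence hypothesis). I would then split the right-hand side into $(j'(\bar u_h)-j_h'(\bar u_h))(\bar u_h-\bar u)+\big(j_h'(\bar u_h)(\bar u_h-\bar u)-j'(\bar u)(\bar u_h-\bar u)\big)$. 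The continuous optimality \eqref{opt3} tested with $\bar u_h\in U_{ad}$ gives $j'(\bar u)(\bar u_h-\bar u)\ge 0$, and the discrete optimality \eqref{opt31} tested with $\Pi_0\bar u\in U_{h,ad}$ (which lies in $U_{h,ad}$ because $u_a\le\bar u\le u_b$) bounds $j_h'(\bar u_h)(\bar u_h-\bar u)$ above by $j_h'(\bar u_h)(\Pi_0\bar u-\bar u)$. Hence
\[
\|\bar u-\bar u_h\|_{L^2(\omega)}^2\lesssim\|\mathbf C^*(\Theta_{\bar u_h}-\Theta_{\bar u_h,\M})\|_{L^2(\omega)}\,\|\bar u_h-\bar u\|_{L^2(\omega)}+|j_h'(\bar u_h)(\Pi_0\bar u-\bar u)|.
\]

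The two data on the right are controlled as follows. For the adjoint term, Remark~\ref{rem.adjoint}(c) gives $\trinl\Theta_{\bar u_h}-\Theta_{\bar u_h,\M}\trinr\lesssim h^{2\gamma}\le h$ since $2\gamma>1$. For the projection term, the bang-bang representation \eqref{rep} together with the $\bH^{2+\gamma}(\Omega)$-regularity of $\bar\Theta$ (inherited from the regularity lemma extended to the adjoint system) shows that $\bar u$ is Lipschitz on $\omega$; thus $\|\Pi_0\bar u-\bar u\|_{L^2(\omega)}\lesssim h$, and a uniform bound on $j_h'(\bar u_h)$ concludes the estimate via Young's inequality, giving (c).

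For (a), Theorem~\ref{ee1}(iii) applied with $u=\bar u$ and $\hat u=\bar u_h$ yields $\trinl\bar\Psi-\bar\Psi_\M\trinr_\NC=\trinl\Psi_{\bar u}-\Psi_{\bar u_h,\M}\trinr_\NC\lesssim h^\gamma+\|\bar u-\bar u_h\|_{L^2(\omega)}\lesssim h^\gamma$. For (b), I would split $\bar\Theta-\bar\Theta_\M=(\Theta_{\bar u}-\Theta_{\bar u_h})+(\Theta_{\bar u_h}-\Theta_{\bar u_h,\M})$; the second piece is bounded by $h^\gamma$ via Theorem~\ref{aux_dis1} and Remark~\ref{rem.adjoint}(a), while the first is handled by subtracting the two linear adjoint equations satisfied by $\Theta_{\bar u}$ and $\Theta_{\bar u_h}$ and applying the inf--sup stability \eqref{inf-sup_apost_psi} of $\mathcal A+\mathcal B'(\bar\Psi)$ to get a Lipschitz bound $\trinl\Theta_{\bar u}-\Theta_{\bar u_h}\trinr_2\lesssim\|\bar u-\bar u_h\|_{L^2(\omega)}\lesssim h$. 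The main obstacle I foresee is the bookkeeping needed to justify the coercivity step uniformly as $h\to 0$: one must ensure the second-order condition at $\bar u$ transfers to the segment $[\bar u,\bar u_h]$ and to a sufficiently rich cone containing $\bar u_h-\bar u$; this relies on the regularity Theorem~\ref{th2.5} together with the convergence $\bar u_h\to\bar u$ that is part of the hypotheses, and mirrors the argument of \cite[Theorem~5.1]{SCNNDS}.
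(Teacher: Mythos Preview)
Your overall strategy is the standard one for this class of control problems and matches the approach of \cite{SCNNDS}, but there is a genuine gap in the estimate of the projection term that, as written, would only yield the suboptimal rate $\|\bar u-\bar u_h\|_{L^2(\omega)}\lesssim h^{1/2}$ instead of (c). After your splitting and use of the optimality conditions you arrive at
\[
\|\bar u-\bar u_h\|_{L^2(\omega)}^2\lesssim h^{2\gamma}\|\bar u-\bar u_h\|_{L^2(\omega)}+|j_h'(\bar u_h)(\Pi_0\bar u-\bar u)|.
\]
Bounding the second summand merely by $\|j_h'(\bar u_h)\|\,\|\Pi_0\bar u-\bar u\|_{L^2(\omega)}\lesssim h$ and applying Young's inequality gives $\|\bar u-\bar u_h\|_{L^2(\omega)}^2\lesssim h^{4\gamma}+h$, hence only $\|\bar u-\bar u_h\|_{L^2(\omega)}\lesssim h^{1/2}$.

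The missing idea is the $L^2$-orthogonality of $\Pi_0\bar u-\bar u$ to piecewise constants. Since $\bar u_h\in P_0(\cT|_\omega)$, one has
\[
j_h'(\bar u_h)(\Pi_0\bar u-\bar u)=(\mathcal C^*\bar\theta_{\M,1}+\alpha\bar u_h,\Pi_0\bar u-\bar u)=(\mathcal C^*\bar\theta_{\M,1}-\Pi_0(\mathcal C^*\bar\theta_{\M,1}),\Pi_0\bar u-\bar u),
\]
and then the Poincar\'e-type estimate $\|\mathcal C^*\bar\theta_{\M,1}-\Pi_0(\mathcal C^*\bar\theta_{\M,1})\|\lesssim h\,|\bar\theta_{\M,1}|_{1,2,h}$ combined with your Lipschitz bound $\|\Pi_0\bar u-\bar u\|_{L^2(\omega)}\lesssim h$ yields $|j_h'(\bar u_h)(\Pi_0\bar u-\bar u)|\lesssim h^2$. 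With this correction the quadratic inequality gives $\|\bar u-\bar u_h\|_{L^2(\omega)}\lesssim h^{2\gamma}+h\lesssim h$, and your derivations of (a) and (b) from (c) via Theorem~\ref{ee1}(iii) and Theorem~\ref{aux_dis1} then go through as you indicated.
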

\section{Reliability Analysis}\label{sec.reliability}
This section deals with the reliability analysis for the {\it a posteriori} error estimator for the optimal control problem \eqref{wform}. Let $\bT$ be the set of all admissible triangulations $\cT$. Given any $0 <\delta<1$, let $\bT(\delta)$ be the set of all triangulations $\cT$ with mesh-size $\le \delta$ for all triangles $T \in \cT$ with area $|T|$. Assume that $\omega \subset \O$ is a polygonal domain and that $\T$ restricted to $\omega$ yields a triangulation for $\omega$.

\noindent The main result of this section is stated first in Theorem \ref{thm.global.reliability}. The proof is presented at the end of this section. Define the auxiliary variable $\widetilde{u}_h$ by
\begin{align}\label{defn_post_proc}
\widetilde{{u}}_h:=\Pi_{[u_{a},u_{b}]}\big(-\frac{1}{\alpha}(\mathcal{C}^*\bar{\theta}_{\M,1})\big),
\end{align}
where $\bar{\Theta}_{\M}=(\bar{\theta}_{\M,1}, \bar{\theta}_{\M,2})$ is the discrete adjoint variable corresponding to the control $\bar{u}_h$.

\noindent For $K \in \cT$ and $E \in \cE$, define
\begin{subequations}\label{def.estimators}
\begin{align} 
&\eta^2_{K,\bar\Psi_{\rm{M}}}:= h_K^4\left(\| f+\mathcal{C}\bar u_h+[\bar \psi_{\M,1},\bar \psi_{\M,2}]\|_{L^2(K)}^2+\|[\bar \psi_{\M,1}, \bar \psi_{\M,1}]\|_{L^2(K)}^2\right), \quad \eta^2_{K,\bar u_h}:=\| \widetilde{u}_h-\bar u_h \|^2_{L^2(K)}, \label{def.est.statecontrol.volume}
\\
&\eta^2_{K,\rm{res},\bTheta_{\rm{M}}}:=h_K^4\left(\|\bar \psi_{\M,1}-\psi_{d,1} -[\bar \psi_{\M,1}, \bar \theta_{\M,2}]+ [\bar \psi_{\M,2},\bar \theta_{\M,1}]\|_{L^2(K)}^2
+\|\bar \psi_{\M,2}-\psi_{d,2}+[\bar \psi_{\M,1}, \bar \theta_{\M,1}]\|_{L^2(K)}^2\right),\label{def.est.adjoint.volume1}\\
& \eta^2_{K,\P_0,\bTheta_{\rm{M}}}:= \| D^2\bpsi_{\M,1}(1-\P_0)\btheta_{\M,2}\|^2_{{L^{2}(K)}} +\| D^2\bpsi_{\M,2}(1-\P_0)\btheta_{\M,1}\|^2_{{L^{2}(K)}}+ \| D^2\bpsi_{\M,1}(1-\P_0)\btheta_{\M,1}\|^2_{{L^{2}(K)}},\label{def.est.adjoint.volume2}\\
&\eta^2_{K,\bTheta_{\rm{M}}}:= \eta^2_{K,\rm{res},\bTheta_{\rm{M}}} + \eta^2_{K,\P_0,\bTheta_{\rm{M}}}, \label{def.est.adjoint.volume} \\
&\eta^2_{E,\bPsi_{\rm{M}}}:= h_E\left(\|[D^2 \bar \psi_{\M,1}\tau_E]_E\|_{L^2(E)}^2+\|[D^2 \bar \psi_{\M,2}\tau_E]_E\|_{L^2(E)}^2\right), \text{ and } \label{def.est.state.edge}\\
&\eta^2_{E,\bTheta_{\rm{M}}}:= h_E\left(\|[D^2 \bar \theta_{\M,1}\tau_E]_E\|_{L^2(E)}^2+\|[D^2 \bar \theta_{\M,2}\tau_E]_E\|_{L^2(E)}^2\right).\label{def.est.adjoint.edge}
\end{align}
\end{subequations}

\begin{thm}[{\bf Reliability for the control problem}]
	\label{thm.global.reliability}
	Let $(\bPsi,\bTheta,\bar{\bf u})$ (resp. $(\bPsi_{\rm{M}},\bTheta_{\rm{M}},\bar{\bf u}_h)$) solve the optimality system \eqref{opt_con} (resp.  \eqref{discrete.opt}). Then for a sufficiently small choice of the mesh size $h$, there exists an $h-$independent positive constant $C_{\rm rel}$ such that
	\begin{align}\label{global.reliability}
	\tnr{\bPsi-\bPsi_{\rm{M}}}_{\rm NC}^2+	\tnr{\bTheta-\bTheta_{\rm{M}}}_{\rm NC}^2&+\|\bar{u} - {\bar{u}}_h\|_{L^2(\omega)}^2
	\le C_{\rm rel}^2 \eta^2, \mbox{ where } \eta^2:=\eta^2_{\rm ST}+\eta^2_{\rm AD}+\eta^2_{\rm CON}, \text{ with }
	\end{align}
	$$
	\eta^2_{\rm ST}=	\sum_{K\in\cT }\eta^2_{K,\bPsi_{\rm{M}}}+\sum_{E\in\cE }\eta^2_{E,\bPsi_{\rm{M}}},~
	\eta^2_{\rm AD}:=\sum_{K\in\cT }\eta^2_{K,\bTheta_{\rm{M}}}+\sum_{E\in\cE }\eta^2_{E,\bTheta_{\rm{M}}},
~\eta^2_{\rm CON}:=\sum_{K\in\cT }\eta^2_{K,\bar u_h}.
$$
\end{thm}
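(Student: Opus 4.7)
The plan is to bound each of the three errors on the left-hand side of \eqref{global.reliability} by the estimator $\eta$, and then combine them, absorbing coupling terms by smallness of $h$. The backbone is the inf--sup stability \eqref{inf-sup_apost_psi} of $\mathcal{A}+\mathcal{B}'(\bPsi)$ together with the Lipschitz dependence of the continuous state/adjoint on the control from Theorem \ref{th2.5} and Lemma \ref{ap}. To handle nonconformity, I will rely on the companion operator $J$ from Lemma \ref{hctenrich}, the Morley interpolation $I_{\rm M}$ of Lemma \ref{Morley_Interpolation}, and the boundedness results for $B_{\rm NC}$ (Lemmas \ref{Bnc bound}, \ref{Bnc bound adjointH1error}).

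First, I would bound the control error. Using the Lipschitz (nonexpansive) nature of the pointwise projection $\Pi_{[u_a,u_b]}$ applied to the representations \eqref{rep} and \eqref{defn_post_proc} gives $\|\bar u-\widetilde u_h\|_{L^2(\omega)}\le \alpha^{-1}\|\mathcal{C}^*(\bar\theta_1-\bar\theta_{\M,1})\|_{L^2(\omega)}\lesssim \tnr{\bTheta-\bTheta_{\rm M}}_{\rm NC}$, and the triangle inequality $\|\bar u-\bar u_h\|\le \|\bar u-\widetilde u_h\|+\|\widetilde u_h-\bar u_h\|$ identifies the last term as $\eta_{\rm CON}$. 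Thus the control error is bounded by the adjoint error plus $\eta_{\rm CON}$, which will be absorbed at the end.

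Next I would bound the state error. Introduce the auxiliary continuous state $\Psi^{*}:=\Psi_{\bar u_h}$ solving \eqref{wv1} with control $\bar u_h$. Theorem \ref{th2.5} with the Lipschitz bound on $G'$ yields $\tnr{\bPsi-\Psi^{*}}_{\rm NC}\lesssim \|\bar u-\bar u_h\|_{L^2(\omega)}$. For $\tnr{\Psi^{*}-\bPsi_{\rm M}}_{\rm NC}$, the inf--sup condition \eqref{inf-sup_apost_psi} (transferred to $\Psi^{*}$ at the price of an $h^{\gamma}$-perturbation) reduces matters to estimating $\sup_{\Phi\in\bV,\ \tnr{\Phi}_2=1}\langle \mathcal{A}(\Psi^{*}-\bPsi_{\rm M})+\mathcal{B}'(\Psi^{*})(\Psi^{*}-\bPsi_{\rm M}),\Phi\rangle$. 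Writing this residual as the continuous bilinear/trilinear expression acting on $\Phi$, subtracting the discrete equation \eqref{state_eq1} tested with $I_{\rm M}\Phi$, and using $J$ to connect Morley and conforming quantities, produces the two standard Morley contributions: a volume residual $f+\mathcal{C}\bar u_h+[\bar\psi_{\M,1},\bar\psi_{\M,2}]$ (and its analogue for $\psi_2$), yielding $\eta^{2}_{K,\bPsi_{\rm M}}$, together with edge jumps of $D^{2}\bar\Psi_{\rm M}$ from the nonconformity, yielding $\eta^{2}_{E,\bPsi_{\rm M}}$; Lemmas \ref{Morley_Interpolation}, \ref{hctenrich}, \ref{Anc.bound} give $h^{4}$ (resp. $h_E$) scaling that matches \eqref{def.est.statecontrol.volume}, \eqref{def.est.state.edge}. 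Higher-order nonlinear cross terms are controlled by Lemma \ref{Bnc bound adjointH1error} and absorbed for $h$ small.

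The adjoint error is the analogue, but with the extra lower-order term $2\mathcal{B}'(\bPsi)$ and the fact that the coefficient of the trilinear form is now the discrete state. I would set $\Theta^{*}:=\Theta_{\bar u_h}$ solving \eqref{auxiadje_12} with state $\Psi^{*}$, get $\tnr{\bTheta-\Theta^{*}}_{\rm NC}\lesssim \tnr{\bPsi-\Psi^{*}}_{\rm NC}+\|\bPsi-\Psi^{*}\|$ by the well-posedness of the linearized adjoint (same inf--sup, by duality), and then bound $\tnr{\Theta^{*}-\bTheta_{\rm M}}_{\rm NC}$ via a residual argument testing with conforming $\Phi$ of unit energy norm, subtracting \eqref{adj_eq1} tested with $I_{\rm M}\Phi$, and invoking $J$. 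The volume residual gives $\eta^{2}_{K,\rm{res},\bTheta_{\rm M}}$; because the adjoint trilinear form uses $\bTheta_{\rm M}\notin \bV$, the identity $\int_{T}D^{2}\bar\psi_{\M,i}:(1-\P_0)\bar\theta_{\M,j}\,dx$ terms appear through the commutator with $I_{\rm M}$ and the jump-free projection, producing $\eta^{2}_{K,\P_0,\bTheta_{\rm M}}$; edge jumps of $D^{2}\bTheta_{\rm M}$ produce $\eta^{2}_{E,\bTheta_{\rm M}}$. The scalings in \eqref{def.est.adjoint.volume1}--\eqref{def.est.adjoint.edge} follow from Lemmas \ref{Morley_Interpolation}, \ref{hctenrich}. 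The hard step will be the mixed coupling $B_{\rm NC}(\Psi^{*}-\bPsi_{\rm M},\Phi,\Theta^{*}-\bTheta_{\rm M})$, which I will dominate using Lemmas \ref{Bnc bound}(c)--(d) and the regularity $\Psi^{*}\in\bH^{2+\gamma}(\Omega)$ so that the contribution carries a factor of either $h^{\gamma}$ or $\tnr{\bPsi-\bPsi_{\rm M}}_{\rm NC}$, permitting absorption.

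Finally, I would combine the three bounds: the state estimate contributes $\eta_{\rm ST}$ and a $\|\bar u-\bar u_h\|$ term; the adjoint estimate contributes $\eta_{\rm AD}$ and a state-error term; the control estimate contributes $\eta_{\rm CON}$ and an adjoint-error term. For $h$ small enough, the small-coefficient cross terms are absorbed into the left-hand side, yielding $\tnr{\bPsi-\bPsi_{\rm M}}_{\rm NC}^{2}+\tnr{\bTheta-\bTheta_{\rm M}}_{\rm NC}^{2}+\|\bar u-\bar u_h\|^{2}_{L^{2}(\omega)}\le C_{\rm rel}^{2}\eta^{2}$. The main obstacle is the last step of disentangling the three couplings; smallness of $h$ and the uniform inf--sup constant $\beta$ from \eqref{inf-sup_apost_psi} are both essential.
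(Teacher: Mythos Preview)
Your state and adjoint steps are essentially the paper's Theorems~\ref{thm.state.reliability} and~\ref{thm.adjoint.reliability}: introduce an auxiliary continuous solution with the discrete control, use the inf--sup \eqref{inf-sup_apost_psi}, and pass through $J$ and $I_{\rm M}$ to generate the volume, $\P_0$, and edge contributions. That part is fine.

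The gap is in your treatment of the control error and the final absorption. From the projection representation you get $\|\bar u-\widetilde u_h\|_{L^2(\omega)}\le \alpha^{-1}\tnr{\bTheta-\bTheta_{\rm M}}$, hence $\|\bar u-\bar u_h\|\lesssim \tnr{\bTheta-\bTheta_{\rm M}}_{\rm NC}+\eta_{\rm CON}$. Combined with your state bound $\tnr{\bPsi-\bPsi_{\rm M}}_{\rm NC}\lesssim \eta_{\rm ST}+\|\bar u-\bar u_h\|$ and your adjoint bound $\tnr{\bTheta-\bTheta_{\rm M}}_{\rm NC}\lesssim \eta_{\rm ST}+\eta_{\rm AD}+\|\bar u-\bar u_h\|$, you obtain a linear cycle
\[
\|\bar u-\bar u_h\|\ \lesssim\ \eta+\|\bar u-\bar u_h\|
\]
in which the implicit constant in front of $\|\bar u-\bar u_h\|$ on the right is a product of $\alpha^{-1}$, the Lipschitz constant of $G$ from Theorem~\ref{th2.5}, and fixed embedding constants. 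None of these is $o(1)$ as $h\to 0$, so the ``absorb for $h$ small'' step does not close. Smallness of $h$ helped only inside each individual estimate (e.g.\ the quadratic term $B_{\rm NC}(\Psi^{*}-\bPsi_{\rm M},\Phi,\Theta^{*}-\bTheta_{\rm M})$), not in this final linear coupling.

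The paper breaks the circularity in Theorem~\ref{thm.control.reliability} by invoking the second-order sufficient optimality condition through Lemma~\ref{lem.auxiliary.control}: this gives $\tfrac{\delta_1}{2}\|\bar u-\widetilde u_h\|^2\le (j'(\widetilde u_h)-j'(\bar u))(\widetilde u_h-\bar u)$, which together with \eqref{opt3} and the auxiliary optimality \eqref{opt_con_ppt} yields $\|\bar u-\widetilde u_h\|\lesssim \tnr{\widetilde\Theta-\bTheta_{\rm M}}$, where $\widetilde\Theta$ is the continuous adjoint at the \emph{computable} control $\widetilde u_h$. Splitting through the auxiliary adjoint $\hTheta$ of \eqref{auxiadje_1}, the piece $\tnr{\hTheta-\bTheta_{\rm M}}$ is bounded by $\eta_{\rm AD}+\eta_{E,\bPsi_{\rm M}}$ as in \eqref{adj.t1.final}, while $\tnr{\widetilde\Theta-\hTheta}$ is bounded via $\mathcal{F}_{\widetilde\Psi}^{-1}$ and the state reliability, producing a control-dependence only through $\|\widetilde u_h-\bar u_h\|=\eta_{\rm CON}$, \emph{not} through $\|\bar u-\bar u_h\|$. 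This bounds the control error directly by $\eta$, after which the state and adjoint bounds follow without any cycle. Your argument needs this ingredient (or an equivalent one) to close.
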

\subsection{A posteriori error analysis for the state equations}
 Let $(\bPsi,\bar u)$ be a regular solution to \eqref{wform} and $\hPsi \in \bV$ solves the auxiliary state equation 
 \begin{align}\label{state.aux}
 {A}(\hPsi,\Phi) +{B}(\hPsi, \hPsi,\Phi) = ( {\bf F}+{\bf C} \bar{\bf u}_h,\Phi )  \mbox{ for all } \Phi \in \bV,
 \end{align}
where $\bar{\bf u}_h=(\bar{ u}_h,0)^T$ is the discrete control in \eqref{discrete.opt}. Since $\bar \Psi$ is a regular solution, for a sufficiently small choice of the mesh size $h$,  $\bar u_h \in {\mathcal O}(\bar u)$ from  Theorem \ref{conv.apriori}.$(c)$ and hence
Theorem \ref{th2.5} yields $\hPsi$ is regular.  That is, 
\begin{align}\label{inf-sup_apost}
0<\hat{{\beta}}:=\inf_{\substack{\bxi\in \bV\\ \trinl\bxi\trinr_2=1}}\sup_{\substack{\Phi\in \bV\\ \trinl\Phi\trinr_2=1}}DN(\hPsi;\bxi,\Phi). 
\end{align}
Note that $\hPsi$ solves the \vket \eqref{state.aux} and its Morley FE approximation seeks $\bPsi_\M$ given by \eqref{state_eq1}. Let $C_b$ denotes the boundedness constant of $B_\NC(\bullet,\bullet,\bullet)$ absorbed in $`\lesssim'$ of Lemma \ref{Bnc bound}.$(b)$. Suppose $\varepsilon, \delta >0$  are chosen smaller such that, for any $\cT \in \bT(\delta)$, exactly
one discrete solution $(\bPsi_{\rm{M}},\bTheta_{\rm{M}},\bar{\bf u}_h)$ solve the optimality system \eqref{discrete.opt} such that Remark~\ref{rem.state}.$(a)$ and Theorem \ref{conv.apriori}.$(a)-(b)$  hold with $\varepsilon \le \min\{ {\hat{\beta}}/({2C_b(1+\Lambda_\jc))}, \beta/(4C_b)\}$ and 
\begin{align}\label{varepsilon}
\|\hPsi - \bPsi_{ \M}\|_{\NC}+\|\bPsi - \bPsi_{ \M}\|_{\NC}+\|\bTheta - \bTheta_{ \M}\|_{\NC} \le \varepsilon,
\end{align} 
where $\bTheta$ solves \eqref{adj_eq} and $\beta$ (resp. $\hat{\beta}$) is the inf-sup constant in \eqref{inf-sup_apost_psi} (resp. \eqref{inf-sup_apost}).

 \begin{thm}[Reliability for the state variable]\label{thm.state.reliability}
 Let $(\bar{\Psi},\bar{u})$ $\in \bV\times L^2(\omega)$ be a regular solution to \eqref{wform} and $({\bar \Psi}_{\rm{M}},   {\bar u}_h)$ $\in \bV_{\rm M} \times U_{h,ad}$ solves \eqref{discrete_cost}. 
 Then for a sufficiently small choice of mesh-size $h$, there exists an $h-$independent positive constant $C_{\rm ST, rel}$ such that
  \begin{equation}\label{state.reliability}
 	 \tnr{\bar{\Psi}-\bPsi_{\rm{M}}}_{\rm{NC}}^2\le C_{\rm ST, rel}^2 \bigg(\sum_{K\in\cT }\eta^2_{K,\bPsi_{\rm{M}}}+\sum_{E\in\cE }\eta^2_{E,\bPsi_{\rm{M}}}+\|\bar u - \bar {u}_h \|_{L^2(\omega)}^2\bigg).
 	 \end{equation}
 \end{thm}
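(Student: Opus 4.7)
My plan is to introduce the auxiliary regular solution $\widehat{\Psi}\in\bV$ of \eqref{state.aux} driven by the \emph{discrete} control $\bar u_h$, and split
\[
\tnr{\bar\Psi-\bPsi_{\rm M}}_{\rm NC}\le \tnr{\bar\Psi-\widehat\Psi}_{\rm NC}+\tnr{\widehat\Psi-\bPsi_{\rm M}}_{\rm NC}.
\]
The first term measures the continuous dependence of the state on the control, while the second is the purely discrete a posteriori error with data frozen.

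\textbf{Step 1 (control-to-state stability).} Subtract \eqref{state_eq} from \eqref{state.aux} to obtain $\mathcal{A}(\bar\Psi-\widehat\Psi)+\mathcal{B}(\bar\Psi)-\mathcal{B}(\widehat\Psi)=\mathbf{C}(\bar{\mathbf{u}}-\bar{\mathbf{u}}_h)$. Writing $\mathcal B(\bar\Psi)-\mathcal B(\widehat\Psi)=\mathcal B'(\bar\Psi)(\bar\Psi-\widehat\Psi)+\mathcal R$ with a quadratic remainder $\mathcal R=\mathcal O(\tnr{\bar\Psi-\widehat\Psi}_{\rm NC}^2)$ via Lemma~\ref{Bnc bound}.(b), I apply the inf-sup condition \eqref{inf-sup_apost_psi} at $\bar\Psi$ and absorb the quadratic term using the smallness assumption \eqref{varepsilon} on $\varepsilon \le \beta/(4C_b)$. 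This yields $\tnr{\bar\Psi-\widehat\Psi}_{\rm NC}\lesssim \|\bar u-\bar u_h\|_{L^2(\omega)}$.

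\textbf{Step 2 (discrete a posteriori for the state).} I insert the companion operator to split $\widehat\Psi-\bPsi_{\rm M}=(\widehat\Psi-J\bPsi_{\rm M})+(J\bPsi_{\rm M}-\bPsi_{\rm M})$. The nonconforming part $\tnr{J\bPsi_{\rm M}-\bPsi_{\rm M}}_{\rm NC}^2$ is controlled by $\sum_E\eta_{E,\bPsi_{\rm M}}^2$ via Lemma~\ref{hctenrich}.(e). For the conforming part $\widehat\Psi-J\bPsi_{\rm M}\in\bV$, I apply the inf-sup condition \eqref{inf-sup_apost} at $\widehat\Psi$:
\[
\widehat\beta\,\tnr{\widehat\Psi-J\bPsi_{\rm M}}_2\le \sup_{\Phi\in\bV,\,\tnr{\Phi}_2=1} DN(\widehat\Psi;\widehat\Psi-J\bPsi_{\rm M},\Phi).
\]
Since $N(\widehat\Psi;\Phi)=0$, Taylor expansion converts the right-hand side to $-N(J\bPsi_{\rm M};\Phi)$ modulo a quadratic term in $\tnr{\widehat\Psi-J\bPsi_{\rm M}}_{\rm NC}$ (again absorbed via $\varepsilon\le \widehat\beta/(2C_b(1+\Lambda_{\rm J}))$). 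Then I add and subtract the discrete equation \eqref{state_eq1} tested with $I_{\rm M}\Phi$, so that $N(J\bPsi_{\rm M};\Phi)$ decomposes into three pieces: (i) $A(J\bPsi_{\rm M},\Phi)-A_{\rm NC}(\bPsi_{\rm M},I_{\rm M}\Phi)$, (ii) $B(J\bPsi_{\rm M},J\bPsi_{\rm M},\Phi)-B_{\rm NC}(\bPsi_{\rm M},\bPsi_{\rm M},I_{\rm M}\Phi)$, and (iii) the data oscillation $(\mathbf F+\mathbf C\bar{\mathbf u}_h,\Phi-I_{\rm M}\Phi)$. Using Lemma~\ref{Morley_Interpolation}.(a), integration by parts on each triangle, and the standard trace/scaling and bubble-function machinery, pieces (i) and (iii) together with the nonlinear residual in (ii) produce the volume estimators $\sum_K\eta_{K,\bPsi_{\rm M}}^2$ and edge jump estimators $\sum_E\eta_{E,\bPsi_{\rm M}}^2$.

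\textbf{Main obstacle.} The delicate point is the trilinear piece (ii): $B(J\bPsi_{\rm M},J\bPsi_{\rm M},\Phi)-B_{\rm NC}(\bPsi_{\rm M},\bPsi_{\rm M},I_{\rm M}\Phi)$. Expanding as $B_{\rm NC}(J\bPsi_{\rm M}-\bPsi_{\rm M},J\bPsi_{\rm M},\Phi)+B_{\rm NC}(\bPsi_{\rm M},J\bPsi_{\rm M}-\bPsi_{\rm M},\Phi)+B_{\rm NC}(\bPsi_{\rm M},\bPsi_{\rm M},\Phi-I_{\rm M}\Phi)$, the first two terms are handled by Lemma~\ref{Bnc bound} together with the bound on $J\bPsi_{\rm M}-\bPsi_{\rm M}$ from Lemma~\ref{hctenrich}.(e) (already controlled by $\eta_{E,\bPsi_{\rm M}}$), while the third requires elementwise integration by parts to produce volume and edge-jump residuals matching $\eta_{K,\bPsi_{\rm M}}$ and $\eta_{E,\bPsi_{\rm M}}$. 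Collecting and combining Steps~1 and~2 delivers \eqref{state.reliability}, with the smallness of $\varepsilon$ ensuring all nonlinear cross-terms are absorbed into the left-hand side.
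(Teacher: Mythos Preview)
Your overall architecture matches the paper's: split via $\widehat\Psi$, use the inf-sup at $\widehat\Psi$ on the conforming part $\widehat\Psi-J\bPsi_{\rm M}$ together with a Taylor expansion of $N$, and control the nonconforming part $(1-J)\bPsi_{\rm M}$ by the edge estimator via Lemma~\ref{hctenrich}.(e). Two points deserve comment.

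First, your Step~1 is a valid alternative to the paper's argument. The paper does \emph{not} use the inf-sup at $\bar\Psi$ with quadratic-remainder absorption; instead it invokes the implicit function theorem (Theorem~\ref{th2.5}) and the mean value theorem, writing $\bar\Psi-\widehat\Psi=\int_0^1 G'(u_t)(\mathbf C(\bar{\mathbf u}-\bar{\mathbf u}_h))\,dt$ with $G'(u_t)=(\mathcal A+\mathcal B'(\Psi_t))^{-1}$ uniformly bounded. Your approach works too (since $\tnr{\bar\Psi-\widehat\Psi}_2\le 2\varepsilon$ from \eqref{varepsilon} allows absorption), but the paper's route avoids re-deriving the local invertibility that Theorem~\ref{th2.5} already packages.

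Second, your Step~2 residual estimation is overcomplicated and partly misdirected. Bubble functions are an \emph{efficiency} tool and play no role in reliability. More importantly, no elementwise integration by parts is needed anywhere. The paper's decomposition of $N(J\bPsi_{\rm M};\Phi)$ (your pieces (i)--(iii)) yields $S_2:=A_{\rm NC}(\bPsi_{\rm M},(1-I_{\rm M})\Phi)=0$ directly from Lemma~\ref{Morley_Interpolation}.(a), and the crucial combination
\[
S_5+S_6 = B_{\rm NC}(\bPsi_{\rm M},\bPsi_{\rm M},(1-I_{\rm M})\Phi)-(\mathbf F+\mathbf C\bar{\mathbf u}_h,(1-I_{\rm M})\Phi)
\]
is bounded by a plain Cauchy--Schwarz inequality and the $L^2$ approximation $\|(1-I_{\rm M})\Phi\|_{L^2(T)}\le C_I h_T^2|\Phi|_{H^2(T)}$ from Lemma~\ref{Morley_Interpolation}.(b), giving exactly $\sum_K\eta_{K,\bPsi_{\rm M}}^2$ with \emph{no} edge contribution from this piece. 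The remaining terms $S_1,S_3,S_4$ are all bounded by $\tnr{(J-1)\bPsi_{\rm M}}_{\rm NC}$ (times a uniform constant $M_1$ bounding $\tnr{J\bPsi_{\rm M}}_2+\tnr{\bPsi_{\rm M}}_{\rm NC}$), which Lemma~\ref{hctenrich}.(e) converts to the edge estimator. So your ``main obstacle'' dissolves once you combine $S_5$ with the load term rather than treating it in isolation.
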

 \begin{proof} The proof adapts the ideas of  \cite{carstensen2017nonconforming} for the control problem. The terms $\tnr{\widehat{\Psi}-\bPsi_\M}_{\rm NC}$ and $\tnr{\bar{\Psi}-\hPsi}_{2}$ are estimated and then a triangle inequality completes the proof.
The inf-sup condition \eqref{inf-sup_apost} implies that for any  $0<\epsilon_1<\hat{\beta}$ there exists some $\Phi\in \bV$ with $\trinl\Phi\trinr_2=1$ and
\begin{align}\label{inf_sup_epsilon}
(\hat{\beta}-\epsilon_1)\trinl \hPsi-J\bPsi_\M\trinr_2\leq DN(\hPsi; \hPsi-J\bPsi_\M,\Phi).
\end{align}
Since $N(\bullet)$ is quadratic,  the finite Taylor series is exact  and hence 
\begin{align*}
N( J\bPsi_\M;\Phi)=DN(\hPsi;J\bPsi_\M-\hPsi,\Phi)+\half D^2N(\hPsi; J\bPsi_\M-\hPsi, J\bPsi_\M-\hPsi,\Phi).
\end{align*}
This with $D^2N(\hPsi;\hPsi-J\bPsi_\M,\hPsi-J\bPsi_\M,\Phi)=2B(\hPsi-J\bPsi_\M,\hPsi-J\bPsi_\M,\Phi)$,  \eqref{inf_sup_epsilon} and  Lemma \ref{Bnc bound}.$(b)$ show
\begin{align}
(\hat{\beta}-\epsilon_1)\trinl \hPsi-J\bPsi_\M\trinr_2&
\leq |N(J\bPsi_\M;\Phi)|+C_b\trinl \hPsi-J\bPsi_\M\trinr_2^2.\label{est.state}
\end{align}
 A triangle inequality, \eqref{varepsilon}, Lemma \ref{hctenrich}.$(d)$ with $v=\hPsi$ and $\varepsilon \le \hat{\beta}/(2C_b(1+\Lambda_\jc))$ imply 
\begin{equation}\label{est.1}
\trinl \hPsi-J\bPsi_\M \trinr_{\NC}\leq \trinl \hPsi-\bPsi_\M \trinr_\NC+\trinl (1-J)\bPsi_\M\trinr_{\NC} \le (1+\Lambda_\jc)\varepsilon\le {
\hat{\beta}}/{2C_b}.
\end{equation}
With $\epsilon_1\searrow 0$, \eqref{est.state} and \eqref{est.1} result in
$
\frac{\hat{\beta}}{2}\trinl \hPsi-J\bPsi_\M\trinr_2\leq |N(J\bPsi_\M;\Phi)|.
$
This eventually shows that
\begin{align}\label{app_thm}
\trinl\hPsi-\bPsi_{\M}\trinr_{\NC}\le 2 \hat{\beta}^{-1} |N(J\bPsi_{\M};\Phi)|+\trinl (J-1)\bPsi_{\M}\trinr_{\NC}.
\end{align}
%
The definition of $N(\bullet)$, \eqref{state_eq1}  and rearrangements lead to 
\begin{align}\label{s1.6} & N(J\bPsi_{\M};\Phi)=A(J\bPsi_{\M},\Phi)+B(J\bPsi_{\M},J\bPsi_{\M},\Phi)-({\bf F} +{\mathbf C}{\bf u}_h,\Phi)\notag\\
&\quad =A_{\NC}((J-1)\bPsi_{\M},\Phi)+A_\NC(\bPsi_{\M},(1-I_{\M})\Phi)+B_\NC((J-1)\bPsi_{\M},J\bPsi_{\M},\Phi)\notag\\
&\; \; \quad+B_{\NC}(\bPsi_{\M},(J-1)\bPsi_{\M},\Phi) +B_\NC(\bPsi_{\M},\bPsi_{\M}, (1-I_{\M})\Phi)-({\bf F} +{\mathbf C}{\bf u}_h, (1-I_{\M})\Phi)
=: \sum_{i=1}^{6}S_i.
\end{align}
A Cauchy-Schwarz inequality proves $S_1 \le\trinl (J-1)\bPsi_{\M}\trinr_{\NC}.$ Since the piece-wise second derivatives of $\bPsi_\M$ are constants, Lemma \ref{Morley_Interpolation}.$(a)$ implies $S_2=0$. The triangle inequalities, Lemma \ref{hctenrich}.$(d)$ with $v=\hPsi$, \eqref{varepsilon} and Lemma \ref{ap}.$(a)$ prove
\begin{align}\label{dissta}
\|J \bPsi_\M\|_2 + \|\bPsi_\M\|_{\NC} &\le \|(J-1) \bPsi_\M\|_{\NC}+ 2(\|\hPsi-\bPsi_\M \|_{\NC} +\|\hPsi\|_2) 
\le (2+\Lambda_\jc)\varepsilon+2\|\bPsi\|_2 :=M_1.
\end{align}
  Lemma \ref{Bnc bound}.$(b)$ and \eqref{dissta} show $S_3+S_4 \le C_bM_1 \trinl (J-1)\bPsi_{\M}\trinr_{\NC}$. The definition of $B_\NC(\bullet,\bullet,\bullet)$, a Cauchy-Schwarz inequality and Lemma \ref{Morley_Interpolation}.$(b)$ prove
$S_5+S_6 \le C_{I} \sum_{K\in\cT }h_K^2\left(\| f+\mathcal{C}u_h+[\bar \psi_{\M,1},\bar \psi_{\M,2}]\|_{L^2(K)}+\|[\bar \psi_{\M,1}, \bar \psi_{\M,1}]\|_{L^2(K)}\right).$ A substitution of $S_1$-$S_6$ in \eqref{s1.6} and then in \eqref{app_thm} with Lemma \ref{hctenrich}.$(e)$, the definitions \eqref{def.est.statecontrol.volume} and \eqref{def.est.state.edge} result in
	\begin{align}\label{staterel_estimator.t1}
	\trinl\hPsi-\bPsi_{\M}\trinr_{\NC}^2& \leq \tilde{C}_{\rm ST, rel}^2\big(\sum_{K\in\cT }\eta^2_{K,\bPsi_{\rm{M}}}+\sum_{E\in\cE }\eta^2_{E,\bPsi_{\rm{M}}}\big),
	\end{align}
with the constant $\tilde{C}_{\rm ST, rel}^2:=C_J^2\left(1+2\hat{\beta}^{-1}(1+C_bM_1)\right)^2+4\hat{\beta}^{-2}C_I^2$. Theorem \ref{th2.5} for \eqref{state_eq} and \eqref{state.aux} yield $G(\bar u)=\bPsi$, $G(\bar u_h)=\hPsi$. Also, if $ G'({u}){v} =: \mathbf{z}_{{v}} \in
 \bV$, then $\mathbf{z}_{v}$ satisfies
${\mathcal A}\mathbf{z}_{{v}} +
 {\mathcal B}'(\Psi)\mathbf{z}_{{v}} =
 {\mathbf C}\mathbf{v}\quad \mathrm{in\ } \bV',$
 where $\Psi=G(u)$ and $u$, $v$ belong to the interior of $ {\mathcal O}({\bar u})$. 
Theorem \ref{th2.5} proves the uniform boundedness of $\trinl (\mathcal{A}+\mathcal{B}'(\Psi_{u}))^{-1}\trinr_{\mathcal{L}(\bV',\bV)}$ whenever $u\in {\mathcal O}({\bar u})$.
 Hence, for $u_t= {\bar u_h} + t(\bar u- \bar{u}_h)$ and $\Psi_{t}=G(u_{t})$, mean value theorem, Theorem \ref{th2.5} 
 and  $\bar u_h \in {\mathcal O}({\bar u})$ show
 \begin{align*} 
 & \trinl \bPsi- \hPsi \trinr_2  
 =\trinl { \int_{0}^1} G'(u_t)({\mathbf C}(\bar{ \bf u}-\bar{\bf {u}}_h)))  \: {\rm dt} \trinr_{2} 
 =  \trinl { \int_{0}^1} (\mathcal{A}+\mathcal{B}'(\Psi_{t}))^{-1}({\mathbf C}(\bar {\bf u}-\bar {\bf {u}}_h))  \: {\rm dt} \trinr_{2} \lesssim \|\bar u - \bar {u}_h \|_{L^2(\omega)}.
 \end{align*}
 \noindent A combination of \eqref{staterel_estimator.t1} and the last displayed result  with a triangle inequality concludes the proof.
\end{proof}
\subsection{A posteriori error analysis for the adjoint equations}	\label{sec:apost_adj}
\noindent The auxiliary problem that corresponds to the adjoint equations seeks $\hTheta\in \bV$ such that
\begin{align} \label{auxiadje_1}
{A}(\Phi, \hTheta) +2{B}_\NC(\bar\Psi_\M, \Phi, \hTheta)=(\bar \Psi_{\M}-\Psi_{d},\Phi) \mbox{ for all } \Phi\in \bV,
\end{align}
where  $\bar\Psi_\M \in \bV_\M$ is the solution to \eqref{state_eq1}. 
%
 Since $\bPsi$ is a regular solution to \eqref{wform}, the adjoint of the operator in \eqref{inf-sup_apost_psi} satisfies the inf-sup condition given by 
\begin{equation}\label{inf_suP_adjoint}
	{\beta}=\inf_{\substack{\bxi\in \bV\\ \trinl\bxi\trinr_2=1}}\sup_{\substack{\Phi\in \bV\\ \trinl\Phi\trinr_2=1}}\langle  {\mathcal A}{\Phi}+{\mathcal B}'(\bPsi){\Phi},{\boldsymbol \xi}\rangle, \quad \trinl \bTheta \trinr_2 \le {\beta}^{-1}\trinl \bPsi-\Psi_d\trinr
\end{equation}
with the last inequality derived from \eqref{adj_eq}. An introduction of $\bPsi$, the first inequality of \eqref{inf_suP_adjoint}, Lemma \ref{Bnc bound}.$(b)$, \eqref{varepsilon} and $\varepsilon \le \beta/(4C_b)$ show that for any  $0 <\epsilon_2<{\beta}$, there exists some $\Phi\in \bV$ with $\trinl\Phi\trinr_2=1$ such that
\begin{align}
{A}(\Phi, \hTheta) +&2{B}_\NC(\bar\Psi_\M, \Phi, \hTheta)
={A}(\Phi, \hTheta) +2{B}(\bPsi, \Phi, \hTheta)+2{B}_\NC(\bar\Psi_\M-\bPsi, \Phi, \hTheta) \nonumber \\
& \ge  ({\beta}-\epsilon_2-2C_b\trinl \bar\Psi_\M-\bPsi\trinr_\NC) \trinl\hTheta\trinr_2\ge ({\beta}-2C_b\varepsilon)\trinl\hTheta\trinr_2 \ge \frac{\beta}{2}\trinl\hTheta\trinr_2\label{thetahat.infsup}
\end{align}
with $\epsilon_2\searrow 0$ in the second last step of the inequality above. 
This shows the wellposedness of \eqref{auxiadje_1}. A combination of \eqref{auxiadje_1} and \eqref{thetahat.infsup} leads to a bound for the solution of $\hTheta $ of \eqref{auxiadje_1} as
\begin{equation}\label{apriori.discreteadj1}
\trinl \hTheta \trinr_{2} \le 2{\beta}^{-1}\trinl  \bPsi_{\rm{M}} -\Psi_d\trinr_{}.
\end{equation}
\medskip
\noindent 
For $\Psi, \Phi \in \bV + \bV_\M$, define {\it linear operators}  $\mathcal{F}_{\Psi}$ and $\mathcal{F}_{\Psi,\NC}\in \mathcal{L}(\bV+\bV_{\M})$  by
\begin{align}\label{adjlinop}
\mathcal{F}_{\Psi}(\Phi) = \Phi+ T[{\mathcal B}_{\NC}'(\Psi)^* (\Phi)] \text{ and } \mathcal{F}_{\Psi,\NC}(\Phi) = \Phi+T_\NC[{\mathcal B}'_{\NC}(\Psi)^* (\Phi)],
\end{align}
where ${\mathcal B}'_\NC(\Psi)^*$ is the adjoint operator corresponding to  ${\mathcal B}'_{\NC}(\Psi)$ and the bounded linear operator $T(\bullet)$ (resp. $T_\NC(\bullet)$) solves the biharmonic system of equations in the sense that for the load $\bg \in \bV'$ (resp. $\bg \in \bV_\M'$), $A(T\bg,\Phi)=\langle\bg,\Phi\rangle$ for all $\Phi \in \bV$ (resp. $A_\NC(T_\NC \bg,\Phi_\M)=\langle\bg,\Phi_\M\rangle$ for all $\Phi_\M \in \bV_\M$). A detailed discussion of these operators is provided in Appendix.

\noindent  The next lemma  (proved in Appendix) is utilized in the proof of Theorem \ref{thm.adjoint.reliability}. 
\begin{lem}[\it Uniform boundeness of $\mathcal{F}^{-1}_{\Psi_u}$]\label{automorphism}  
	If  $\bar \Psi \in \bV $ is a regular solution to \eqref{wform}, then $\mathcal{F}_{\Psi_u}$ is an automorphism on $\bV+\bV_{\rm M}$,  whenever $u$ is sufficiently close to $\bar u$. Moreover,  $\|\mathcal{F}_{\Psi_u}^{-1}\|_{\mathcal{L}(\bV+\bV_{\rm M})}\lesssim 1+\|(\mathcal{A}+\mathcal{B}
		'(\bar{\Psi}))^{-1}\|_{\mathcal{L}(\bV',\bV)}\trinl\bar\Psi\trinr_{2}$. 
\end{lem}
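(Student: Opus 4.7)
The plan is to reduce the question of invertibility on the larger space $\bV+\bV_{\rm M}$ to a perturbation argument on the smaller space $\bV$, exploiting the fact that the range of $T$ lies in $\bV$. The first step is to inspect $\mathcal{F}_{\Psi_u}$ restricted to $\bV$. For $\Phi\in\bV$ the nonconforming form coincides with the conforming one, so $\mathcal{B}'_{\NC}(\Psi_u)^{*}(\Phi)=\mathcal{B}'(\Psi_u)^{*}(\Phi)\in \bV'$. Applying $\mathcal{A}$ to the identity $\mathcal{F}_{\Psi_u}(\Phi)=\Phi+T[\mathcal{B}'_{\NC}(\Psi_u)^{*}(\Phi)]$ and using $\mathcal{A}\,T=\mathrm{Id}_{\bV'}$ yields
\begin{equation*}
\mathcal{A}\,\mathcal{F}_{\Psi_u}\big|_{\bV}=\mathcal{A}+\mathcal{B}'(\Psi_u)^{*}\quad\text{as operators } \bV\to \bV'.
\end{equation*}
Hence $\mathcal{F}_{\Psi_u}|_{\bV}=\mathcal{A}^{-1}(\mathcal{A}+\mathcal{B}'(\Psi_u)^{*})$.

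Next, I would argue that $\mathcal{A}+\mathcal{B}'(\Psi_u)^{*}:\bV\to \bV'$ is an isomorphism for $u$ close to $\bar u$. Since $\bar\Psi$ is a regular solution, \eqref{inf-sup_apost_psi} together with a standard adjoint argument shows $\mathcal{A}+\mathcal{B}'(\bar\Psi)^{*}$ is an isomorphism with the same inverse norm as $\mathcal{A}+\mathcal{B}'(\bar\Psi)$. The continuity of $u\mapsto \Psi_u$ (Theorem~\ref{th2.5} / Lemma~\ref{ap}$(c)$) together with the boundedness of $\mathcal{B}'$ in its argument then gives $\|\mathcal{B}'(\Psi_u)^{*}-\mathcal{B}'(\bar\Psi)^{*}\|_{\mathcal{L}(\bV,\bV')}\to 0$ as $u\to\bar u$, so a Neumann-series perturbation yields invertibility of $\mathcal{A}+\mathcal{B}'(\Psi_u)^{*}$ with
\begin{equation*}
\|(\mathcal{A}+\mathcal{B}'(\Psi_u)^{*})^{-1}\|_{\mathcal{L}(\bV',\bV)}\lesssim \|(\mathcal{A}+\mathcal{B}'(\bar\Psi))^{-1}\|_{\mathcal{L}(\bV',\bV)}.
\end{equation*}
Consequently $\mathcal{F}_{\Psi_u}|_{\bV}$ is an automorphism of $\bV$ with $\|\mathcal{F}_{\Psi_u}|_{\bV}^{-1}\|\lesssim \|(\mathcal{A}+\mathcal{B}'(\bar\Psi))^{-1}\|_{\mathcal{L}(\bV',\bV)}\,\|\mathcal{A}\|_{\mathcal{L}(\bV,\bV')}$, and the last factor equals one in the energy norm.

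To extend to $\bV+\bV_{\rm M}$, given $\Xi\in\bV+\bV_{\rm M}$ I would seek $\Phi$ such that $\mathcal{F}_{\Psi_u}(\Phi)=\Xi$. Because $T[\mathcal{B}'_{\NC}(\Psi_u)^{*}(\Phi)]\in \bV$, necessarily $\Phi-\Xi\in \bV$; so write $\Phi=\Xi+\widetilde\Phi$ with $\widetilde\Phi\in \bV$ and substitute to obtain the $\bV$-valued equation
\begin{equation*}
\mathcal{F}_{\Psi_u}(\widetilde\Phi)=-T[\mathcal{B}'_{\NC}(\Psi_u)^{*}(\Xi)].
\end{equation*}
The right-hand side lies in $\bV$ and the first step provides a unique solution $\widetilde\Phi\in\bV$, proving bijectivity on $\bV+\bV_{\rm M}$. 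The bound on $\|\mathcal{F}_{\Psi_u}^{-1}\|_{\mathcal{L}(\bV+\bV_{\rm M})}$ is obtained from
\begin{equation*}
\|\Phi\|_{\bV+\bV_{\rm M}}\le \|\Xi\|_{\bV+\bV_{\rm M}}+\|\mathcal{F}_{\Psi_u}|_{\bV}^{-1}\|\,\|T\|_{\mathcal{L}(\bV',\bV)}\,\|\mathcal{B}'_{\NC}(\Psi_u)\|\,\|\Xi\|,
\end{equation*}
combined with $\|T\|=1$, the boundedness $\|\mathcal{B}'_{\NC}(\Psi_u)\|\lesssim \trinl\Psi_u\trinr_{\NC}\lesssim \trinl\bar\Psi\trinr_{2}$ (from \eqref{B.derivative} and Lemma~\ref{Bnc bound}$(b)$), and the bound on $\mathcal{F}_{\Psi_u}|_{\bV}^{-1}$.

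The main obstacle will be the careful bookkeeping of dualities, since $\mathcal{B}'_{\NC}(\Psi_u)^{*}$ acts between $\bV+\bV_{\rm M}$ and its dual while $T$ is only guaranteed to send $\bV'$ into $\bV$; one must verify that the composition $T\circ\mathcal{B}'_{\NC}(\Psi_u)^{*}$ is well defined on all of $\bV+\bV_{\rm M}$ through the natural restriction $(\bV+\bV_{\rm M})'\hookrightarrow \bV'$, and that on $\bV$ this composition reduces to the conforming one so that the identity $\mathcal{A}\,T=\mathrm{Id}$ applies. Once this is set up cleanly, the rest of the proof is the perturbation/decomposition argument sketched above.
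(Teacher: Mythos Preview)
Your proposal is correct and essentially the same as the paper's argument. Both hinge on the same explicit inverse formula
\[
\mathcal{F}_{\Psi}^{-1}(\Xi)=\Xi-(\mathcal{A}+\mathcal{B}'(\Psi)^{*})^{-1}\mathcal{B}'_{\NC}(\Psi)^{*}(\Xi),
\]
which you derive from scratch via the decomposition $\Phi=\Xi+\widetilde\Phi$ with $\widetilde\Phi\in\bV$, while the paper writes it down citing \cite[Lemma~4.3]{SCNNDS}. The only cosmetic difference is the location of the perturbation step: you perturb $(\mathcal{A}+\mathcal{B}'(\bar\Psi)^{*})^{-1}\to(\mathcal{A}+\mathcal{B}'(\Psi_u)^{*})^{-1}$ on $\bV'$ and then apply the formula directly at $\Psi_u$, whereas the paper first bounds $\|\mathcal{F}_{\bar\Psi}^{-1}\|$ from the formula and then perturbs $\mathcal{F}_{\bar\Psi}\to\mathcal{F}_{\Psi_u}$ on the full space $\bV+\bV_{\rm M}$ via the Banach-space Lemma~\ref{banach} and the mean-value estimate $\trinl\bar\Psi-\Psi_u\trinr_2\lesssim\|\bar u-u\|_{L^2(\omega)}$; both routes yield the stated bound.
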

 \begin{thm}[Reliability for the adjoint variable]\label{thm.adjoint.reliability}
Let $(\bPsi,\bTheta,\bar{\bf u})$ (resp. $(\bPsi_{\rm{M}},\bTheta_{\rm{M}},\bar{\bf u}_h)$) solve the optimality system \eqref{opt_con} (resp. \eqref{discrete.opt}). Then for sufficiently small mesh-size $h$, there exists an $h-$independent positive constant $C_{\rm AD, rel}$ such that
 	\begin{equation}\label{adjoint.reliability}
 	\tnr{\bTheta-\bTheta_{\rm{M}}}_{\rm NC}^2\le C_{\rm AD, rel}^2\bigg(\sum_{K\in\cT }\eta^2_{K,\bPsi_{\rm{M}}}+ \sum_{K\in\cT }\eta^2_{K,\bTheta_{\rm{M}}}+\sum_{E\in\cE }\eta^2_{E,\bPsi_{\rm{M}}}+\sum_{E\in\cE }\eta^2_{E,\bTheta_{\rm{M}}}+\|\bar u - \bar {u}_h \|_{L^2(\omega)}^2\bigg).
 	\end{equation}
 	\end{thm}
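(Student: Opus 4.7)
The plan is to split the error into a continuous perturbation part and a discretization part via the auxiliary adjoint $\hTheta$ from \eqref{auxiadje_1}, namely
\[
\trinl \bTheta - \bTheta_{\rm M} \trinr_{\rm NC} \le \trinl \bTheta - \hTheta \trinr_2 + \trinl \hTheta - J\bTheta_{\rm M} \trinr_2 + \trinl (J-1)\bTheta_{\rm M} \trinr_{\rm NC},
\]
and bound the three pieces separately, following the scheme already used for the state in Theorem \ref{thm.state.reliability}.

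The continuous part $\trinl \bTheta - \hTheta \trinr_2$ is controlled by subtracting \eqref{auxiadje_1} from \eqref{adj_eq}. After rewriting, the difference $\bTheta - \hTheta$ solves the linearized adjoint problem (with state $\bPsi$) with right-hand side $(\bPsi - \bPsi_{\rm M}, \cdot) + 2 B_{\rm NC}(\bPsi_{\rm M} - \bPsi, \cdot, \hTheta)$. Applying the (adjoint form of the) inf-sup condition \eqref{inf_suP_adjoint}, Lemma \ref{Bnc bound}.(b), and the a priori bound \eqref{apriori.discreteadj1} on $\trinl \hTheta \trinr_2$, I obtain $\trinl \bTheta - \hTheta \trinr_2 \lesssim \trinl \bPsi - \bPsi_{\rm M} \trinr_{\rm NC}$, which is then controlled by the state estimators plus $\|\bar u - \bar u_h\|_{L^2(\omega)}$ via Theorem \ref{thm.state.reliability}. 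The companion-error $\trinl (J-1) \bTheta_{\rm M} \trinr_{\rm NC}$ is immediate from Lemma \ref{hctenrich}.(e) in terms of the jump estimator $\eta_{E,\bTheta_{\rm M}}$.

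The bulk of the work is the middle term $\trinl \hTheta - J\bTheta_{\rm M} \trinr_2$. The inf-sup lower bound \eqref{thetahat.infsup}, read generically, supplies $\Phi \in \bV$ with $\trinl \Phi \trinr_2 = 1$ satisfying
\[
(\beta/2)\, \trinl \hTheta - J\bTheta_{\rm M} \trinr_2 \le A(\Phi, \hTheta - J\bTheta_{\rm M}) + 2B_{\rm NC}(\bPsi_{\rm M}, \Phi, \hTheta - J\bTheta_{\rm M}).
\]
Using \eqref{auxiadje_1} to substitute out $\hTheta$ and then inserting $I_{\rm M}\Phi$ as a test function in \eqref{adj_eq1} produces a residual in the six-term form
\[
A_{\rm NC}(I_{\rm M}\Phi,(1-J)\bTheta_{\rm M}) + A_{\rm NC}(\Phi - I_{\rm M}\Phi, J\bTheta_{\rm M}) + 2B_{\rm NC}(\bPsi_{\rm M}, \Phi, (1-J)\bTheta_{\rm M}) + 2B_{\rm NC}(\bPsi_{\rm M}, \Phi - I_{\rm M}\Phi, \bTheta_{\rm M}) + (\bPsi_{\rm M}-\Psi_d,\, I_{\rm M}\Phi - \Phi),
\]
(up to regrouping). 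Each term is handled with the tools already deployed in Theorem \ref{thm.state.reliability}: Lemma \ref{Morley_Interpolation} (orthogonality and approximation of $I_{\rm M}$), Lemma \ref{hctenrich} (companion estimates producing edge jumps $\eta_{E,\bPsi_{\rm M}}$ and $\eta_{E,\bTheta_{\rm M}}$), Lemma \ref{Anc.bound}, and Lemma \ref{Bnc bound}.(a). The volume residual $\eta_{K,\rm{res},\bTheta_{\rm M}}$ pops out from the $L^2$ pairing against $I_{\rm M}\Phi - \Phi$ together with Lemma \ref{Morley_Interpolation}.(b). The characteristic $\eta_{K,\P_0,\bTheta_{\rm M}}$ estimator appears exactly from the term $B_{\rm NC}(\bPsi_{\rm M}, \Phi - I_{\rm M}\Phi, \bTheta_{\rm M})$: since $D^2\bPsi_{\rm M}$ is piecewise constant, the mean $\P_0 \bTheta_{\rm M}$ can be inserted for free (as in the argument of Lemma \ref{Bnc bound adjointH1error}), so the pairing reduces to $(D^2\bPsi_{\rm M} (1-\P_0)\bTheta_{\rm M}, \cdot)$ tested against $(\Phi - I_{\rm M}\Phi)$ whose $L^\infty$-norm is bounded by $\trinl \Phi \trinr_2$ via Lemma \ref{staest}.

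The principal obstacle is keeping the trilinear residual clean: since neither $\bPsi_{\rm M}$ nor $\bTheta_{\rm M}$ lies in $\bV$, the transfer between $B$ and $B_{\rm NC}$ must be done by inserting $J$ and $\P_0$ in precisely the right places so that only the computable estimator pieces $\eta_{E,\bPsi_{\rm M}}$, $\eta_{E,\bTheta_{\rm M}}$ and $\eta_{K,\P_0,\bTheta_{\rm M}}$ survive, rather than nonlocal averages that would make efficiency unclear. Uniform control of the coefficients along the way is supplied by Lemma \ref{automorphism} and by the $\varepsilon$-smallness \eqref{varepsilon}, which let us absorb higher-order nonlinear remainders. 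Combining all three pieces and invoking \eqref{state.reliability} yields the claim with constant $C_{\rm AD, rel}$ independent of $h$.
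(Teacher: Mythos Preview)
Your overall strategy --- the triangle-inequality split through $\hTheta$ and $J\bTheta_{\rm M}$, the perturbed inf--sup bound \eqref{thetahat.infsup}, the residual decomposition after inserting $I_{\rm M}\Phi$ into \eqref{adj_eq1}, and the control of $\trinl\bTheta-\hTheta\trinr_2$ by $\trinl\bPsi-\bPsi_{\rm M}\trinr_{\rm NC}$ --- is the same as the paper's. The gap is in how you claim the two volume estimators materialise.

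Your assertion that $\eta_{K,\mathrm{res},\bTheta_{\rm M}}$ ``pops out from the $L^2$ pairing against $I_{\rm M}\Phi-\Phi$'' is not right: that pairing alone only yields $h_K^2\trinl\bPsi_{\rm M}-\Psi_d\trinr_{L^2(K)}$, \emph{without} the bracket terms $[\bpsi_{\rm M,i},\btheta_{\rm M,j}]$ that appear in \eqref{def.est.adjoint.volume1}. Those bracket terms come from the trilinear residual $S_5=2B_{\rm NC}(\bPsi_{\rm M},(I_{\rm M}-1)\Phi,\bTheta_{\rm M})$, and you have already consumed all of $S_5$ to produce $\eta_{K,\P_0,\bTheta_{\rm M}}$. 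So your route delivers a \emph{different} estimator, not the one in the theorem. The paper's remedy is to decompose $S_5$ further by inserting $JI_{\rm M}\Phi$ (see \eqref{s5.adj}): the piece $B_{\rm NC}(\bPsi_{\rm M},(1-J)I_{\rm M}\Phi,(1-\P_0)\bTheta_{\rm M})$ gives $\eta_{K,\P_0,\bTheta_{\rm M}}$ via a Cauchy--Schwarz with $\trinl(1-J)I_{\rm M}\Phi\trinr_{\rm NC}\lesssim 1$; the remaining piece $B(J\bPsi_{\rm M},(JI_{\rm M}-1)\Phi,J\bTheta_{\rm M})$ is pushed (using the symmetry of $b(\bullet,\bullet,\bullet)$ in the second and third slots, cf.\ \eqref{scalar.b}) into the form $b_{\rm NC}(\bpsi_{\rm M,i},\btheta_{\rm M,j},(I_{\rm M}-1)\phi_k)$ and then \emph{combined} with $S_1$ to produce exactly $\eta_{K,\mathrm{res},\bTheta_{\rm M}}$, as in \eqref{s5.s1}. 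The other intermediate pieces involve $(1-J)\bPsi_{\rm M}$ or $(1-J)\bTheta_{\rm M}$ and are absorbed by the edge estimators via Lemma~\ref{hctenrich}.$(e)$.

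A smaller correction: in your description of the $\eta_{K,\P_0}$ bound, the von K\'arm\'an bracket keeps the second derivatives on the middle argument, so after inserting $\P_0\bTheta_{\rm M}$ for free (which is correct, by Lemma~\ref{Morley_Interpolation}.$(a)$) the Cauchy--Schwarz pairs $\|D^2\bpsi_{\rm M,i}(1-\P_0)\btheta_{\rm M,j}\|_{L^2(K)}$ against $\|D^2_{\rm NC}(\Phi-I_{\rm M}\Phi)\|_{L^2(K)}$, controlled by Lemma~\ref{Morley_Interpolation}.$(b)$ --- not against $\|\Phi-I_{\rm M}\Phi\|_{L^\infty}$ via Lemma~\ref{staest}.
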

 \begin{proof}
 The terms $\tnr{\widehat{\Theta}-\bTheta_\M}_{\rm NC}$ and $\tnr{\bar{\Theta}-\hTheta}_{2}$ are estimated and then a triangle inequality completes the proof. The inf-sup condition \eqref{inf_suP_adjoint} implies for any $0<\epsilon_3<{\beta}$ there exists some  $\Phi \in \bV$ with $\trinl \Phi \trinr_2 =1$ and
$$ ({\beta}-\epsilon_3)\trinl \hTheta-J \bTheta_\M\trinr_2 \le A( \hTheta-J \bTheta_\M, \Phi)+2B_\NC(\bPsi_\M,\Phi, \hTheta-J \bTheta_\M)+2B_\NC(\bPsi-\bPsi_\M,\Phi, \hTheta-J \bTheta_\M).$$
Since $\trinl \bar\Psi-\bPsi_\M\trinr_\NC\le \varepsilon \le {\beta}/(4C_b)$, Lemma \ref{Bnc bound}.$(b)$ for the last term in the right hand side of the above inequality shows
$$ ({\beta}/2-\epsilon_3)\trinl \hTheta-J \bTheta_\M\trinr_2 \le A( \hTheta-J \bTheta_\M, \Phi)+2B_\NC(\bPsi_\M,\Phi, \hTheta-J \bTheta_\M).$$
 This, \eqref{auxiadje_1}, \eqref{adj_eq1}  and simple manipulation eventually lead to
\begin{align}
& ({\beta}/2-\epsilon_3)\trinl \hTheta-J \bTheta_\M\trinr_2 \le (\bar \Psi_{\M}-\Psi_{d},\Phi)-A(J \bTheta_\M, \Phi)-2B_\NC(\bPsi_\M,\Phi, J \bTheta_\M) =(\bar \Psi_{\M}-\Psi_{d},(1-I_\M) \Phi)\nonumber\\
&\quad -A_\NC((J -1)\bTheta_\M , \Phi)+A_\NC(\bTheta_\M, (I_\M-1) \Phi ) -2B_\NC(\bPsi_\M,\Phi, J \bTheta_\M)+2B_\NC(\bPsi_\M,I_\M\Phi,\bTheta_\M) \nonumber \\
& =(\bar \Psi_{\M}-\Psi_{d},(1-I_\M) \Phi)-A_\NC((J -1)\bTheta_\M, \Phi)+A_\NC(\bTheta_\M, (I_\M -1)\Phi) +2B_\NC(\bPsi_\M,\Phi, (1-J) \bTheta_\M)\nonumber\\
&\quad+2B_\NC(\bPsi_\M, (I_\M-1)\Phi,\bTheta_\M)=:\sum_{i=1}^{5}S_i. \label{s1.5.adj}
\end{align} 
A Cauchy-Schwarz inequality shows that $S_2 \le \trinl (J-1)\bTheta_{\M}\trinr_{\NC}.$ Since the piecewise second derivatives of $\bTheta_\M$ are constants, Lemma \ref{Morley_Interpolation}.$(a)$ implies $S_3=0$. Lemma \ref{Bnc bound}.$(b)$ and \eqref{dissta} prove $S_4 \le C_bM_1 \trinl (J-1)\bTheta_{\M}\trinr_{\NC}$. 
\noindent The orthogonality property of $J$ in Lemma \ref{hctenrich}.$(c)$ proves $B_\NC(\bPsi_\M,(1-J)I_\M\Phi,\P_0\bTheta_\M)=0$. This and elementary algebra lead to
\begin{align}\label{s5.adj}
S_5/2&=B_\NC(\bPsi_\M,(1-J)I_\M\Phi,(1-\P_0) \bTheta_\M)+B_\NC(\bPsi_\M,(J I_\M-1) \Phi,(1-J) \bTheta_\M) \nonumber\\
&\qquad +B_\NC((1-J)\bPsi_\M,(J I_\M-1) \Phi,J \bTheta_\M)+B(J\bPsi_\M,(J I_\M -1)\Phi,J \bTheta_\M).
\end{align}
Triangle inequalities, Lemmas \ref{hctenrich}.$(d)$ with $v=\bTheta$, \eqref{varepsilon}, the second inequality of \eqref{inf_suP_adjoint} and Lemma \ref{ap}.$(a)$ show that 
\begin{equation}\label{companion.adjoint}
\trinl J \bTheta_\M\trinr_2+\trinl \bTheta_\M \trinr_\NC\le \trinl (J-1) \bTheta_\M\trinr_\NC+2(\trinl \bTheta-\bTheta_\M\trinr_\NC+\trinl \bTheta \trinr_2)\le (2+\Lambda_\jc)\varepsilon+2\trinl \bTheta\trinr_2:=M_2.
\end{equation}
Lemmas \ref{Morley_Interpolation}.$(b)$ and \ref{hctenrich}.$(d)$ with $v=\Phi$ verify
\begin{equation}\label{jim}
\trinl (J I_\M -1)\Phi\trinr_\NC\le \trinl (J-1) I_\M \Phi\trinr_\NC+\trinl (I_\M -1)\Phi\trinr_\NC\ \le (\Lambda_\jc +1)\trinl (I_\M -1)\Phi\trinr_\NC \le C_{I} (\Lambda_\jc +1).
\end{equation}
The first three terms in the right-hand side of \eqref{s5.adj} are estimated now. The definition of $B_\NC(\bullet,\bullet,\bullet)$, the Cauchy-Schwarz inequality, \eqref{jim} and the definition \eqref{def.est.adjoint.volume2} prove
\begin{equation}\label{s51}
B_\NC(\bPsi_\M,(1-J)I_\M\Phi,(1-\P_0) \bTheta_\M) \le C_{I} (\Lambda_\jc +1)\big(\sum_{K\in\cT }\eta_{K,\P_0,\bTheta_{\rm{M}}}^2\big)^{1/2}.
\end{equation}
Lemma \ref{Bnc bound}.$(b)$, \eqref{dissta}, \eqref{companion.adjoint}, \eqref{jim},  Lemma \ref{hctenrich}.$(e)$ and the definition \eqref{def.est.state.edge}-\eqref{def.est.adjoint.edge} show
\begin{align}
B_\NC(\bPsi_\M,(J I_\M-1) \Phi,(1-J) \bTheta_\M)&\le C_bC_{I}C_J (\Lambda_\jc +1)M_1\big(\sum_{E\in\cE }\eta^2_{E,\bTheta_{\rm{M}}}\big)^{1/2},\label{s52}\\
 B_\NC((1-J)\bPsi_\M,(J I_\M-1) \Phi,J \bTheta_\M)&\le C_bC_{I}C_J (\Lambda_\jc +1)M_2\big(\sum_{E\in\cE }\eta^2_{E,\bPsi_{\rm{M}}}\big)^{1/2}.\label{s53}
\end{align}
The last term in the right hand side of \eqref{s5.adj} is estimated in its scalar version and details are provided for better clarity. The symmetry of $b(\bullet,\bullet,\bullet)$ with respect to the second and third variables, and an introduction of $\bpsi_{\M,1}$ and $\btheta_{\M,1}$ imply that the first term in the expansion can be rewritten as
\begin{align}
b(J\bpsi_{\M,1},(J I_\M-1) \phi_2,J \btheta_{\M,1})
&= b_\NC((J-1)\bpsi_{\M,1},J \btheta_{\M,1},(J I_\M-1) \phi_2)+b_\NC(\bpsi_{\M,1},(J-1) \btheta_{\M,1},(J I_\M -1)\phi_2)\nonumber\\
& \quad +b_\NC(\bpsi_{\M,1},\btheta_{\M,1},(I_\M-1) \phi_2)\label{scalar.b}
\end{align}
with $b_\NC(\bpsi_{\M,1},\btheta_{\M,1},(J-1)I_\M \phi_2)=0$ from Lemma \ref{hctenrich}.$(b)$ in the last step. Lemma \ref{Bnc bound}.$(b)$ (in its scalar version), \eqref{dissta}, \eqref{companion.adjoint}-\eqref{jim},  Lemma \ref{hctenrich}.$(e)$ and \eqref{def.est.state.edge}-\eqref{def.est.adjoint.edge} leads to bounds of the first and second terms in the right hand side of \eqref{scalar.b}. 
The third term in the right-hand side of \eqref{scalar.b} is combined with the scalar form of $S_1$ as
\begin{equation}\label{s5.s1}
2b_\NC(\bpsi_{\M,1},\btheta_{\M,1},(I_\M-1) \phi_2)+(\bpsi_{\M,2}-\psi_{d,2},(I_\M-1) \phi_2) \le C_I h^2\big(\sum_{K \in \cT}\|\bar \psi_{\M,2}-\psi_{d,2}+[\bar \psi_{\M,1}, \bar \theta_{\M,1}]\|_{L^2(K)}^2\big)^{1/2}
\end{equation}
 with the Cauchy-Schwarz inequality and Lemma \ref{Morley_Interpolation}.$(b)$. The remaining two terms in the expansion of $B_\NC(\bullet,\bullet,\bullet)$ are dealt with in an analogous way. 
A substitution of \eqref{s51}-\eqref{s5.s1} in $S_1+S_5$ and then the resulting estimates with $S_2$-$S_4$ in \eqref{s1.5.adj}, triangle inequality with $J \bTheta_\M$, Lemma \ref{hctenrich}.$(e)$ and the definitions \eqref{def.est.adjoint.volume}-\eqref{def.est.adjoint.edge} show
\begin{align}
\trinl \hTheta-\bTheta_\M\trinr_2^2 &\le \tilde{C}_{\rm AD, rel}^2\big(\sum_{K\in\cT }\eta^2_{K,\bTheta_{\rm{M}}}+\sum_{E\in\cE }\eta^2_{E,\bPsi_{\rm{M}}}+\sum_{E\in\cE }\eta^2_{E,\bTheta_{\rm{M}}}\big)\label{adj.t1.final}
\end{align}
\noindent with $\epsilon_3 \searrow 0$, and $\tilde{C}_{\rm AD, rel}^2:=4\beta^{-2}\big(C_J^2((\beta/2+1+C_bM_1)+8C_IC_bM_1(\Lambda_\jc+1))^2+C_J^2(8C_IC_bM_2(\Lambda_\jc+1))^2+C_I^2(1+4(\Lambda_\jc+1)^2)\big).$ 
%
%
%
The uniform boundedness property of $\mathcal{F}^{-1}_{\bPsi_{}}$ in Lemma \ref{automorphism} implies $\tnr{\hTheta-\bar{\Theta}}_{2}=\tnr{\mathcal{F}_{\bPsi_{}}^{-1}\mathcal{F}_{\bPsi_{}}(\hTheta-\bar{\Theta})}_{2}\le \|\mathcal{F}_{\bar\Psi}^{-1}\|_{\mathcal{L}(\bV+\bV_{\rm M})} \tnr{\mathcal{F}_{\bPsi_{}}(\hTheta-\bar{\Theta})}_{\NC}.$
The definition of $\mathcal{F}_{\bPsi_{}}$ given by \eqref{adjlinop}, \eqref{adj_eq} and \eqref{auxiadje_1} show that
\begin{align}
&\mathcal{F}_{\bPsi_{}}(\bar{\Theta}-\hTheta)=T(\bPsi-\Psi_d)-\mathcal{F}_{\bPsi_{}}({\hTheta})=T(\bPsi-\Psi_d)- \hTheta- T[{\mathcal B}_{\NC}'(\bPsi)^* (\hTheta)]\nonumber\\
&=T(\bPsi-\bPsi_\M)+ T[{\mathcal B}_{\NC}'(\bPsi_\M-\bPsi)^* (\hTheta)].\nonumber
\end{align}
Hence, Lemmas \ref{staest}.$(a)$ with absorbed constant in $`\lesssim'$ denoted as $\tilde{C}_b$, \ref{Bnc bound}.$(b)$, \eqref{apriori.discreteadj1} and Theorem \ref{thm.state.reliability} prove
\begin{align}\label{adj.t2}
\tnr{\hTheta-\bar{\Theta}}_{2} 
& \le  C_{\rm ST,rel}\|\mathcal{F}_{\bar\Psi}^{-1}\|_{\mathcal{L}(\bV+\bV_{\rm M})}\|T\|(\tilde{C}_b+2C_b{\beta}^{-1})\big(\sum_{K\in\cT }\eta^2_{K,\bPsi_{\rm{M}}}+\sum_{E\in\cE }\eta^2_{E,\bPsi_{\rm{M}}}+\|\bar u - \bar {u}_h \|_{L^2(\omega)}^2\big)^{1/2}.
\end{align}
The combination of \eqref{adj.t1.final} and  \eqref{adj.t2} concludes the proof.
\end{proof}
\begin{remark}
	\begin{itemize}
		\item[$(a)$] Note that the terms involoving $\P_0$ in the reliability estimator of adjoint equations $ \eta^2_{K,\bTheta_{\rm{M}}}$ of \eqref{def.est.adjoint.volume} are due to the combined effect of non-conformity of the method plus linear lower-order terms.

\item[$(b)$] It is possible to avoid the terms involving $\P_0$ in the reliability estimator $ \eta^2_{K,\bTheta_{\rm{M}}}$ of \eqref{def.est.adjoint.volume} which comes from $S_5=B_{\rm NC}(\bPsi_{\rm M},(I_{\rm M}-1)\Phi,\bTheta_{\rm M})$ in \eqref{s1.5.adj} with piece-wise integration by parts. However, this leads to several average terms in the edge estimators that are not residuals (in addition to the volume terms). The efficiency analysis for this is still open. A similar observation for the Navier-Stokes equation in the stream-vorticity formulation can be found in \cite[Remark 4.12]{carstensen2017nonconforming}.
	\end{itemize}
\end{remark} 

\subsection{A posteriori error analysis for the control variable}

Recall the auxiliary variable $\widetilde{u}_h$ given in \eqref{defn_post_proc}. This computable variable helps to derive the reliability estimate for the control variable. 
A key property in favor of the definition of $\widetilde{u}_h \in U_{\rm{ad}}$ is that $\widetilde{u}_h$ satisfies the optimality condition
	\begin{equation}\label{opt_con_ppt}
	\left({\mathbf C}^* {\bar \Theta_{\M}} + \alpha {\bf \widetilde{u}}_h ,  {\bf u} - {\bf \widetilde{u}}_h \right)_{{\boldsymbol{L}}^2(\omega)} \ge 0  \fl  {\bf u} =(u,0)^T, \; {u \in U_{ad}}.
	\end{equation}
 \noindent Define for $u,v \in U_{ad}$,\,$j'(u)v:=( {\mathcal C}^*{\theta_{{u,1}}} + \alpha { u},v)_{L^2(\omega)},$ where $j:  U_{ad} \cap {\mathcal O}(\bar u) \rightarrow {\mathbb R}$ is the reduced cost functional defined by $j(u):= \cJ(G(u), u)$ and $G(u) = \Psi_{u} =({\psi_{u,1}, \psi_{u,2}})\in \bV$ is the unique solution to \eqref{of} corresponding to $u$.
\begin{lem}[an auxiliary control estimate]\cite[Lemma 4.16]{cmj}\label{lem.auxiliary.control}
	Let $(\bar \Psi, \bar u)$ be a regular solution to \eqref{wform} and $(\bar \Psi, \bar \Theta, \bar u)$ solves \eqref{opt_con} that satisfies the sufficient second order optimality condition. Let $\widetilde{u}_h$ be defined as in \eqref{defn_post_proc}. Then for sufficiently small mesh-size $h$, there exists a $\delta_1>0$ such that 
	$	\| \bar u-\widetilde{u}_h\|_{L^2(\omega)}^2 \le2\delta_1^{-1} (j'(\widetilde{u}_h)-j'(\bar{u}))(\widetilde{u}_h-\bar{u}).$
\end{lem}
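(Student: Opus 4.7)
The plan is to derive the quadratic growth bound from the second-order sufficient optimality condition (SSOC) satisfied by $\bar u$ at the reference solution. First I would record that the reduced cost functional $j$ is twice continuously Fréchet differentiable on a neighborhood ${\mathcal O}(\bar u)$ of $\bar u$ in $L^2(\omega)$: Theorem \ref{th2.5} asserts that the control-to-state map $G$ is of class $C^\infty$ on a ball around $\bar u$ with uniformly bounded first and second derivatives, so composition with the quadratic tracking functional $\cJ$ yields $j\in C^2({\mathcal O}(\bar u))$, and in particular $j'$ is Lipschitz and $j''$ depends continuously on the base point $u$ as an element of the space of bounded symmetric bilinear forms on $L^2(\omega)$.

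Next, applying the fundamental theorem of calculus to $j'$ along the segment $u_t:=\bar u+t(\widetilde u_h-\bar u)$, $t\in[0,1]$, I would write
$$(j'(\widetilde u_h)-j'(\bar u))(\widetilde u_h-\bar u)=\int_0^1 j''(u_t)(\widetilde u_h-\bar u,\widetilde u_h-\bar u)\,dt.$$
By hypothesis there exists a constant (which I call $2\delta_1$) and a cone of critical directions at $\bar u$ on which $j''(\bar u)(v,v)\ge 2\delta_1\|v\|_{L^2(\omega)}^2$; see the SSOC statement in \cite[Section 3.2]{cmj} and \cite[Section 2.3]{ngr}. Continuity of the bilinear form $j''$ with respect to $u$ then lets me propagate the coercivity, with possibly halved constant, from $\bar u$ to every $u_t$ lying in a sufficiently small $L^2$-ball around $\bar u$, giving $j''(u_t)(v,v)\ge \delta_1\|v\|_{L^2(\omega)}^2$ for $v=\widetilde u_h-\bar u$.

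The last ingredient is the proximity $\widetilde u_h\to \bar u$ needed to keep the whole segment $\{u_t\}$ inside that ball. From Theorem \ref{conv.apriori}.$(b)$ the discrete adjoint converges in the energy norm, so the component $\bar\theta_{\M,1}$ converges to $\bar\theta_{1}$ in $L^2(\omega)$; the pointwise Lipschitz continuity of $\Pi_{[u_a,u_b]}$ then transfers this convergence from \eqref{defn_post_proc} to $\widetilde u_h\to \bar u$ in $L^2(\omega)$, and for $h$ small enough the segment $\{u_t\}_{t\in[0,1]}$ sits in the coercivity ball. Inserting the lower bound into the integral identity and dividing by $\delta_1$ yields the claim with constant $2\delta_1^{-1}$.

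The main obstacle is making rigorous the coercivity step, because the direction $\widetilde u_h-\bar u$ need not lie in the critical cone for which the SSOC a priori provides coercivity. This is the technical heart of \cite[Lemma 4.16]{cmj}: one decomposes $\widetilde u_h-\bar u$ into a tangential component supported where the optimal control is strictly inactive and a component supported on strongly active constraints; the first is controlled by the SSOC, while the second contributes non-negatively because the strict-complementarity information encoded in the projection formulas \eqref{rep} and \eqref{defn_post_proc} forces the bilinear and linear terms associated with it to have the correct sign. The SSOC then closes on the whole difference $\widetilde u_h-\bar u$, and the argument above applies verbatim.
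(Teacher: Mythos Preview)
The paper does not give its own proof of this lemma; it simply cites \cite[Lemma 4.16]{cmj} and uses the result as a black box in the proof of Theorem~\ref{thm.control.reliability}. Your reconstruction is the standard argument behind that citation: the integral-form Taylor expansion of $j'$ along the segment joining $\bar u$ and $\widetilde u_h$, propagation of the coercivity of $j''(\bar u)$ to nearby base points via the $C^2$-regularity of $j$ furnished by Theorem~\ref{th2.5}, and the closeness $\widetilde u_h\to\bar u$ inherited from Theorem~\ref{conv.apriori}.$(b)$ through the Lipschitz projection. You also correctly isolate the one nontrivial step --- that the SSOC is only postulated on the critical cone while $\widetilde u_h-\bar u$ need not lie there --- and your sketch (splitting into the inactive part, handled by the SSOC on an extended critical cone, and the strongly active part, where the projection formulas force $\widetilde u_h-\bar u$ to vanish for small $h$) is exactly the mechanism used in \cite{cmj}. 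So your proposal matches the argument the paper defers to.
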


\noindent
Let $\widetilde{\Psi}$ and $\widetilde{\Theta}$ be the auxiliary continuous state and adjoint variables associated with the control $\widetilde{u}_h$.  That is, $\fl \Phi \in \bV$,  seek $(\widetilde{\Psi}, \widetilde{\Theta}) \in \bV \times \bV$  such that
\begin{align*}
&   {A}({\widetilde \Psi},\Phi)+{B}({\widetilde \Psi},{\widetilde\Psi},\Phi)=(\mbox{\bf F} + {\bf C}{\widetilde{\bf u}_h}, \Phi) \; \text{ and }\; 
 {A}(\Phi, {\widetilde\Theta})+2{B}({\widetilde \Psi},\Phi,{\widetilde \Theta})=(\widetilde \Psi- \Psi_d, \Phi). \label{aux_state_eq_tilde}
\end{align*}

\begin{thm}[Reliability for the control variable]\label{thm.control.reliability}
Let $(\bPsi,\bTheta,\bar{\bf u})$ (resp. $(\bPsi_{\rm{M}},\bTheta_{\rm{M}},\bar{\bf u}_h)$) solve the optimality system \eqref{opt_con} (resp.  \eqref{discrete.opt}). Then for a sufficiently small choice of the mesh size $h$,  there exists an $h-$independent positive constant $C_{\rm CON, rel}$ such that  
	\begin{equation}\label{control.reliability}
	\|\bar{u} - {\bar{u}}_h\|_{L^2(\omega)}^2 \le C_{\rm CON, rel}^2\bigg( \sum_{K\in\cT }\eta^2_{K,\bPsi_{\rm{M}}}+\sum_{K\in\cT }\eta^2_{K,\bTheta_{\rm{M}}}+ \sum_{K\in\cT }\eta^2_{K,\bar u_h}+\sum_{E\in\cE }\eta^2_{E,\bPsi_{\rm{M}}}+\sum_{E\in\cE }\eta^2_{E,\bTheta_{\rm{M}}}\bigg).
	\end{equation}
	\end{thm}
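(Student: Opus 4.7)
The plan is to introduce the postprocessed control $\widetilde u_h$ from \eqref{defn_post_proc} as a bridge between the continuous optimum $\bar u$ and the discrete optimum $\bar u_h$, and then apply a triangle inequality
\[
\|\bar u - \bar u_h\|_{L^2(\omega)}^2 \le 2\|\bar u - \widetilde u_h\|_{L^2(\omega)}^2 + 2\|\widetilde u_h - \bar u_h\|_{L^2(\omega)}^2.
\]
The second term is, by definition, precisely $2\sum_{K\in\cT}\eta^2_{K,\bar u_h} = 2\eta^2_{\rm CON}$, which already appears on the right-hand side of \eqref{control.reliability}. All the work lies in bounding $\|\bar u - \widetilde u_h\|_{L^2(\omega)}$.

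For this, I would apply the second-order sufficient condition in Lemma \ref{lem.auxiliary.control} to get
$\|\bar u - \widetilde u_h\|^2 \le 2\delta_1^{-1}(j'(\widetilde u_h) - j'(\bar u))(\widetilde u_h - \bar u)$. The continuous first order optimality condition \eqref{opt3} tested with $u = \widetilde u_h$ gives $j'(\bar u)(\widetilde u_h - \bar u) \ge 0$, so the bound reduces to $2\delta_1^{-1}j'(\widetilde u_h)(\widetilde u_h - \bar u)$, where $j'(\widetilde u_h)(v) = (\mathcal{C}^*\widetilde\theta_1 + \alpha\widetilde u_h, v)_{L^2(\omega)}$ and $\widetilde \Theta = (\widetilde\theta_1,\widetilde\theta_2)$ is the continuous adjoint associated with $\widetilde u_h$. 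Subtracting the nonnegative quantity $(\mathcal{C}^*\bar\theta_{\M,1} + \alpha\widetilde u_h, \bar u - \widetilde u_h)_{L^2(\omega)}$ from \eqref{opt_con_ppt} and invoking Cauchy--Schwarz then yields
\[
\|\bar u - \widetilde u_h\|_{L^2(\omega)} \le 2\delta_1^{-1}\|\widetilde\theta_1 - \bar\theta_{\M,1}\|_{L^2(\Omega)}.
\]

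To estimate $\|\widetilde \theta_1 - \bar\theta_{\M,1}\|_{L^2}$, I would split through the continuous optimal adjoint $\bar\theta_1$:
\[
\|\widetilde \theta_1 - \bar\theta_{\M,1}\|_{L^2} \le \|\widetilde\theta_1 - \bar\theta_1\|_{L^2} + \|\bar\theta_1 - \bar\theta_{\M,1}\|_{L^2}.
\]
For the second piece, a Poincar\'e--Friedrichs inequality gives $\|\bar\theta_1 - \bar\theta_{\M,1}\|_{L^2} \lesssim \tnr{\bar\Theta - \bar\Theta_{\rm M}}_{\rm NC}$, and the already-established Theorem \ref{thm.adjoint.reliability} then bounds this by $(\eta^2_{\rm ST} + \eta^2_{\rm AD} + \|\bar u - \bar u_h\|^2_{L^2(\omega)})^{1/2}$. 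For the first piece, the $C^\infty$ control-to-state map of Theorem \ref{th2.5} combined with the linear adjoint equation \eqref{auxiadje_12} shows that the continuous control-to-adjoint map is locally Lipschitz, so $\|\widetilde\theta_1 - \bar\theta_1\|_{L^2} \lesssim \|\widetilde u_h - \bar u\|_{L^2(\omega)}$; the triangle inequality $\|\widetilde u_h - \bar u\| \le \|\widetilde u_h - \bar u_h\| + \|\bar u_h - \bar u\|$ lets me split this further into $\eta_{\rm CON}$ plus $\|\bar u - \bar u_h\|$.

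Substituting these estimates into the triangle inequality of the first paragraph produces an inequality of the form
\[
\|\bar u - \bar u_h\|_{L^2(\omega)}^2 \le \tilde C^2\bigl(\eta^2_{\rm ST} + \eta^2_{\rm AD} + \eta^2_{\rm CON}\bigr) + \tilde C^2\|\bar u - \bar u_h\|_{L^2(\omega)}^2,
\]
and the proof closes after absorbing the last term into the left-hand side. The hardest part is precisely this absorption: it requires that the constant arising from the product of $2\delta_1^{-1}$, the Lipschitz constant of the control-to-adjoint map, and $C_{\rm AD,rel}$ be strictly less than one. This is where the ``sufficiently small mesh-size $h$'' hypothesis enters, through Theorem \ref{conv.apriori}, which ensures $\|\bar u - \bar u_h\|_{L^2(\omega)}$ is small enough for the absorption to succeed; the resulting constant $C_{\rm CON,rel}$ depends on $\delta_1$, $\alpha$, the Friedrichs constant, $C_{\rm AD,rel}$, and the mesh-independent Lipschitz constant, but not on $h$.
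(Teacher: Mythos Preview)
Your argument has a genuine gap at the absorption step. You arrive at an inequality of the form
\[
\|\bar u - \bar u_h\|_{L^2(\omega)}^2 \le \tilde C^2\bigl(\eta^2_{\rm ST} + \eta^2_{\rm AD} + \eta^2_{\rm CON}\bigr) + \tilde C^2\|\bar u - \bar u_h\|_{L^2(\omega)}^2,
\]
where $\tilde C$ is built from $\delta_1^{-1}$, the Friedrichs constant, $C_{\rm AD,rel}$, and the Lipschitz constant of the control-to-adjoint map. None of these quantities depend on $h$, so $\tilde C$ is a \emph{fixed} constant with no reason to be below $1$. Invoking Theorem~\ref{conv.apriori} to say that $\|\bar u - \bar u_h\|$ is small does not help: from $x^2 \le A + Cx^2$ with $C\ge 1$ one cannot conclude $x^2\lesssim A$, no matter how small $x$ is known to be. The absorption simply does not close.

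The paper avoids this trap by splitting through the auxiliary adjoint $\hTheta$ of \eqref{auxiadje_1} rather than through $\bar\Theta$. The point is that the adjoint reliability proof (Theorem~\ref{thm.adjoint.reliability}) was carried out in two separate pieces: the estimate \eqref{adj.t1.final} for $\trinl\hTheta-\bar\Theta_{\rm M}\trinr_{\rm NC}$ involves \emph{only estimators} and no control error, while the estimate \eqref{adj.t2} for $\trinl\bar\Theta-\hTheta\trinr_2$ is the part that carries $\|\bar u-\bar u_h\|$. By writing $\trinl\widetilde\Theta-\bar\Theta_{\rm M}\trinr \le \trinl\widetilde\Theta-\hTheta\trinr + \trinl\hTheta-\bar\Theta_{\rm M}\trinr$, the second term is bounded directly by \eqref{adj.t1.final}, and for the first term one reruns the argument behind \eqref{adj.t2} with $(\bar\Psi,\bar\Theta,\bar u)$ replaced by $(\widetilde\Psi,\widetilde\Theta,\widetilde u_h)$. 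The resulting bound then contains $\|\widetilde u_h-\bar u_h\|_{L^2(\omega)}$, which \emph{is} $\eta_{\rm CON}$, rather than $\|\bar u-\bar u_h\|_{L^2(\omega)}$. Thus $\|\bar u-\bar u_h\|$ never appears on the right-hand side and no absorption is needed. Your route through $\bar\Theta$ and the full Theorem~\ref{thm.adjoint.reliability} reintroduces the very quantity you are trying to bound; rerouting through $\hTheta$ and the intermediate estimate \eqref{adj.t1.final} is what makes the argument close.
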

\begin{proof}
	  The triangle inequality with $\widetilde{u}_h$ and the definition \eqref{def.est.statecontrol.volume} lead to $ \| \bar u-\bar u_h \|_{L^2(\omega)}^2 \le \| \bar u-\widetilde{u}_h \|_{L^2(\omega)}^2 +\sum_{K \in \T}\eta^2_{K,\bar u_h}$. The continuous optimality condition \eqref{opt3} with $u=\widetilde{u}_h$ and \eqref{opt_con_ppt} with $u=\bar u$ show that
	$$j'(\bar u)(\widetilde{u}_h-\bar u) \ge 0, \quad 	-\left({\mathcal C}^* {\bar \theta_{\M,1}} + \alpha { \widetilde{u}_h} ,   { \widetilde{u}_h} -{\bar u} \right) \ge 0.$$
	These bounds, Lemma \ref{lem.auxiliary.control} and the definition of $j'(\bullet)$ lead to
	\begin{align*}
	\frac{\delta_1}{2}\| \bar u-\widetilde{u}_h\|_{L^2(\omega)}^2& \le  (j'(\widetilde{u}_h)-j'(\bar{u}))(\widetilde{u}_h-\bar{u})\le j'(\widetilde{u}_h)(\widetilde{u}_h-\bar{u}) \\
		&\le j'(\widetilde{u}_h)(\widetilde{u}_h-\bar{u})-\left({\mathcal C}^* {\bar \theta_{\M,1}} + \alpha { \widetilde{u}_h} ,   { \widetilde{u}_h} -{\bar u} \right)   =({\mathcal C}^*(\widetilde{\theta}-\bar \theta_{\M,1}), \widetilde{u}_h -{\bar u} ).
	\end{align*}
	Therefore, a Cauchy-Schwarz inequality results in
	$\| \bar u-\widetilde{u}_h\|_{L^2(\omega)}\le 2\delta_1^{-1} \trinl \widetilde{\Theta}-\bTheta_\M \trinr.$ 
	A triangle inequality that introduces $\hTheta$, Poincar\'e inequality with constant $C_{p}$, Lemma \ref{staest}.$(a)$ and \eqref{adj.t1.final} yield
	\begin{equation}\label{control.t1}
		\frac{\delta_1^2}{4}	\| \bar u-\widetilde{u}_h\|_{L^2(\omega)}^2 \le C_{p}^2  \trinl \widetilde{\Theta}-\hTheta\trinr_\NC^2 + \tilde{C}_b^2\tilde{C}_{\rm AD,rel}^2\bigg(\sum_{K\in\cT }\eta^2_{K,\bTheta_{\rm{M}}}+\sum_{E\in\cE }\eta^2_{E,\bTheta_{\rm{M}}}+\sum_{E\in\cE }\eta^2_{E,\bPsi_{\rm{M}}}\bigg).
	\end{equation}
The definitions \eqref{rep}, \eqref{defn_post_proc}, the Lipschitz property of operator $\Pi_{[u_{a},u_{b}]}$ and Lemma \ref{staest}.$(a)$ show
$\| \bar u-\widetilde{u}_h\|_{L^2(\omega)}  \leq \alpha^{-1} \tnr{\bTheta-\bar{\Theta}_{\rm{M}}}_{\bL^2(\omega)}\le \alpha^{-1}\tilde{C}_b \tnr{\bTheta-\bar{\Theta}_{\rm{M}}}_{\NC}.$ Hence, \eqref{varepsilon} implies $\| \bar u-\widetilde{u}_h\|_{L^2(\omega)} \le \alpha^{-1}\tilde{C}_b\varepsilon$. This, the estimate in \eqref{adj.t2} with $(\bTheta, \bPsi,\bar u)$ replaced by $(\widetilde{\Theta},\widetilde{\Psi},\widetilde{u}_h)$ and the definition \eqref{def.est.statecontrol.volume} show that
\begin{align*}
\trinl \widetilde{\Theta}-\hTheta\trinr_\NC \le C_{\rm ST,rel}\|\mathcal{F}_{\widetilde \Psi}^{-1}\|_{\mathcal{L}(\bV+\bV_{\rm M})}\|T\|(\tilde{C}_b+2C_b{\beta}^{-1})\big(\sum_{K\in\cT }\eta^2_{K,\bPsi_{\rm{M}}}+\sum_{E\in\cE }\eta^2_{E,\bPsi_{\rm{M}}}+\sum_{K \in \T}\eta^2_{K,\bar u_h}\big)^{1/2}.
	\end{align*}
A  substitution of this in \eqref{control.t1} concludes the proof.
\end{proof}

	\begin{proof}[Proof of Theorem \ref{thm.global.reliability}]
		The proofs follows from a combination of Theorems \ref{thm.state.reliability}, \ref{thm.adjoint.reliability} and \ref{thm.control.reliability} for any $\cT \in \bT(\delta)$ which satisfies \eqref{varepsilon}.
	\end{proof}
\section{Efficiency}\label{sec.efficiency}
This section deals with the {\it a posteriori} efficient error estimates for the control problem. The local efficiency proofs are based on the standard bubble function techniques \cite{verfurth}, \cite[Lemma 5.3]{CCGMNN18}. The combined result is stated first.
\begin{thm}{\bf(Efficiency)}\label{thm.global.efficiency}
	Let $(\bPsi,\bTheta,\bar{\bf u})$ (resp. $(\bPsi_{\rm{M}},\bTheta_{\rm{M}},\bar{\bf u}_h)$) solve the optimality system \eqref{opt_con} (resp. \eqref{discrete.opt}). Then, there exists a positive constant $C_{\rm eff}$ independent of $h$ such that
	\begin{align}
	\eta^2 &\le C_{\rm eff}^2\bigg( \tnr{\bPsi-\bPsi_{\rm{M}}}_{\rm NC}^2+	\tnr{\bTheta-\bTheta_{\rm{M}}}_{\rm NC}^2+\|\bar{u} - {\bar{u}}_h\|_{L^2(\omega)}^2 +\sum_{K \in \cT}h_K^4\left( \norm{f-f_h}_{L^2(K)}^2+\trinl\Psi_{d}-\Psi_{d,h}\trinr_{\bL^2(K)}^2\right)\nonumber\\
	&\qquad + \trinl (1-I_{\rm M}) \bTheta \trinr_{1,2,h}^2 +\trinl(1-I_{\rm M}) \bTheta\trinr_{0,\infty}^2+\trinl(1-\P_0)\bTheta\trinr_{0,\infty}^2\bigg),\nonumber
	\end{align}
	{{where $f_h$ (resp. $\Psi_{d,h}$) denotes the piece-wise average of $f$ (resp. $\Psi_{d}$) and $\eta^2:=\eta^2_{\rm ST}+\eta^2_{\rm AD}+\eta^2_{\rm CON}$ is the estimator defined in \eqref{global.reliability}.}}
\end{thm}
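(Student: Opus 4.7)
The plan is to bound each of the three groups $\eta_{\rm ST}^{2}$, $\eta_{\rm AD}^{2}$ and $\eta_{\rm CON}^{2}$ separately, term by term, via the standard bubble--function machinery of Verf\"urth, combined with the regularity and smallness assumptions already established in \Cref{sec.apriori,sec.reliability}. Throughout the proof, $b_{K}\in H^{2}_{0}(K)$ and $b_{E}\in H^{2}_{0}(\omega_{E})$ denote the usual interior and edge bubble functions with the familiar inverse and scaling properties $\|v\|_{L^{2}(K)}\lesssim\|b_{K}^{1/2}v\|_{L^{2}(K)}$ and $\|v\|_{L^{2}(E)}\lesssim\|b_{E}^{1/2}v\|_{L^{2}(E)}$ for $v\in\mathbb{P}_{k}(K)$ or $\mathbb{P}_{k}(E)$.

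\textbf{Step 1: volume and edge estimators for the state.}
For the volume residual $\eta_{K,\bar\Psi_{\rm M}}$, define on each $K\in\cT$ the localised function $v_{K}:=(f+\mathcal{C}\bar u_{h}+[\bar\psi_{\M,1},\bar\psi_{\M,2}])\,b_{K}^{2}$ extended by zero. Testing the state residual $({\bf F}+{\bf C}\bar{\bf u}-\mathcal{A}\bPsi-\mathcal{B}(\bPsi),v_{K})=0$ from \eqref{state_eq} against $v_{K}$ and subtracting the discrete equation \eqref{state_eq1} produce, after integration by parts on $K$, a volume residual that is absorbed on the left using the inverse estimate, modulo the data oscillation $\|h_{K}^{2}(f-f_{h})\|_{L^{2}(K)}$ and a perturbation controlled by $\|\bPsi-\bPsi_{\M}\|_{\rm NC}$ via \Cref{Bnc bound}(b) and the a~priori bound of \Cref{ap}(a). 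The analogous argument with test function $v_{K}:=[\bar\psi_{\M,1},\bar\psi_{\M,1}]b_{K}^{2}$ handles the second scalar of the state residual. For $\eta_{E,\bar\Psi_{\rm M}}$, one extends $[D^{2}\bar\psi_{\M,i}\tau_{E}]_{E}$ polynomially onto $\omega_{E}$, multiplies by $b_{E}^{2}$, and uses the previous volume estimate on the two neighbouring triangles; this follows \cite[Lemma~5.3]{CCGMNN18} and \cite{carstensen2017nonconforming} verbatim.

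\textbf{Step 2: volume and edge estimators for the adjoint.}
For the residual part $\eta_{K,\rm res,\bTheta_{\rm M}}$, the bubble function technique is applied to the two components of the adjoint residual associated with \eqref{adj_eq}--\eqref{adj_eq1}: the test $v_{K}:=(\bar\psi_{\M,1}-\psi_{d,1}-[\bar\psi_{\M,1},\bar\theta_{\M,2}]+[\bar\psi_{\M,2},\bar\theta_{\M,1}])b_{K}^{2}$ is inserted into the adjoint identity, leading to a right-hand side bounded by $\trinl\bTheta-\bTheta_{\M}\trinr_{\rm NC}$, $\trinl\bPsi-\bPsi_{\M}\trinr_{\rm NC}$ and the oscillations of $\Psi_{d}$ and $f$; the second scalar is dealt with analogously. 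The edge estimator $\eta_{E,\bTheta_{\rm M}}$ is handled exactly as in Step~1, using $b_{E}$ on the jumps $[D^{2}\bar\theta_{\M,i}\tau_{E}]_{E}$.

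\textbf{Step 3: the nonstandard projection estimator $\eta_{K,\P_{0},\bTheta_{\rm M}}$.}
This is the main obstacle, since these terms are not residuals but arise from the lower-order terms in the adjoint equation interacting with nonconformity. Because $D^{2}\bar\psi_{\M,i}$ is piece-wise constant on each $K$, I would bound
\[
\|D^{2}\bar\psi_{\M,i}(1-\P_{0})\bar\theta_{\M,j}\|_{L^{2}(K)}
\le \|D^{2}\bar\psi_{\M,i}\|_{L^{\infty}(K)}\,\|(1-\P_{0})\bar\theta_{\M,j}\|_{L^{2}(K)},
\]
use boundedness of $\|D^{2}\bar\psi_{\M,i}\|_{L^{\infty}(K)}$ (uniform in $h$ thanks to \Cref{ap}(a) and \Cref{conv.apriori}(a)), and then decompose
\[
(1-\P_{0})\bar\theta_{\M,j}=(1-\P_{0})(\bar\theta_{\M,j}-I_{\rm M}\bar\theta_{j})+(1-\P_{0})I_{\rm M}\bar\theta_{j},
\]
bounding the first piece by $\trinl\bTheta-\bTheta_{\M}\trinr_{\rm NC}$ (after a Poincar\'e--type estimate on $K$), and the second by $|K|^{1/2}\trinl(1-I_{\rm M})\bTheta\trinr_{0,\infty}+|K|^{1/2}\trinl(1-\P_{0})\bTheta\trinr_{0,\infty}$, which are precisely the additional higher-order terms that appear on the right-hand side of the theorem. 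Summation over $K\in\cT$ then delivers the required bound.

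\textbf{Step 4: control estimator and conclusion.}
For $\eta^{2}_{K,\bar u_{h}}=\|\widetilde u_{h}-\bar u_{h}\|^{2}_{L^{2}(K)}$, I would use the representations \eqref{rep} and \eqref{defn_post_proc} together with the nonexpansiveness of $\Pi_{[u_{a},u_{b}]}$ to obtain
\[
\|\widetilde u_{h}-\bar u_{h}\|_{L^{2}(\omega)}\le \alpha^{-1}\|\mathcal{C}^{*}(\bar\theta_{\M,1}-\bar\theta_{1})\|_{L^{2}(\omega)}+\|\bar u-\bar u_{h}\|_{L^{2}(\omega)},
\]
the first term being dominated by $\trinl\bTheta-\bTheta_{\M}\trinr_{\rm NC}$ via \Cref{staest}(a). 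Summing the bounds from Steps~1--4 gives the claim; the main obstacle, as indicated, is Step~3, while Steps~1, 2 and 4 are essentially the standard efficiency arguments adapted componentwise to the coupled system.
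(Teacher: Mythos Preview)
Your Steps~1, 2 and 4 are essentially the paper's argument: volume residuals are handled by interior bubbles exactly as in \cite[Lemma~5.3]{CCGMNN18}, and the control bound via the Lipschitz continuity of $\Pi_{[u_a,u_b]}$ is the paper's \Cref{thm.local.efficiency3}. (For the edge terms the paper takes a shortcut and invokes \Cref{hctenrich}(e) directly instead of edge bubbles, but your route works too.)

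Step~3, however, contains a genuine gap. You apply H\"older as
\[
\|D^{2}\bar\psi_{\M,i}(1-\P_{0})\bar\theta_{\M,j}\|_{L^{2}(K)}
\le \|D^{2}\bar\psi_{\M,i}\|_{L^{\infty}(K)}\,\|(1-\P_{0})\bar\theta_{\M,j}\|_{L^{2}(K)}
\]
and then claim that $\|D^{2}\bar\psi_{\M,i}\|_{L^{\infty}(K)}$ is uniformly bounded ``thanks to \Cref{ap}(a) and \Cref{conv.apriori}(a)''. Neither result gives this: \Cref{ap}(a) is an $H^{2}$ bound on the continuous state and \Cref{conv.apriori}(a) is an energy-norm error estimate. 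Since $D^{2}\bar\psi_{\M,i}$ is piecewise constant, $\|D^{2}\bar\psi_{\M,i}\|_{L^{\infty}(K)}=|K|^{-1/2}\|D^{2}\bar\psi_{\M,i}\|_{L^{2}(K)}$, and for $\bar\psi_{i}\in H^{2+\gamma}$ with $\gamma\le 1$ there is no uniform $W^{2,\infty}$ bound available, so this quantity can blow up as $h\to 0$. The paper instead reverses the roles in H\"older,
\[
\|D^{2}\bar\psi_{\M,1}(1-\P_{0})\bar\theta_{\M,1}\|_{L^{2}(K)}
\le \|D^{2}\bar\psi_{\M,1}\|_{L^{2}(K)}\,\|(1-\P_{0})\bar\theta_{\M,1}\|_{L^{\infty}(K)},
\]
so that the first factor is trivially bounded by the global $\|D^{2}_{\rm NC}\bar\psi_{\M,1}\|\le M_{1}$ from \eqref{dissta}. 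The second factor is then handled by inserting $\P_{0}I_{\rm M}\bar\theta_{1}$, using an inverse inequality on the polynomial part $(1-\P_{0})(\bar\theta_{\M,1}-I_{\rm M}\bar\theta_{1})$ to pass from $L^{\infty}(K)$ to $L^{2}(K)$, and the $L^{\infty}$ stability of $\P_{0}$ on the remainder; this produces exactly the four contributions $\|\nabla(\bar\theta_{\M,1}-\bar\theta_{1})\|_{L^{2}(K)}$, $\|\nabla(1-I_{\rm M})\bar\theta_{1}\|_{L^{2}(K)}$, $\|(1-I_{\rm M})\bar\theta_{1}\|_{L^{\infty}(K)}$ and $\|(1-\P_{0})\bar\theta_{1}\|_{L^{\infty}(K)}$. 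Note also that your ``first piece'' does not reduce to $\trinl\bTheta-\bTheta_{\rm M}\trinr_{\rm NC}$ alone: a triangle inequality leaves the additional term $\trinl(1-I_{\rm M})\bTheta\trinr_{1,2,h}$, which is precisely why it appears on the right-hand side of the theorem.
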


\begin{lem}[Local efficiency for state estimator]
	\label{thm.local.efficiency}
	Let $(\bPsi,\bTheta,\bar{\bf u})$ (resp. $(\bPsi_{\rm{M}},\bTheta_{\rm{M}},\bar{\bf u}_h)$) solve the optimality system \eqref{opt_con} (resp. \eqref{discrete.opt}). Then,
	\begin{align*}
	&  \eta_{K,\bPsi_{\rm{M}}}
	\lesssim \tnr{D^2(\bPsi-\bPsi_{\rm{M}})}_{\bL^2(K)}+h_K^2\left(\|\bar{u} - {\bar{u}}_h\|_{L^2(K)}+ \norm{f-f_h}_{L^2(K)}\right),\quad 
	\eta_{E,\bPsi_{\rm{M}}}\lesssim \tnr{D^2_{\rm NC}(\bPsi-\bPsi_{\rm{M}})}_{\bL^2(\Omega(K))},
		\end{align*}
	where $K \in \cT$, $E \in \mathcal{E}(\Omega(K))$ and $f_h$ denotes the piece-wise average of $f$.
\end{lem}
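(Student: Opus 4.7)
The approach is the classical Verfürth bubble-function technique adapted to the fourth-order setting, carried out separately for the edge and volume contributions.

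The edge estimator is the easier piece: the bound follows immediately from Lemma~\ref{hctenrich}.$(e)$ (companion operator stability) applied componentwise to each $\bar\psi_{\rm M,i}$ with the choice $v = \bar\psi_i \in V$, giving
\[
h_E \|[D^2 \bar\psi_{\rm M,i}\,\tau_E]_E\|_{L^2(E)}^2 \;\lesssim\; \min_{v \in V}\|D^2_\NC(\bar\psi_{\rm M,i} - v)\|_{L^2(\Omega(K))}^2 \;\le\; \|D^2_\NC(\bar\psi_i - \bar\psi_{\rm M,i})\|_{L^2(\Omega(K))}^2,
\]
and summing over $i = 1,2$ produces the claimed estimate for $\eta_{E,\bPsi_{\rm M}}$.

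For the volume estimator, I introduce the piecewise polynomial residuals $r_{1,K} := f_h + \mathcal{C}\bar u_h + [\bar\psi_{\rm M,1},\bar\psi_{\rm M,2}]$ and $r_{2,K} := \tfrac12[\bar\psi_{\rm M,1},\bar\psi_{\rm M,1}]$ on $K$, and pick a fixed $H^2_0(K)$-bubble $b_K$ (for instance $b_K = (\lambda_1\lambda_2\lambda_3)^2$). Setting $\varphi_{i,K} := b_K\,r_{i,K}$ extended by zero puts $\varphi_{i,K} \in V$, and norm equivalence on polynomials of fixed degree gives $\|r_{i,K}\|_{L^2(K)}^2 \lesssim \int_K r_{i,K}\varphi_{i,K}\dx$. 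Testing the continuous state equations \eqref{state_eq} with $(\varphi_{1,K},0)$ and $(0,\varphi_{2,K})$ and exploiting the decisive fact that $\Delta^2 \bar\psi_{\rm M,i} = 0$ on $K$ (so $\int_K D^2\bar\psi_{\rm M,i}:D^2\varphi_{i,K}\dx = 0$ for $\varphi_{i,K}\in H^2_0(K)$), I express $\int_K r_{1,K}\varphi_{1,K}\dx$ as the sum of a Hessian error term $\int_K D^2(\bar\psi_1 - \bar\psi_{\rm M,1}):D^2\varphi_{1,K}\dx$, a nonlinear bracket mismatch, an $(f_h - f,\varphi_{1,K})$ data-oscillation term, and a $(\mathcal{C}(\bar u_h - \bar u),\varphi_{1,K})$ control-error term. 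The polynomial inverse estimates $\|D^2\varphi_{i,K}\|_{L^2(K)} \lesssim h_K^{-2}\|r_{i,K}\|_{L^2(K)}$ and $\|\varphi_{i,K}\|_{L^\infty(K)} \lesssim h_K^{-1}\|r_{i,K}\|_{L^2(K)}$, followed by Cauchy–Schwarz for the data and control terms, dividing through by $\|r_{i,K}\|_{L^2(K)}$, and multiplying by $h_K^2$, will give $h_K^2\|r_{i,K}\|_{L^2(K)}$ bounded by the right-hand side of the claim. A final triangle inequality swaps $f_h$ back to $f$ at the cost of $h_K^2\|f-f_h\|_{L^2(K)}$. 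The estimate for $r_{2,K}$ proceeds identically using the second of the state equations in \eqref{state_eq}.

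The main obstacle is the nonlinear bracket mismatch $[\bar\psi_{\rm M,1},\bar\psi_{\rm M,2}] - [\bar\psi_1,\bar\psi_2]$. I plan to handle it by splitting
\[
[\bar\psi_{\rm M,1},\bar\psi_{\rm M,2}] - [\bar\psi_1,\bar\psi_2] = [\bar\psi_{\rm M,1}-\bar\psi_1,\bar\psi_{\rm M,2}] + [\bar\psi_1,\bar\psi_{\rm M,2}-\bar\psi_2],
\]
using the pointwise bound $|[\chi,\lambda]| \le 3\,|D^2\chi|\,|D^2\lambda|$ together with $\|\varphi_{1,K}\|_{L^\infty(K)} \lesssim h_K^{-1}\|r_{1,K}\|_{L^2(K)}$. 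This produces a contribution proportional to $h_K\max\{\|\bar\psi_{\rm M,2}\|_{H^2(K)},\|\bar\psi_1\|_{H^2(K)}\}\cdot\tnr{D^2(\bPsi-\bPsi_{\rm M})}_{\bL^2(K)}$; the local $H^2$ norms are uniformly bounded by the (a priori controlled) discrete and continuous regular solutions, so for small enough $h$ this term is absorbed into the Hessian error on the right-hand side, and the efficiency constant inherits a harmless dependence on $\|\bPsi\|_2$. The second residual $r_{2,K}$ is treated identically, with the role of $\bar\psi_{\rm M,2}$ in the Hessian identity now played by $\bar\psi_{\rm M,1}$ (which is again piecewise quadratic).
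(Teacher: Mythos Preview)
Your approach is correct and essentially matches the paper's: the bubble-function technique for the volume terms (testing \eqref{state_eq} with $b_K^2 r_{i,K}$, exploiting $\Delta^2\bar\psi_{\mathrm M,i}|_K=0$, and splitting the bracket difference) together with Lemma~\ref{hctenrich}.$(e)$ for the edge terms. One minor point: the nonlinear bracket contribution requires no smallness of $h$ to be absorbed --- the factor $\max\{\|D^2\bar\psi_{\mathrm M,2}\|_{L^2(K)},\|D^2\bar\psi_1\|_{L^2(K)}\}$ is bounded by the fixed global quantity $\trinl\bar\Psi\trinr_2$ (cf.\ \eqref{dissta}), which simply enters the hidden constant in $\lesssim$, exactly as in the paper's intermediate bound \eqref{effi.1}.
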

\begin{proof}
For each element $K\in \mathcal T$,  it holds that
\begin{align}\label{effi.1}
h_K^2\| f+\mathcal{C}u_h&+[\bar \psi_{\M,1},\bar \psi_{\M,2}]\|_{L^2(K)}  + h_K^2||[\bar \psi_{\M,1}, \bar \psi_{\M,1}]\|_{L^2(K)}\lesssim h_K^2 \norm{f-f_h}_{L^2(K)}\nonumber\\
+& h_K^2 \norm{\bar u-\bar u_h}_{L^2(K)} +\trinl D^2(\bar \Psi-\bar \Psi_{\M})\trinr_{\bL^2(K)} +\trinl D^2\bar \Psi\trinr_{\bL^2(K)}\trinl D^2(\bar \Psi-\bar \Psi_{\M})\trinr_{\bL^2(K)}. 
\end{align}
The proof of \eqref{effi.1} imitates the standard bubble functions arguments as in \cite[Lemma 5.3]{CCGMNN18}. In the proof therein for the first term in the left hand side of \eqref{effi.1}, set $\sigma:=(f_h+\mathcal{C}u_h+[\bar \psi_{\M,1},\bar \psi_{\M,2}])b_K^2$ in $K$, and zero in $\Omega \setminus K$, where $b_K$ denotes the standard interior bubble function \cite{verfurth}. Then the state equation \eqref{state_eq} with the test function $(\sigma,0)$, $\Delta^2 \bar \psi_{\M,1} =0$ and $\sigma\in H^2_0(K)$ prove \eqref{effi.1}.
The term $ ||[\bar \psi_{\M,1}, \bar \psi_{\M,1}]\|_{L^2(K)}$ can be estimated similar to the above analysis.

\smallskip

 \noindent For the edge estimator term, Lemma \ref{hctenrich}.$(e)$ with $v=\bpsi_{\M,1}$ implies, for $E \in \mathcal{E}(\Omega(K))$,
\begin{align}\label{edge_effi}
h_E \|[D^2 \bar \psi_{\M,1}\tau_E]_E\|_{L^2(E)}^2 
\lesssim \|D^2_{\rm NC}(\bar \psi_{\M,1} - \bar \psi_{1})\|^2_{L^2(\Omega(K))}.
\end{align}
Analogous arguments lead to similar result for the  edge estimator $ \|[D^2 \bar \psi_{\M,2}\tau_E]_E\|_{L^2(E)}^2$.
\end{proof}
\begin{lem}[Local efficiency for adjoint estimator]
	\label{thm.local.efficiency2}
		Let $(\bPsi,\bTheta,\bar{\bf u})$ (resp. $(\bPsi_{\rm{M}},\bTheta_{\rm{M}},\bar{\bf u}_h)$) solve the optimality system \eqref{opt_con} (resp. \eqref{discrete.opt}). Then,
	\begin{align}
	& \eta_{K,\bTheta_{\rm{M}}}
	\lesssim \tnr{D^2(\bPsi-\bPsi_{\rm{M}})}_{\bL^2(K)}+	\tnr{D^2(\bTheta-\bTheta_{\rm{M}})}_{\bL^2(K)}+h^2_K\trinl\Psi_{d}-\Psi_{d,h}\trinr_{\bL^2(K)}+\tnr{\bPsi-\bPsi_{\rm{M}}}_{\bL^2(K)} +\trinl\nabla(\bTheta_{\rm M}-\bTheta) \trinr_{\bL^2(K)}\nonumber\\
	&\quad + \trinl\nabla(1-I_{\rm M}) \bTheta\trinr_{\bL^2(K)} +\trinl(1-I_{\rm M}) \bTheta\trinr_{\bL^\infty(K)}+\trinl(1-\P_0)\bTheta \trinr_{\bL^\infty(K)},\;  \eta_{E,\bTheta_{\rm{M}}}\lesssim \tnr{D^2_{\rm NC}(\bTheta-\bTheta_{\rm{M}})}_{\bL^2(\Omega(K))},\nonumber
	\end{align}
	where $K \in \cT$, $E \in \mathcal{E}(\Omega(K))$ and $\Psi_{d,h}$ denotes the piece-wise average of $\Psi_d$.
\end{lem}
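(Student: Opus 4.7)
The estimator $\eta_{K,\bTheta_{\rm M}}^2$ splits into the residual part $\eta_{K,\mathrm{res},\bTheta_{\rm M}}^2$ and the projection part $\eta_{K,\P_0,\bTheta_{\rm M}}^2$, while $\eta_{E,\bTheta_{\rm M}}$ is an edge jump term. I will handle these three pieces separately, closely following the state-equation efficiency proof of Lemma~\ref{thm.local.efficiency} (which itself follows \cite[Lemma 5.3]{CCGMNN18}) for the edge and residual parts, and using a dedicated interpolation argument for the $\P_0$ part.

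\textbf{Edge term.} For any $E\in\mathcal{E}(\Omega(K))$, apply Lemma~\ref{hctenrich}.$(e)$ componentwise with $v_{\rm M}:=\bar\theta_{{\rm M},i}$ and $v:=\bar\theta_i$, exactly as in \eqref{edge_effi}. Summing the two components gives $h_E\bigl(\|[D^2\bar\theta_{{\rm M},1}\tau_E]_E\|_{L^2(E)}^2+\|[D^2\bar\theta_{{\rm M},2}\tau_E]_E\|_{L^2(E)}^2\bigr)\lesssim \trinl D^2_{\rm NC}(\bTheta-\bTheta_{\rm M})\trinr_{\bL^2(\Omega(K))}^2$, which is the claimed edge bound.

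\textbf{Residual term.} Using the weak adjoint equation \eqref{adj_eq}, the strong form on each element $K$ reads $\Delta^2\bar\theta_1=\bar\psi_1-\psi_{d,1}-[\bar\psi_1,\bar\theta_2]+[\bar\psi_2,\bar\theta_1]$, together with an analogous equation for $\bar\theta_2$. Since $\bar\theta_{{\rm M},i}|_K\in P_2(K)$, we have $\Delta^2\bar\theta_{{\rm M},i}=0$ pointwise in the interior of $K$, so the pointwise residual $r_1:=\bar\psi_{{\rm M},1}-\psi_{d,1}-[\bar\psi_{{\rm M},1},\bar\theta_{{\rm M},2}]+[\bar\psi_{{\rm M},2},\bar\theta_{{\rm M},1}]$ satisfies
\[
r_1=\Delta^2(\bar\theta_1-\bar\theta_{{\rm M},1})+(\bar\psi_{{\rm M},1}-\bar\psi_1)-\bigl\{[\bar\psi_{{\rm M},1},\bar\theta_{{\rm M},2}]-[\bar\psi_1,\bar\theta_2]\bigr\}+\bigl\{[\bar\psi_{{\rm M},2},\bar\theta_{{\rm M},1}]-[\bar\psi_2,\bar\theta_1]\bigr\}.
\]
Let $r_{1,h}$ be the same expression with $\psi_{d,1}$ replaced by $\psi_{d,1,h}$, and set $\sigma:=r_{1,h}b_K^2$ extended by zero, so $\sigma\in H^2_0(K)\subset V$. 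The standard interior bubble function estimates \cite{verfurth} yield $\|r_{1,h}\|_{L^2(K)}^2\lesssim(r_{1,h},\sigma)_K$, $\|\sigma\|_{L^2(K)}\lesssim\|r_{1,h}\|_{L^2(K)}$, and $\|D^2\sigma\|_{L^2(K)}\lesssim h_K^{-2}\|r_{1,h}\|_{L^2(K)}$. Pairing the above identity with $\sigma$ and integrating by parts twice on the biharmonic term (no boundary contribution, as $\sigma\in H^2_0(K)$) converts it into $A(\bar\theta_1-\bar\theta_{{\rm M},1},\sigma)_K$. Decomposing the bracket differences bilinearly, e.g.\ $[\bar\psi_{{\rm M},1},\bar\theta_{{\rm M},2}]-[\bar\psi_1,\bar\theta_2]=[\bar\psi_{{\rm M},1}-\bar\psi_1,\bar\theta_{{\rm M},2}]+[\bar\psi_1,\bar\theta_{{\rm M},2}-\bar\theta_2]$, and using H\"older's inequality together with the $L^\infty$-boundedness of the (piecewise constant) Hessians $D^2\bar\theta_{{\rm M},j}$ and of $D^2\bar\psi_j$ (from the a priori regularity of the regular solution) produces
$h_K^2\|r_{1,h}\|_{L^2(K)}\lesssim\trinl D^2(\bPsi-\bPsi_{\rm M})\trinr_{\bL^2(K)}+\trinl D^2(\bTheta-\bTheta_{\rm M})\trinr_{\bL^2(K)}+\trinl\bPsi-\bPsi_{\rm M}\trinr_{\bL^2(K)}+h_K^2\trinl\Psi_d-\Psi_{d,h}\trinr_{\bL^2(K)}$. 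The triangle inequality reintroduces $\psi_{d,1}$ and the second component is handled identically.

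\textbf{Projection term and main obstacle.} Since $D^2\bar\psi_{{\rm M},i}$ is constant on $K$ and uniformly bounded, $\|D^2\bar\psi_{{\rm M},i}(1-\P_0)\bar\theta_{{\rm M},j}\|_{L^2(K)}\lesssim|K|^{1/2}\|(1-\P_0)\bar\theta_{{\rm M},j}\|_{L^\infty(K)}$. I decompose
\[
(1-\P_0)\bar\theta_{{\rm M},j}=(1-\P_0)(\bar\theta_{{\rm M},j}-I_{\rm M}\bar\theta_j)+(1-\P_0)(I_{\rm M}-1)\bar\theta_j+(1-\P_0)\bar\theta_j.
\]
The third contribution appears directly in the statement as $\trinl(1-\P_0)\bTheta\trinr_{\bL^\infty(K)}$. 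Using $L^\infty$-stability of $\P_0$, the second is bounded by $\trinl(1-I_{\rm M})\bTheta\trinr_{\bL^\infty(K)}$. For the first, $\bar\theta_{{\rm M},j}-I_{\rm M}\bar\theta_j\in V_{\rm M}|_K$ is quadratic; its Hessian equals $D^2\bar\theta_{{\rm M},j}-\P_0 D^2\bar\theta_j$ by Lemma~\ref{Morley_Interpolation}.$(a)$, and shift-invariance together with an inverse estimate on $P_2(K)$ controls its oscillation by $h_K\|\nabla(\bar\theta_{{\rm M},j}-I_{\rm M}\bar\theta_j)\|_{L^\infty(K)}$, which after a further triangle inequality and a standard inverse estimate splits into the $H^1$ error $\trinl\nabla(\bTheta_{\rm M}-\bTheta)\trinr_{\bL^2(K)}$ and the interpolation error $\trinl\nabla(1-I_{\rm M})\bTheta\trinr_{\bL^2(K)}$. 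The main technical obstacle is this $\P_0$ term: unlike the residual it carries no intrinsic $h_K$ scaling, so care is needed to trade $L^\infty$ for $L^2$ norms via inverse estimates without introducing unfavourable negative powers of $h_K$; this is precisely why the three auxiliary consistency terms $\trinl\nabla(1-I_{\rm M})\bTheta\trinr$, $\trinl(1-I_{\rm M})\bTheta\trinr_{\bL^\infty}$, $\trinl(1-\P_0)\bTheta\trinr_{\bL^\infty}$ appear in the efficiency bound.
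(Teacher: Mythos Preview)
Your overall strategy---bubble functions for the residual, the companion-operator estimate of Lemma~\ref{hctenrich}.$(e)$ for the edge term, and a triangle-inequality decomposition through $I_{\rm M}\bar\theta_j$ for the $\P_0$ term---is exactly the route the paper takes, and your treatment of the edge and $\P_0$ parts matches the paper's argument essentially line by line (the paper also reaches $\|(1-\P_0)\bar\theta_{{\rm M},j}\|_{L^\infty(K)}\lesssim \|\nabla(\bar\theta_{{\rm M},j}-\bar\theta_j)\|_{L^2(K)}+\|\nabla(1-I_{\rm M})\bar\theta_j\|_{L^2(K)}+\|(1-I_{\rm M})\bar\theta_j\|_{L^\infty(K)}+\|(1-\P_0)\bar\theta_j\|_{L^\infty(K)}$ via an inverse inequality and the $L^2$ projection estimate).

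There is, however, one imprecise step in your residual argument: you invoke ``$L^\infty$-boundedness of \ldots\ $D^2\bar\psi_j$ (from the a priori regularity of the regular solution)''. The available regularity is only $\bar\Psi\in\bH^{2+\gamma}(\Omega)$ with $\gamma\in(1/2,1]$, so $D^2\bar\psi_j\in H^\gamma(\Omega)$, which does \emph{not} embed into $L^\infty$ in two dimensions. Likewise, the piecewise constant discrete Hessians $D^2\bar\theta_{{\rm M},j}|_K$ are not uniformly bounded in $L^\infty$ across elements; only their $L^2$ norms are controlled by \eqref{dissta}/\eqref{companion.adjoint}. The fix is immediate and is what the paper (via \cite[Lemma~5.3]{CCGMNN18}) actually uses: in the H\"older split put $\sigma$ in $L^\infty$ instead, i.e.\ $\int_K[\eta,\chi]\sigma\le\|D^2\eta\|_{L^2(K)}\|D^2\chi\|_{L^2(K)}\|\sigma\|_{L^\infty(K)}$, and then apply the inverse estimate $\|\sigma\|_{L^\infty(K)}\lesssim h_K^{-1}\|r_{1,h}\|_{L^2(K)}$ (valid because $r_{1,h}\in P_2(K)$). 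This yields exactly the bound \eqref{effi.4} with the local factors $\trinl D^2\bar\Psi\trinr_{\bL^2(K)}$, $\trinl D^2\bar\Theta\trinr_{\bL^2(K)}$ absorbed into~$\lesssim$. With this correction your proof is complete and coincides with the paper's.
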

\begin{proof}
	For each element $K\in \mathcal T$,  it can be shown that
	\begin{align}\label{effi.4}
	h_K^2&\|\bar \psi_{\M,1}-\psi_{d,1} -[\bar \psi_{\M,1}, \bar \theta_{\M,2}]+ [\bar \psi_{\M,2},\bar \theta_{\M,1}]\|_{L^2(K)}+h_K^2\|\bar \psi_{\M,2}-\psi_{d,2}+[\bar \psi_{\M,1}, \bar \theta_{\M,1}]\|_{L^2(K)}\le h_K^2 \trinl{\Psi_{d}-\Psi_{d,h}}\trinr_{\bL^2(K)}\nonumber\\
	&+h_K^2\trinl\bar \Psi-\bar \Psi_{\M}\trinr_{\bL^2(K)}+\trinl D^2\bar \Theta\trinr_{\bL^2(K)}\trinl D^2(\bar \Psi-\bar \Psi_{\M})\trinr_{\bL^2(K)}+(1+\trinl D^2\bar \Psi\trinr_{\bL^2(K)})\trinl D^2(\bar \Theta-\bar \Theta_{\M})\trinr_{\bL^2(K)}.
	\end{align}
The proof of \eqref{effi.4} follows from the standard bubble functions technique. In the proof therein for the first term in the left hand side of \eqref{effi.4}, set $\sigma:=(\bar \psi_{\M,1}-\psi_{d,h,1} -[\bar \psi_{\M,1}, \bar \theta_{\M,2}]+ [\bar \psi_{\M,2},\bar \theta_{\M,1}])b_K^2 $ in $K$, and zero in $\Omega \setminus K$. The adjoint system (\ref{adj_eq})  with the test function $(\sigma,0)$, and the symmetric property of $b(\bullet,\bullet,\bullet)$ show that
$
\int_K D^2\bar\theta_1:D^2\sigma\dx-\int_K(\bar \psi_{1}-\psi_{d,1})\sigma\dx +\int_K ([\bar \psi_{1}, \bar \theta_{2}]- [\bar \psi_{2},\bar \theta_{1}] )\sigma\dx =0.
$
The combination of this, $\Delta^2 \bar\theta_{\M,1}=0$ and the arguments in the proof of \cite[Lemma 5.3]{CCGMNN18} prove \eqref{effi.4}. The estimates for the second term in the left hand side of \eqref{effi.4} is analogous to that of the first term. 
	
	\smallskip
	
	\noindent Consider $\| D^2\bpsi_{\M,1}(1-\P_0)\btheta_{\M,1}\|_{L^2(K)}$, $K \in \cT$.  The H\"older's inequality shows that
	\begin{equation}\label{step3}
	\| D^2\bpsi_{\M,1}(1-\P_0)\btheta_{\M,1}\|_{L^2(K)} \le  \| D^2\bpsi_{\M,1}\|_{L^2(K)} \|(1-\P_0)\btheta_{\M,1}\|_{L^\infty(K)}.
	\end{equation}
	\noindent The triangle inequality with $\P_0 I_\M \btheta_{1}$ leads to $\|(1-\P_0) \btheta_{\M,1}\|_{L^\infty(K)}\le \|(1-\P_0)(\btheta_{\M,1}-I_\M \btheta_{1}) \|_{L^\infty(K)}+\|(1-\P_0) I_\M \btheta_{1} \|_{L^\infty(K)}$. The inverse inequality \cite[Theroem 3.2.6]{Ciarlet} for the first term, triangle inequality with $(1-\P_0)\btheta_1$ for the second term, {  {projection estimate for $\P_0$ in $L^2(K)$ \cite[Proposition 1.135]{ErnJLU_2004}}} and the boundedness property of $\P_0$ prove
	\begin{align*}
	\|(1-\P_0) &\btheta_{\M,1}\|_{L^\infty(K)}
	\lesssim h^{-1} \|(1-\P_0)(\btheta_{\M,1}-I_\M \btheta_{1}) \|_{L^2(K)}+\|(I_\M-1) \btheta_{1}\|_{L^\infty(K)}\\
	&\hspace{4cm}+\|(1-\P_0)\btheta_{1} \|_{L^\infty(K)}+\|\P_0(\btheta_{1}-  I_\M \btheta_{1}) \|_{L^\infty(K)}\\
	&\lesssim ( \|\nabla(\btheta_{\M,1}-\btheta_{1}) \|_{L^2(K)}+ \|\nabla(1-I_\M) \btheta_{1}\|_{L^2(K)} )+\|((1-I_\M) \btheta_{1}\|_{L^\infty(K)}+\|(1-\P_0)\btheta_{1} \|_{L^\infty(K)}.
	\end{align*}
	This with \eqref{step3} result in
	\begin{align}
	\| D^2\bpsi_{\M,1}(1-\P_0)\btheta_{\M,1}\|_{L^2(K)} &\lesssim  \| D^2\bpsi_{\M,1}\trinr_{L^2(K)} \Big(\|\nabla(\btheta_{\M,1}-\btheta_{1}) \|_{L^2(K)}+ \|\nabla(1-I_\M)\btheta_{1}) \|_{L^2(K)}\nonumber\\
	&\qquad \qquad +\|(1-I_\M) \btheta_{1}\|_{L^\infty(K)}+\|(1-\P_0)\btheta_{1} \|_{L^\infty(K)}\Big).\label{efficiency.p0}
	\end{align}
	From \eqref{dissta}, $\trinl D^2\bPsi_{\M}\trinr_{L^2(K)} \le M$. The estimates for the remaining terms $ \| D^2\bpsi_{\M,1}(1-\P_0)\btheta_{\M,2}\|_{L^2(K)},~~\| D^2\bpsi_{\M,2}(1-\P_0)\btheta_{\M,1}\|_{L^2(K)}$
	follow from similar arguments and hence the details are omitted for brevity.
Lemma \ref{hctenrich}.$(e)$ leads to the desired estimate for the edge estimator $\eta_{E,\bTheta_{\rm{M}}}$. Analogous terms as the last term in the right hand side of \eqref{efficiency.p0} are dealt with in \cite[Theorem 4.10]{SCTGAKN2015}.
\end{proof}
\begin{lem}[Local efficiency for control estimator]
	\label{thm.local.efficiency3}
	Let $(\bPsi,\bTheta,\bar{\bf u})$ (resp. $(\bPsi_{\rm{M}},\bTheta_{\rm{M}},\bar{\bf u}_h)$) solve the optimality system \eqref{opt_con} (resp. \eqref{discrete.opt}). Then, 
	$\eta_{K,\bar u_h} \le \alpha^{-1}\tnr{\bTheta-\bTheta_{\rm{M}}}_{\bL^2(K)}+\|\bar{u} - {\bar{u}}_h\|_{L^2(K)}.$

\end{lem}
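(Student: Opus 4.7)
The plan is to estimate $\eta_{K,\bar u_h} = \|\widetilde{u}_h - \bar u_h\|_{L^2(K)}$ by introducing the continuous optimal control $\bar u$ as an intermediate term and exploiting the projection formulas that characterize both $\bar u$ and $\widetilde{u}_h$.

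First I would apply the triangle inequality to split
\[
\|\widetilde{u}_h - \bar u_h\|_{L^2(K)} \le \|\widetilde{u}_h - \bar u\|_{L^2(K)} + \|\bar u - \bar u_h\|_{L^2(K)}.
\]
The second term is already the quantity appearing on the right-hand side of the claimed bound, so nothing more needs to be done with it. For the first term, I would invoke the pointwise representation formulas \eqref{rep} and \eqref{defn_post_proc}, which express $\bar u$ and $\widetilde{u}_h$ respectively as $\Pi_{[u_a,u_b]}\bigl(-\tfrac{1}{\alpha}\mathcal{C}^*\bar\theta_1\bigr)$ and $\Pi_{[u_a,u_b]}\bigl(-\tfrac{1}{\alpha}\mathcal{C}^*\bar\theta_{\M,1}\bigr)$.

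Next I would use the nonexpansiveness (Lipschitz constant one) of the pointwise projection $\Pi_{[u_a,u_b]}$ onto the interval $[u_a,u_b]$ to deduce
\[
\|\widetilde{u}_h - \bar u\|_{L^2(K)} \le \frac{1}{\alpha}\|\mathcal{C}^*(\bar\theta_{\M,1} - \bar\theta_1)\|_{L^2(K)}.
\]
Since $\mathcal{C}^*$ is the adjoint of the extension-by-zero operator (hence acts as a restriction on $\omega$) its action does not increase the $L^2$ norm on $K$, so the right-hand side is bounded by $\alpha^{-1}\|\bar\theta_1 - \bar\theta_{\M,1}\|_{L^2(K)} \le \alpha^{-1}\trinl \bar\Theta - \bar\Theta_{\M} \trinr_{\bL^2(K)}$. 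Combining this with the earlier split gives the asserted estimate.

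There is no real obstacle here: the result is a direct consequence of the triangle inequality together with the one-Lipschitz property of $\Pi_{[u_a,u_b]}$, so the proof will be quite short. The only mild point to mention is that the constant $\alpha^{-1}$ comes from the regularization parameter and is mesh-independent, matching the formulation of the lemma.
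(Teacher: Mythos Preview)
Your proposal is correct and follows essentially the same route as the paper: split $\|\widetilde{u}_h-\bar u_h\|_{L^2(K)}$ via the triangle inequality with $\bar u$, then use the representation formulas \eqref{rep}, \eqref{defn_post_proc} together with the $1$-Lipschitz property of $\Pi_{[u_a,u_b]}$ to bound $\|\widetilde{u}_h-\bar u\|_{L^2(K)}$ by $\alpha^{-1}\tnr{\bTheta-\bTheta_{\rm M}}_{\bL^2(K)}$. The only cosmetic difference is the order in which the triangle inequality and the Lipschitz estimate are applied.
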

\begin{proof} The definitions \eqref{rep}, \eqref{defn_post_proc} and the Lipschitz property of operator $\Pi_{[u_{a},u_{b}]}$ show
$$\| \bar u-\widetilde{u}_h\|_{L^2(K)} \le \|  \Pi_{[u_{a},u_{b}]}( - \alpha^{-1}(\mathcal{C}^*({\bar \theta}_{1}-\bar{\theta}_{\M,1})))\|_{L^2(K)} \leq \alpha^{-1} \tnr{\bTheta-\bar{\Theta}_{\rm{M}}}_{\bL^2(K)}.$$
	This and  a triangle inequality prove $
	\| \widetilde{u}_h-\bar u_h \|_{L^2(K)} \leq \alpha^{-1} \tnr{\bTheta-\bar{\Theta}_{\rm{M}}}_{\bL^2(K)}+\| \bar{u}-\bar u_h \|_{L^2(K)} $ and concludes the proof of local efficiency for the control variable.
\end{proof}
\begin{proof}[\bf Proof of Theorem \ref{thm.global.efficiency}] Recall the definition of the complete estimator $\eta$ from (\ref{global.reliability}). The summation over all the element and edges of the triangulation $\mathcal T$, and the local efficiency  results in Lemmas \ref{thm.local.efficiency}-\ref{thm.local.efficiency3} show that
\begin{align}
\eta^2&=\sum_{K\in\cT }\eta^2_{K,\bPsi_{\rm{M}}}+\sum_{K\in\cT }\eta^2_{K,\bTheta_{\rm{M}}}+ \sum_{K\in\cT }\eta^2_{K,\bar u_h}+\sum_{E\in\cE }\eta^2_{E,\bPsi_{\rm{M}}}+\sum_{E\in\cE }\eta^2_{E,\bTheta_{\rm{M}}}\nonumber\\
&\lesssim \tnr{\bPsi-\bPsi_{\rm{M}}}_{\rm NC}^2+	\tnr{\bTheta-\bTheta_{\rm{M}}}_{\rm NC}^2+\|\bar{u} - {\bar{u}}_h\|_{L^2(\omega)}^2 +\sum_{K \in \cT}h_k^4 ( \norm{f-f_h}_{L^2(K)}^2+\trinl \Psi_{d}-\Psi_{d,h}\trinr_{L^2(K)}^2) +\tnr{\bPsi-\bPsi_{\rm{M}}}_{}^2 \nonumber\\
& \qquad +	\tnr{\bTheta-\bTheta_{\rm{M}}}_{}^2+\tnr{\bTheta-\bTheta_{\rm{M}}}_{1,2,h}^2+ \trinl (1-I_{\rm M}) \bTheta \trinr_{1,2,h}^2
 +\trinl(1-I_{\rm M}) \bTheta\trinr_{0,\infty}^2+\trinl(1-\P_0)\bTheta\trinr_{0,\infty}^2.\nonumber
\end{align}
This and Lemma \ref{staest} result in
\begin{align}
\eta^2&\lesssim \tnr{\bPsi-\bPsi_{\rm{M}}}_{\rm NC}^2+	\tnr{\bTheta-\bTheta_{\rm{M}}}_{\rm NC}^2+\|\bar{u} - {\bar{u}}_h\|_{L^2(\omega)}^2 +\sum_{K \in \cT}h_k^4\left( \norm{f-f_h}_{L^2(K)}^2+\trinl \Psi_{d}-\Psi_{d,h}\trinr_{L^2(K)}^2\right)\nonumber\\
&\qquad + \trinl (1-I_{\rm M})\bTheta \trinr_{1,2,h}^2 +\trinl(1-I_{\rm M}) \bTheta\trinr_{0,\infty}^2+\trinl(1-\P_0)\bTheta\trinr_{0,\infty}^2.\nonumber
\end{align}
Here the  constant absorbed in $`\lesssim'$ depends on the shape-regularity of $\mathcal T$. This concludes the proof. 
\end{proof}


\section{Numerical results}\label{sec.numericalresults}
The results of the numerical experiments that support the {\it a priori} and {\it a posteriori} estimates are presented in this section. 
\subsection{Preliminaries}\label{preliminaries2}
\noindent The state and adjoint variables are discretised using the Morley FE and the control variable is discretised using piecewise constant functions. The discrete solution $( \bar{\Psi}_\M, \bar{\Theta}_\M, {\bar{ u}}_h)$ is computed using a combination of Newtons' method in an inner loop and primal dual active set strategy in an outer loop, see \cite[Section 6.1]{SCNNDS} for the details of the implementation procedure for the a priori case for a different choice of the trilinear form. 
The initial guess for $(\bPsi_\M,\bTheta_\M)$ in the Newton's iterative scheme is chosen as the discrete solution to the biharmonic part of the discrete state and adjoint equations in \eqref{state_eq1} and \eqref{adj_eq1}. At each iteration of primal dual active set algorithm, the Newtons' method converges in ten iterations when the errors between final level and the penultimate level in Euclidean norm  is less than $10^{-9}$. The primal dual active set algorithm terminates within four steps.

\smallskip
\noindent
The numerical experiments are performed over the uniform and adaptive refinements. The uniform mesh refinement has been done by red-refinement criteria, where each triangle is subdivided into four sub-triangles by connecting the midpoints of the edges. The standard adaptive algorithm  Solve-Estimate-Mark-Refinement \cite{verfurth,CCGMNN18} is used for the adaptive refinement, which is described in Section \ref{sec.adaptivemesh}.

\noindent
 Let $\bPsi_\ell$ be the discrete solution $\bPsi_\M$ at the $\ell$th level for $\ell=1,2,3,..$ and define $e_\ell(\bPsi):=\trinl \bPsi-\bPsi_\ell \trinr_{\NC}.$
The order of convergence in the energy norm at $\ell$th level for $\bPsi$ is computed as
${\rm{Order(\ell)}}:={\rm{log}}\big({e_\ell(\bPsi)}/{e_{\ell+1}(\bPsi)}\big)/{\rm{log}}(h_{\ell}/ h_{\ell+1})$ (resp. $ {\rm{Order(\ell)}}:={\rm{log}}\big({e_\ell(\bPsi)}/{e_{\ell+1}(\bPsi)}\big)/{\rm{log}}(\rm{NDOF}_{\ell}/ \rm{NDOF}_{\ell+1})$)  for uniform refinements (resp. {adaptive refinements}), 
where $h_{\ell}$ and $\rm{NDOF}_\ell$ denote the mesh-size and  number of degrees of freedom at $\ell$th level  triangulation $\mathcal{T_\ell}$. The total number of degrees of freedom is NDOF $:=  2 \text{ dim }(\bV_\M)+ \text{ dim }(U_{h,ad})$. Finally, the total error is a sum of $\trinl\bPsi-\bPsi_{\M}\trinr_{\NC}$, $\tnr{\bTheta-\bTheta_{\rm{M}}}_{\rm NC}$ and $\|\bar{u} - {\bar{u}}_h\|$.

\smallskip
\noindent Two examples  are presented to illustrate the {\it a priori} and {\it a posteriori} reliability and efficiency estimates with $\omega=\Omega$ so that ${\mathcal C}=\rm{I}$. The first example is considered over unit square domain where the solution of \vket is sufficiently smooth and the second example is over an L-shaped domain where the solution of \vket belongs to $\bV \cap \bH^{2+\gamma}(\Omega)$ with $\gamma \approx 0.5445$.
\subsection{Uniform refinement}
\begin{example}\label{eg1} {\bf (Convex Domain)} 
Let the computational domain be $\Omega=(0,1)^2$. The model problem is constructed in such a way that the exact solution is known. The data in the distributed optimal control problem are chosen as 
$
\bar{\psi}_1=\bar{\psi}_2=\sin^2(\pi x)\sin^2(\pi y),\;  \bar{\theta}_1=\bar{\theta}_2=x^2 y^2 (1-x)^2 (1-y)^2, \; \bar{u}(x)=\Pi_{[-750,-50]}(-{1}/{\alpha} \: \bar{\theta}_1(x)), 
$
$\alpha=10^{-5},$ where
$\bar{\Psi}=(\bar{\psi}_1,\bar{\psi}_2)$ and $\bar{\Theta}=(\bar{\theta}_1,\bar{\theta}_2)$ denote the optimal state and adjoint variables.
The source terms $f, g$ and observation $\bar{\Psi}_d=({\bar{\psi}_{d,1},\bar{\psi}_{d,2}})$ for $\bar{\Psi}$ are then computed using $
f=\Delta^2\bar{\psi}_1-[\bar{\psi}_1,\bar{\psi}_2]- \bar{u},\quad g=\Delta^2\bar{\psi}_2+\half[\bar{\psi}_1,\bar{\psi}_1]$ and 
${\bar{\psi}_{d,1}}=\bar{\psi}_1-\Delta^2\bar{\theta}_1,\quad {\bar{\psi}_{d,2}}=\bar{\psi}_2-\Delta^2\bar{\theta}_2+[\bar{\psi}_1,\bar{\theta}_1].$
\end{example}
\noindent The relative errors and orders of convergence for the state, adjoint and control variables and the combined relative error and order of convergence are presented in Table \ref{table.squaredomain}. Since $\O$ is convex, Theorem \ref{conv.apriori} predicts linear order of convergence for the state and adjoint variables (resp. control variable) in the energy (resp. $L^2$) norm. These theoretical rates of convergence are confirmed by the numerical outputs. 
\begin{table}[ht]
		\footnotesize
		\begin{center}
			\begin{tabular}{|c||c|c||c|c ||c| c|| c|c|}
					\hline 
	$h$	& $\frac{\trinl\bPsi-\bPsi_{\M}\trinr_{\NC}}{\trinl\bPsi \trinr_{\NC}}	$ & Order& $\frac{\tnr{\bTheta-\bTheta_{\rm{M}}}_{\rm NC}}{\tnr{\bTheta}_{\rm NC}}$& Order& $\frac{\|\bar{u} - {\bar{u}}_h\|}{\|\bar{u}\|}$& Order& Total Error & Order\\
				\hline 				\hline 
				  0.2500 &   1.208162 & -- &   1.793806 & -- &   1.638183 & -- &   1.574966 & --\\
				0.1250 &   0.654690 &       0.88 &   0.730500 &       1.30 &   0.581509 &       1.49 &   0.592373 &       1.41 \\
				0.0625 &   0.357561 &       0.87 &   0.377143 &       0.95 &   0.175353 &       1.73 &   0.202300 &       1.55 \\
				0.0312 &   0.183915 &       0.96 &   0.190428 &       0.99 &   0.055375 &       1.66 &   0.074381 &       1.44 \\
				0.0156 &   0.092662 &       0.99 &   0.095447 &       1.00 &   0.021294 &       1.38 &   0.031846 &       1.22 \\
				0.0078 &   0.046422 &       1.00 &   0.047753 &       1.00 &   0.009674 &       1.14 &   0.015107 &       1.08 \\
				\hline 
			\end{tabular}
		\end{center}
		\vspace{-0.2in}
		\caption{\small Errors and orders of convergence for state, adjoint and control variables in Example \ref{eg1}}
		\label{table.squaredomain}
	\end{table}
	
\begin{example}\label{eg2} {\bf (Non-convex Domain)} 
Consider the non-convex L-shaped domain  $\Omega=(-1,1)^2 \setminus\big{(}[0,1)\times(-1,0]\big{)}$. The source terms $f,g$ and the observation $\Psi_d=({\psi_{d,1},\psi_{d,2}})$ are chosen such that the model problem has the exact singular solution {{\cite[Section 3.4.1]{Grisvard}}} given by 
$
\bar{\psi}_1=\bar{\psi}_2=\bar{\theta}_1=\bar{\theta}_2=(r^2 \cos^2\theta-1)^2 (r^2 \sin^2\theta-1)^2 r^{1+ \gamma}g_{\gamma,\omega}(\theta)
$
where $ \gamma\approx 0.5444837367$ is a non-characteristic 
root of $\sin^2( \gamma\omega) =  \gamma^2\sin^2(\omega)$, $\omega=\frac{3\pi}{2}$, and
$g_{\gamma,\omega}(\theta)=(\frac{1}{\gamma-1}\sin ((\gamma-1)\omega)-\frac{1}{ \gamma+1}\sin(( \gamma+1)\omega))(\cos(( \gamma-1)\theta)-\cos(( \gamma+1)\theta))$ 
$-(\frac{1}{\gamma-1}\sin(( \gamma-1)\theta)-\frac{1}{ \gamma+1}\sin(( \gamma+1)\theta))
(\cos(( \gamma-1)\omega)-\cos(( \gamma+1)\omega)).$ 
The exact control $\bar{u}$ is chosen as $\bar{u}(x)={\Pi}_{[-600,-50]} ( -
1/ \alpha \: \bar{\theta}_1(x) )$, where $\alpha=10^{-3}$. 
\end{example}
\begin{table}[ht]
		\footnotesize
		\begin{center}
			\begin{tabular}{|c|c||c|c||c|c ||c | c|| c|c|}
					\hline
				$h$ &NDOF	& $\frac{\trinl\bPsi-\bPsi_{\M}\trinr_{\NC}}{\trinl\bPsi \trinr_{\NC}}	$ & Order& $\frac{\tnr{\bTheta-\bTheta_{\rm{M}}}_{\rm NC}}{\tnr{\bTheta}_{\rm NC}}$& Order& $\frac{\|\bar{u} - {\bar{u}}_h\|}{\|\bar{u}\|}$& Order& Total Error & Order\\
				\hline 				\hline 
				
				   0.3536 &156 &   1.371575 & -- &   1.355646 & -- &   0.760376 & -- &   0.812881 & --\\
				0.1768 &740&   0.875686 &       0.65 &   0.906115 &       0.58 &   0.498261 &       0.61 &   0.532436 &       0.61 \\
				0.0884 & 3204 &  0.502780 &       0.80 &   0.508682 &       0.83 &   0.197497 &       1.34 &   0.224325 &       1.25 \\
				0.0442 & 13316 &   0.270684 &       0.89 &   0.268696 &       0.92 &   0.072470 &       1.45 &   0.089636 &       1.32 \\
				0.0221 & 54276 &  0.143731 &       0.91 &   0.141920 &       0.92 &   0.029279 &       1.31 &   0.039162 &       1.19 \\

				\hline 
			\end{tabular} 
			\vspace{-0.1in}
			\caption{\small Errors and orders of convergence  for state, adjoint and control variables in Example \ref{eg2}}
			\label{table.Lshapeddomain}
		\end{center}
		\vspace{-0.2in}
	\end{table} 
\noindent Table \ref{table.Lshapeddomain} shows error estimates and the convergence rates of the state, adjoint and  control variables. Since $\Omega$ is non-convex, only suboptimal orders of convergence for the state and adjoint variables in the energy norm are obtained as predicted by Theorem~\ref{conv.apriori}.   The numerical results show a better convergence rate for control which probably indicates that the numerical performance is carried out in the non-asymptotic region. 
%

\subsection{Adaptive mesh refinement}\label{sec.adaptivemesh}
The  standard adaptive algorithm: Solve-Estimate-Mark-Refine is used for the adaptive mesh-refinement. The total estimator $ \eta^2:=\eta^2_{\rm ST}+\eta^2_{\rm AD}+\eta^2_{\rm CON}$ is considered in the adaptive algorithm.

{
	\footnotesize
	\begin{algorithm}[H]
		\renewcommand{\thealgocf}{}
		\SetAlgoLined
{
		Set the initial triangulation $\mathcal T_0$\;
		Set the maximum number of iteration ${\rm Max}_{\ell}$\;
		\While{$\ell < {\rm Max}_{\ell}$}{
		 {\bf Solve}: Compute the solution  $( \bar{\Psi}_\M, \bar{\Theta}_\M, {\bar{ u}}_h)$ over the triangulation $\mathcal T_\ell$  using Newtons' method and primal dual active set strategy\;
		 {\bf Estimate}: Compute the complete estimator $\eta_\ell^2$ from (\ref{global.reliability})\;
		 {\bf Mark}: Mark a minimal subset $\mathcal M_\ell \subset \mathcal T_\ell$ by D\"{o}rfler marking criteria\;
		 \[ 0.2\sum_{K\in \mathcal T_\ell} \eta_\ell^2(K)  \leq \sum_{K\in \mathcal M_\ell}\eta_\ell^2(K)\]
		 {\bf Refine}:  Compute the closure of $\mathcal M_\ell$ and genrate new triangulation $\mathcal T_{\ell+1}$ using the newest vertex bisection\;
		 		Update the triangulation\;}
	}
\caption{Adaptive Mesh-refinement Algorithm }
\label{algo1}
\end{algorithm}
}
\medskip

\noindent {\bf Convex Domain:} Consider Example \ref{eg1}. This is a test case over the square domain with a smooth exact solution, performed to test the performance of the adaptive estimator for the uniform refinement. Table \ref{table.sqestidomain} depicts the convergence history  of  the estimators (defined in \eqref{def.estimators}) for the uniform refinements for the state, adjoint and control estimators. The combined error and estimator's convergence  are also computed. It is observed  that the individual errors and estimators as well as the combined error have linear order of convergence. Hence, the theoretical rates of convergence are confirmed
by these numerical outputs. 

\begin{table}[ht]
		\footnotesize
		\begin{center}
			\begin{tabular}{|c||c|c||c|c ||c | c|| c|c|}
					\hline
				$h$	& $\eta^2_{\rm ST}$ & Order& $\eta^2_{\rm AD}$  & Order& $\eta^2_{\rm CON}$ & Order& $\eta \quad$ & Order\\
				\hline \hline
   0.2500 &  71.094778 & -- &   1.431732 & -- &  84.529580 & -- & 110.461610 & --\\
    0.1250 &  42.932902 &       0.73 &   0.246138 &       2.54 &  31.588806 &       1.42 &  53.302414 &       1.05 \\
    0.0625 &  25.524713 &       0.75 &   0.114321 &       1.11 &  13.233906 &       1.26 &  28.751701 &       0.89 \\
    0.0312 &  13.597806 &       0.91 &   0.058131 &       0.98 &   6.187437 &       1.10 &  14.939481 &       0.94 \\
    0.0156 &   6.936296 &       0.97 &   0.029364 &       0.99 &   3.031687 &       1.03 &   7.569953 &       0.98 \\
    0.0078 &   3.490596 &       0.99 &   0.014749 &       0.99 &   1.508851 &       1.01 &   3.802777 &       0.99 \\
\hline
\end{tabular}
\end{center}
\vspace{-0.2in}
\caption{\small Estimator and order of convergence for state, adjoint and control variables in Example \ref{eg1}}
\label{table.sqestidomain}
\end{table}
\noindent {\bf Non-convex Domain: } 
This numerical experiment is performed over the non-convex domain (Example \ref{eg2}) with the exact solution has a singularity at the origin. The numerical experiment starts on the initial mesh with 24 triangles, and then adaptive refinements are carried out using  Algorithm \ref{algo1}. 
\begin{figure}[ht!]
	\begin{tabular}{cc}
	\includegraphics[width=8cm,height=6cm]{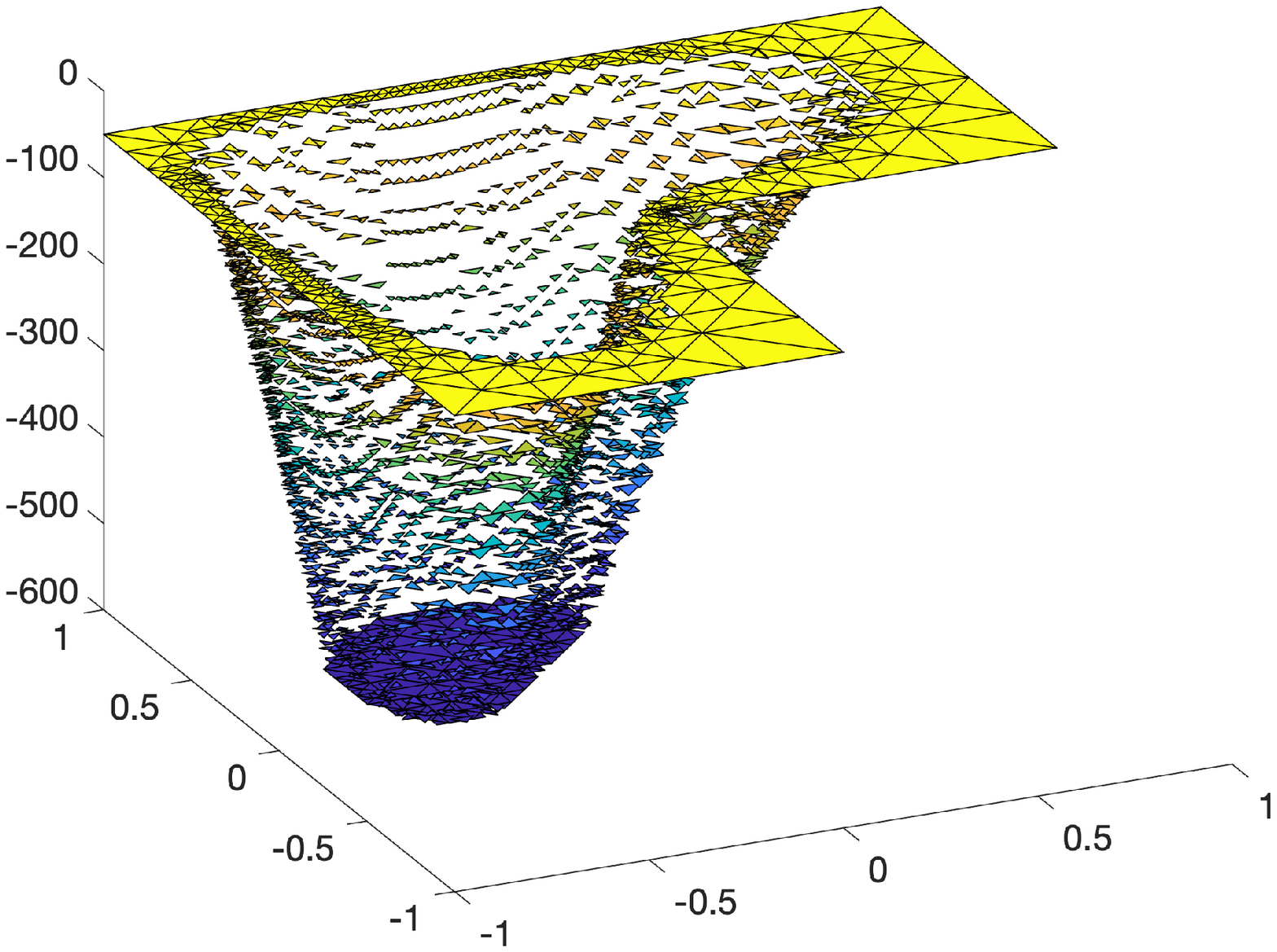}&
\includegraphics[width=7cm,height=6cm]{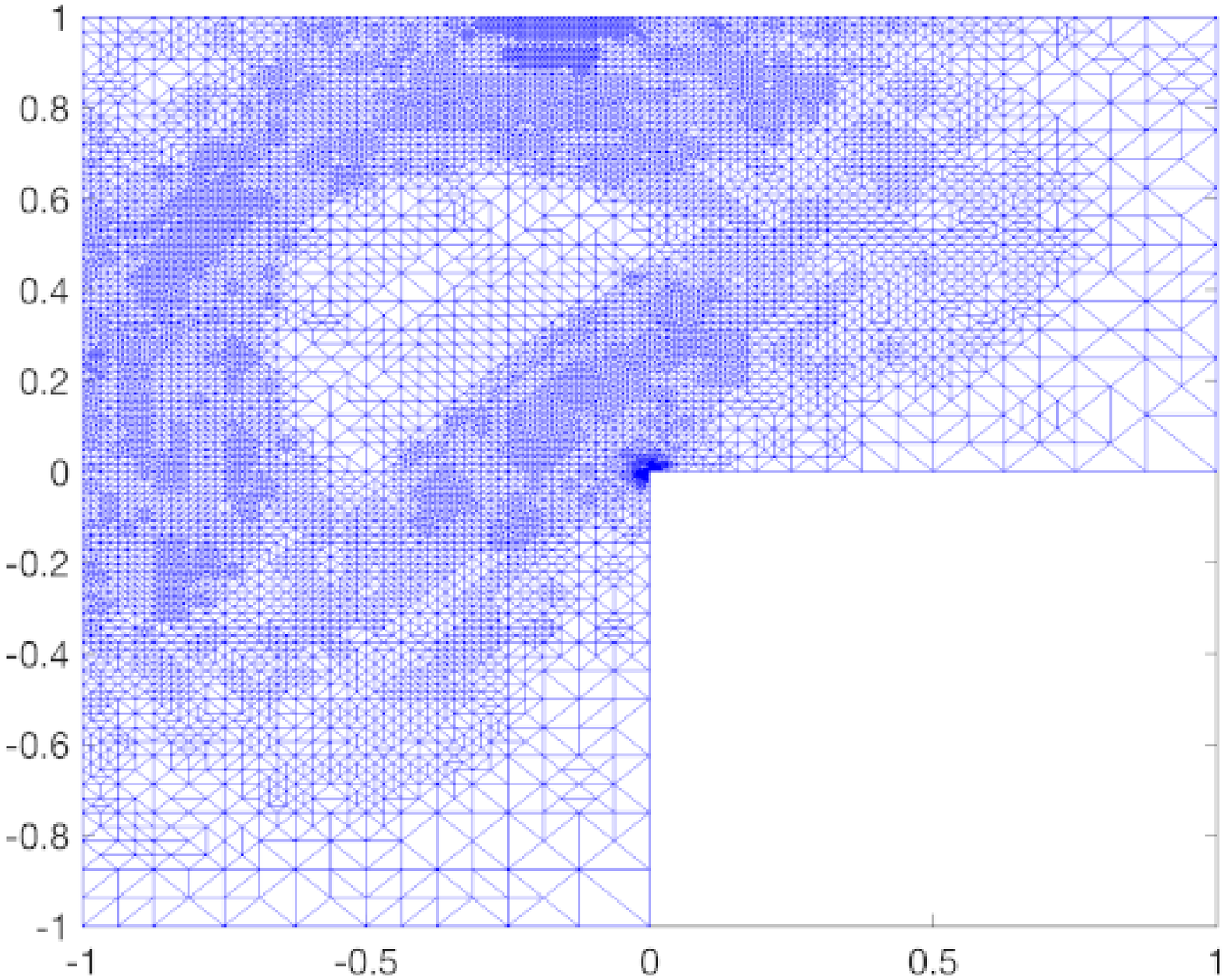}\\
(a) Discrete control $\bar u_h$   &   (b) Adaptive mesh-refinement\\
	\end{tabular}
	\vspace{-0.1in}
	       \caption{\small Discrete control solution $\bar u_h$ (left) and the adaptive mesh-refinement (right) (at level $\ell=24$) in Example \ref{eg2}}
\label{fig2}	
\end{figure}
\noindent   Figure \ref{fig2} shows that the significant adaptive refinement occurs near the control variable interface and the singularity point of the L-shaped domain.  
This is somewhat expected as the state and adjoint solutions have a singularity at the origin, and from Figure \ref{fig3}  it is observed that the control estimator dominates other estimators.
This supports the efficiency of the adaptive estimator in the theoretical estimates obtained in the previous section.  Figure  \ref{fig3}  and  Table \ref{table.Lshapeddomain2} also indicate that the errors and estimators have optimal convergence in the adaptive refinement.
\begin{figure}[ht!]
	\centering
	\includegraphics[width=4.9cm,height=5cm]{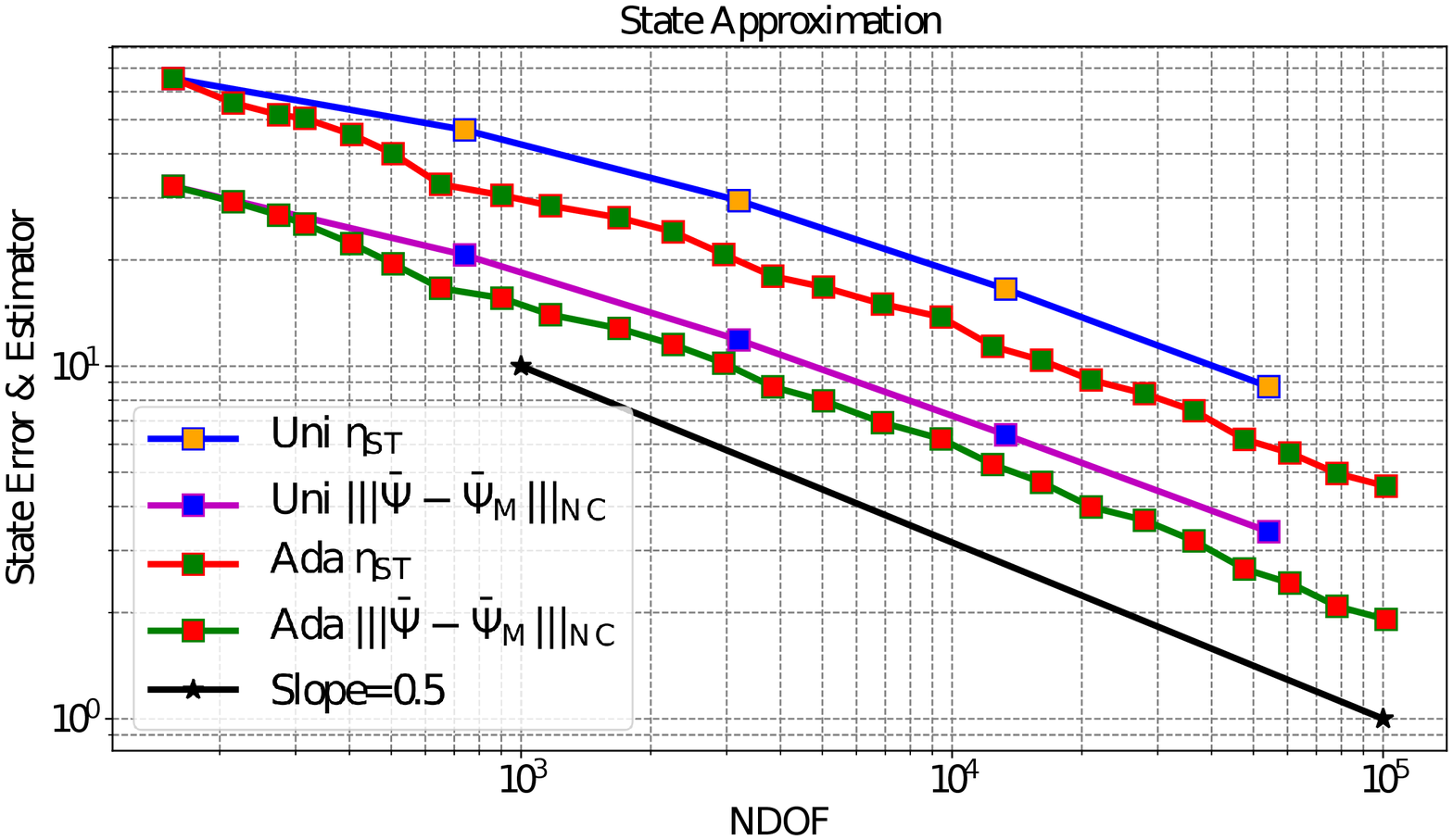}
		\includegraphics[width=4.9cm,height=5cm]{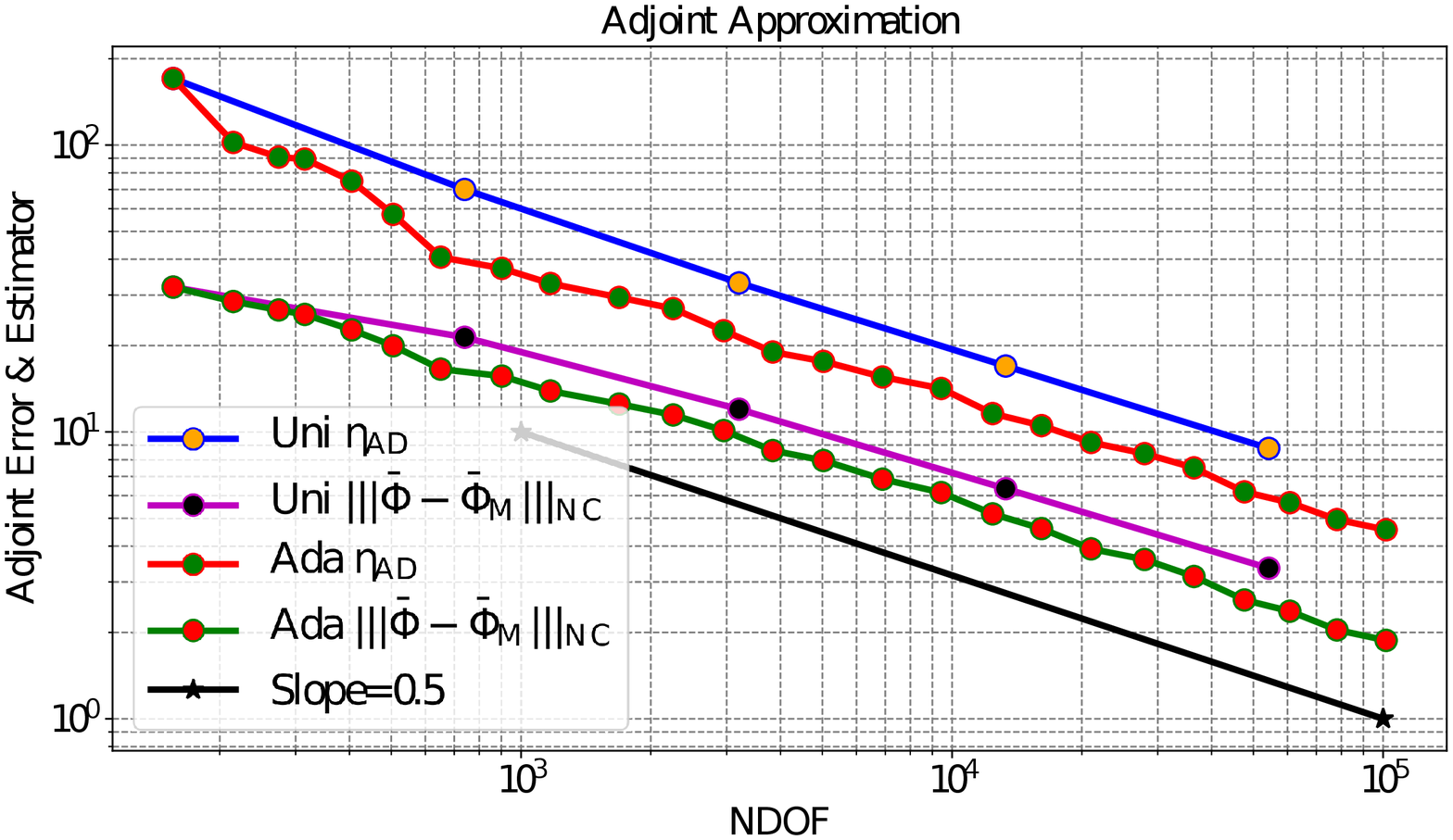}
			\includegraphics[width=4.9cm,height=5cm]{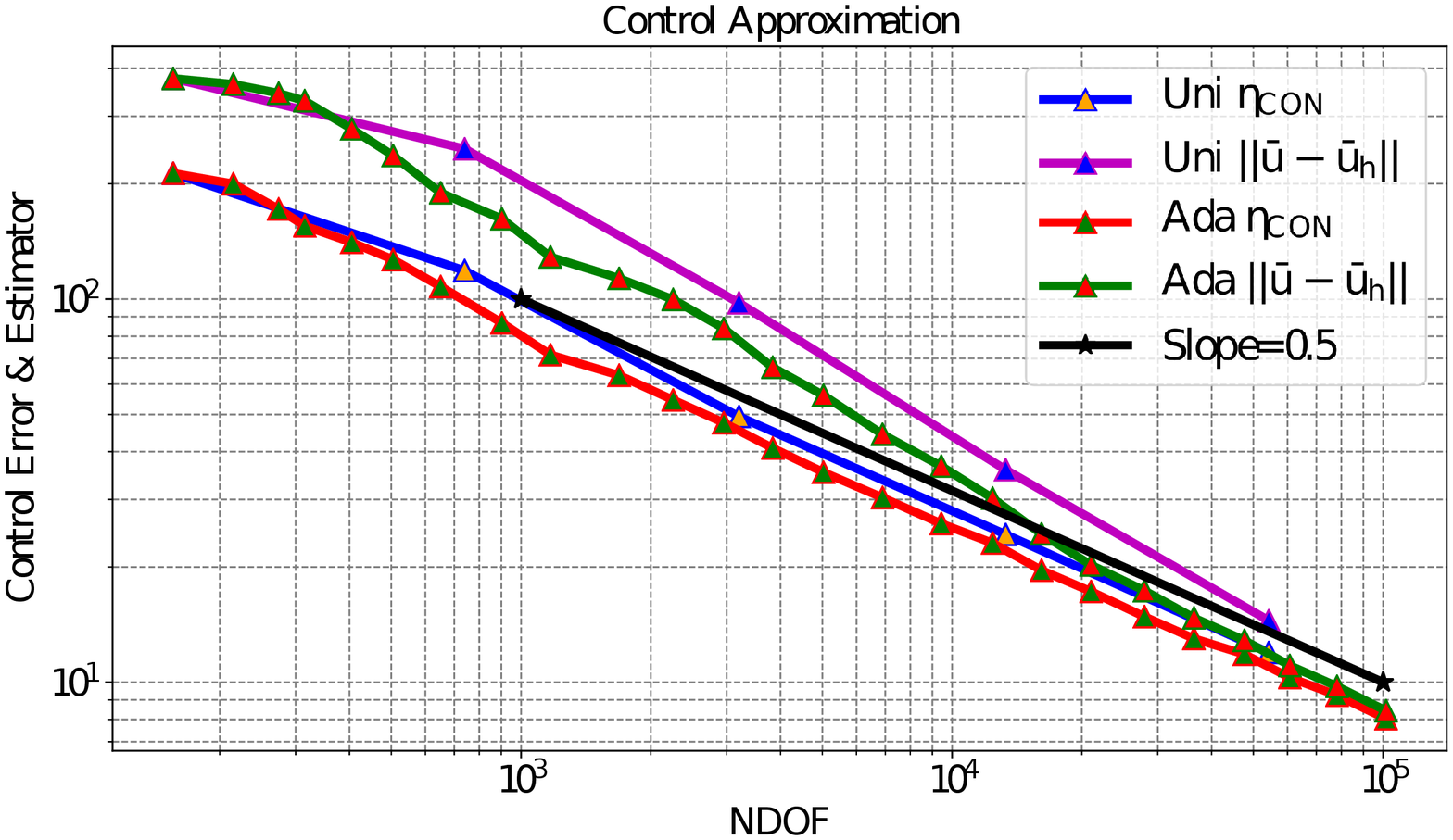}
		\vspace{-0.2in}
	\caption{\small Convergence plot of the approximation errors and estimators with adaptive and uniform refinement for state, adjoint and control variables in Example \ref{eg2} (State approximation (left), Adjoint approximation (Middle),  Control approximation (right), Uni=Uniform refinement, Ada=Adaptive refinement).}
	\label{fig3}
\end{figure}
\begin{table}[ht!]
	\footnotesize
	\begin{center}
	\begin{tabular}{|c|c||c|c||c|c ||c | c|| c|c|c|}
			\hline
		Iteration&	NDOF	& $\frac{\trinl\bPsi-\bPsi_{\M}\trinr_{\NC}}{\trinl\bPsi \trinr_{\NC}}	$ & Order& $\frac{\tnr{\bTheta-\bTheta_{\rm{M}}}_{\rm NC}}{\tnr{\bTheta}_{\rm NC}}$& Order& $\frac{\|\bar{u} - {\bar{u}}_h\|}{\|\bar{u}\|}$& Order& Total Error & Order&Ratio=T.Er/Et\\
			\hline 				\hline 

0 &        156 &   1.371575 & -- &   1.355646 & -- &   0.760376 & -- &   0.812881 & -- &   1.570478 \\
4 &        405 &   0.943121 &       0.51 &   0.961573 &       0.49 &   0.561676 &       0.67 &   0.595680 &       0.65 &   2.043008 \\
8 &       1170 &   0.592890 &       0.42 &   0.588820 &       0.46 &   0.260259 &       0.88 &   0.289033 &       0.81 &   1.949036 \\
12 &       3837 &   0.370235 &       0.59 &   0.365010 &       0.61 &   0.133979 &       0.90 &   0.154315 &       0.84 &   1.840881 \\
16 &      12417 &   0.222890 &       0.61 &   0.219544 &       0.62 &   0.061046 &       0.70 &   0.074987 &       0.68 &   1.502512 \\
20 &      36405 &   0.135407 &       0.51 &   0.132838 &       0.52 &   0.029756 &       0.62 &   0.038840 &       0.59 &   1.295190 \\
23 &      78146 &   0.088349 &       0.60 &   0.086474 &       0.60 &   0.019719 &       0.50 &   0.025611 &       0.53 &   1.215638 \\
\hline
		\end{tabular}
	\end{center}
	\vspace{-0.25in}
	\caption{\small Errors and orders of convergence  for state, adjoint and control variables  with adaptive refinement in Example \ref{eg2}}
	\label{table.Lshapeddomain2}
\end{table}
\begin{figure}[ht!]
	\begin{tabular}{cc}
	\includegraphics[width=8.5cm,height=6cm]{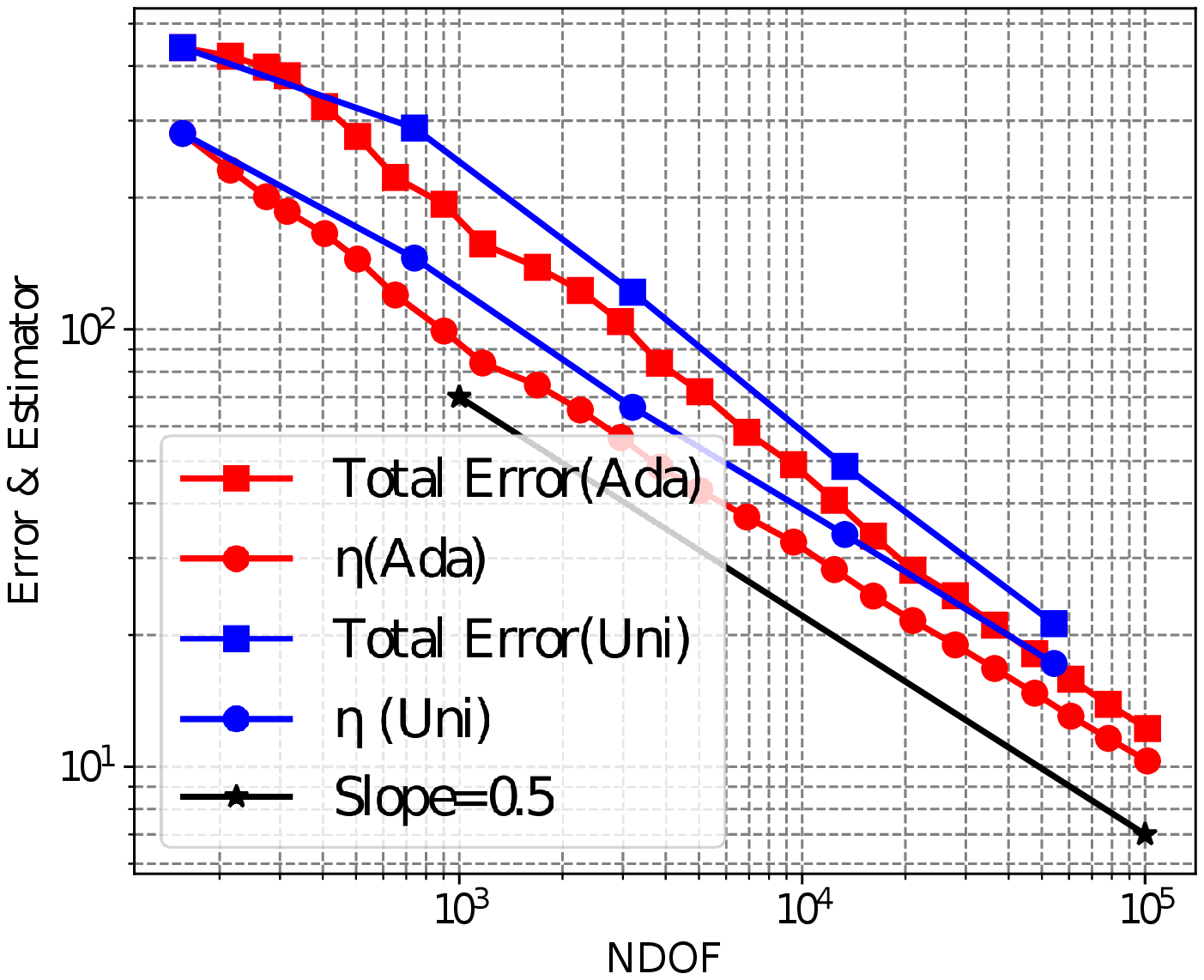} &
\includegraphics[width=6.5cm,height=6cm]{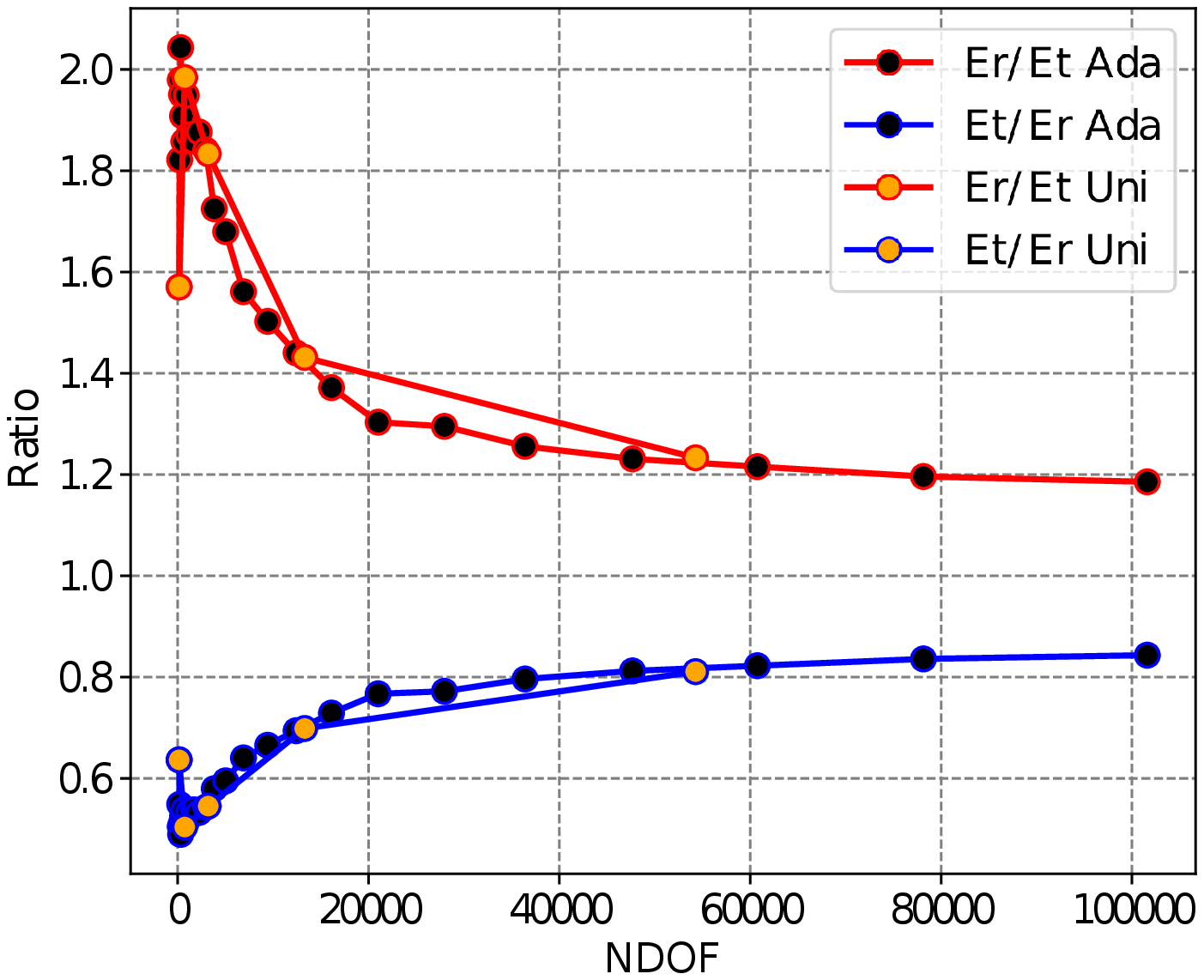}\\
(a) Total error and estimator  & (b) Efficiency and Reliability 
	\end{tabular}
	\caption{\small Convergence plot (left), and reliability and efficiency constants (right) over uniform and adaptive refinements (right) in Example \ref{eg2} ( Er=Total Error, Et=Complete Estimator, Uni=Uniform refinement, Ada=Adaptive refinement).}
	\label{fig4}
\end{figure}
\medskip

\noindent Figure \ref{fig4}.$(a)$ displays the convergence history of the total error and estimator; both achieve optimal convergence in adaptive refinement. Further, it can be observed that the adaptive refinements are doing better in terms of accuracy compared to the uniform refinements.  Figure \ref{fig4}.$(b)$ illustrates that reliability and efficiency constants are approaching a constant value with mesh refinement, which is numerical evidence for the efficiency and reliability of a~posteriori estimator derived in the theory section.
\vspace{-0.1in}
\subsection*{Acknowledgements}
Asha K. Dond would like to acknowledge support from Science \& Engineering Research Board (SERB), Government of  India under Start-up Research Grant, Project No. SRG/2020/001027. Neela Nataraj gratefully acknowledges SERB MATRICS grant MTR/2017/000199
titled {\it Finite element methods for nonlinear plate bending problems and optimal control problems governed
by nonlinear plates} and SERB POWER Fellowship
SPF/2020/000019. Devika Shylaja thanks National Board for Higher Mathematics, India for the financial support towards the research work (No: 0204/3/2020/R$\&$D-II/2476).
\bibliographystyle{plain}
\bibliography{vKeBib}

\begin{thebibliography}{10}

\bibitem{aposterioriNScontrol_2021}
A.~Allendes, F.~Fuica, E.~Otarola, and D.~Quero.
\newblock A posteriori error estimates for a distributed optimal control
  problem of the stationary {N}avier-{S}tokes equations.
\newblock {\em SIAM J. Control Optim. (Accepted for publication), arXiv:
  2004.03086}, 2021.

\bibitem{Berger}
M.~S. Berger.
\newblock On von {K\'{a}rm\'{a}n} equations and the buckling of a thin elastic
  plate, {I} the clamped plate.
\newblock {\em Comm. Pure Appl. Math.}, 20:687--719, 1967.

\bibitem{Fife}
M.~S. Berger and P.~C. Fife.
\newblock On von {K\'{a}rm\'{a}n} equations and the buckling of a thin elastic
  plate.
\newblock {\em Bull. Amer. Math. Soc.}, 72(6):1006--1011, 1966.

\bibitem{BergerFife}
M.~S. Berger and P.~C. Fife.
\newblock Von {K\'{a}rm\'{a}n} equations and the buckling of a thin elastic
  plate. {II} plate with general edge conditions.
\newblock {\em Comm. Pure Appl. Math.}, 21:227--241, 1968.

\bibitem{BlumRannacher}
H.~Blum and R.~Rannacher.
\newblock On the boundary value problem of the biharmonic operator on domains
  with angular corners.
\newblock {\em Math. Methods Appl. Sci.}, 2(4):556--581, 1980.

\bibitem{brennernew}
S.~C. Brenner, M.~Neilan, A.~Reiser, and L.-Y. Sung.
\newblock A {$C^0$} interior penalty method for a von {K}\'{a}rm\'{a}n plate.
\newblock {\em Numer. Math.}, 135(3):803--832, 2017.

\bibitem{BS}
S.~C. Brenner and L.-Y. Sung.
\newblock {$C^0$} interior penalty methods for fourth order elliptic boundary
  value problems on polygonal domains.
\newblock {\em J. Sci. Comput.}, 22/23:83--118, 2005.

\bibitem{ScbSungZhang}
S.~C. Brenner, L.-Y. Sung, H.~Zhang, and Y.~Zhang.
\newblock A {Morley} finite element method for the displacement obstacle
  problem of clamped {Kirchhoff} plates.
\newblock {\em J. Comput. Appl. Math.}, 254:31–42, 2013.

\bibitem{Brezzi}
F.~Brezzi.
\newblock Finite element approximations of the von {K\'{a}rm\'{a}n} equations.
\newblock {\em RAIRO Anal. Num\'{e}r.}, 12(4):303--312, 1978.

\bibitem{CCDGJH14}
C.~Carstensen, D.~Gallistl, and J.~Hu.
\newblock A discrete {H}elmholtz decomposition with {M}orley finite element
  functions and the optimality of adaptive finite element schemes.
\newblock {\em Comput. Math. Appl.}, 68(12, part B):2167--2181, 2014.

\bibitem{CCGMNN18}
C.~Carstensen, G.~Mallik, and N.~Nataraj.
\newblock A priori and a posteriori error control of discontinuous {G}alerkin
  finite element methods for the von {K}\'{a}rm\'{a}n equations.
\newblock {\em IMA J. Numer. Anal.}, 39(1):167--200, 2019.

\bibitem{carstensen2017nonconforming}
C.~Carstensen, G.~Mallik, and N.~Nataraj.
\newblock Nonconforming finite element discretization for semilinear problems
  with trilinear nonlinearity.
\newblock {\em IMA J. Numer. Anal.}, 41(1):164--205, 2021.

\bibitem{CN2020}
C.~Carstensen and N.~Nataraj.
\newblock Adaptive {M}orley {FEM} for the von {K}\'arm\'an equations with
  optimal convergence rates.
\newblock {\em SIAM J. Numer. Anal.}, 59(2):696--719, 2021.

\bibitem{CCNN2021}
C.~Carstensen and N.~Nataraj.
\newblock A priori and a posteriori error analysis of the
  {C}rouzeix–{R}aviart and {M}orley {FEM} with original and modified
  right-hand sides.
\newblock {\em Comput. Methods Appl. Math.}, 21(2):289--315, 2021.

\bibitem{CCP}
C.~Carstensen and S.~Puttkammer.
\newblock How to prove the discrete reliability for nonconforming finite
  element methods.
\newblock {\em J. Comput. Math}, 38(1):142--175, 2020.

\bibitem{cmj}
E.~Casas, M.~Mateos, and J.~P. Raymond.
\newblock Error estimates for the numerical approximation of a distributed
  control problem for the steady-state {N}avier-{S}tokes equations.
\newblock {\em SIAM J. Control Optim.}, 46(3):952--982 (electronic), 2007.

\bibitem{Chen2020AMF}
H.~Chen, A.~K. Pani, and W.~Qiu.
\newblock A mixed finite element scheme for biharmonic equation with variable
  coefficient and von {K}\'arm\'an equations.
\newblock {\em arXiv:2005.11734}, 2020.

\bibitem{SCTGAKN2015}
S.~Chowdhury, T.~Gudi, and A.~K. Nandakumaran.
\newblock A framework for the error analysis of discontinuous finite element
  methods for elliptic optimal control problems and applications to {$C^0$}
  {IP} methods.
\newblock {\em Numer. Funct. Anal. Optim.}, 36(11):1388--1419, 2015.

\bibitem{SCNNDS}
S.~Chowdhury, N.~Nataraj, and D.~Shylaja.
\newblock Morley {FEM} for a distributed optimal control problem governed by
  the von {K}\'{a}rm\'{a}n equations.
\newblock {\em Comput. Methods Appl. Math.}, 21(1):233--262, 2021.

\bibitem{Ciarlet}
P.~G. Ciarlet.
\newblock {\em The Finite Element Method for Elliptic Problems}.
\newblock North-Holland, Amsterdam, 1978.

\bibitem{CiarletPlates}
P.~G. Ciarlet.
\newblock {\em Mathematical Elasticity: Theory of Plates}, volume~II.
\newblock North-Holland, Amsterdam, 1997.

\bibitem{ErnJLU_2004}
A.~Ern and J.-L. Guermond.
\newblock {\em Theory and practice of finite elements}, volume 159 of {\em
  Applied Mathematical Sciences}.
\newblock Springer-Verlag, New York, 2004.

\bibitem{DG_Morley_Eigen}
D.~Gallistl.
\newblock Morley finite element method for the eigenvalues of the biharmonic
  operator.
\newblock {\em IMA J. Numer. Anal.}, pages 1--33, 2014.

\bibitem{Grisvard}
P.~Grisvard.
\newblock {\em Singularities in boundary value problems}, volume RMA 22.
\newblock Masson \& Springer-Verlag, 1992.

\bibitem{hou}
L.~Hou and J.~C. Turner.
\newblock Finite element approximation of optimal control problems for the von
  {K}\'arm\'an equations.
\newblock {\em Numer. Methods Partial Differential Equations}, 11(1):111--125,
  1995.

\bibitem{HuShi}
J.~Hu and Z.~C. Shi.
\newblock The best {$L^2$} norm error estimate of lower order finite element
  methods for the fourth order problem.
\newblock {\em J. Comput. Math.}, 30(5):449--460, 2012.

\bibitem{Knightly}
G.~H. Knightly.
\newblock An existence theorem for the von {K\'{a}rm\'{a}n} equations.
\newblock {\em Arch. Ration. Mech. Anal.}, 27(3):233--242, 1967.

\bibitem{Lions1}
J.~L. Lions.
\newblock {\em Optimal Control of Systems governed by partial differential
  equations}.
\newblock Springer, Berlin, 1971.

\bibitem{ng1}
G.~Mallik and N.~Nataraj.
\newblock Conforming finite element methods for the von {K\'{a}rm\'{a}n}
  equations.
\newblock {\em Adv. Comput. Math.}, 42(5):1031--1054, 2016.

\bibitem{ng2}
G.~Mallik and N.~Nataraj.
\newblock A nonconforming finite element approximation for the von
  {K\'{a}rm\'{a}n} equations.
\newblock {\em ESAIM Math. Model. Numer. Anal.}, 50(2):433--454, 2016.

\bibitem{ngr}
G.~Mallik, N.~Nataraj, and J.P. Raymond.
\newblock Error estimates for the numerical approximation of a distributed
  optimal control problem governed by the von {K}\'{a}rm\'{a}n equations.
\newblock {\em ESAIM Math. Model. Numer. Anal.}, 52:1137--1172, 2018.

\bibitem{Miyoshi}
T.~Miyoshi.
\newblock A mixed finite element method for the solution of the von
  {K\'{a}rm\'{a}n} equations.
\newblock {\em Numer. Math.}, 26(3):255--269, 1976.

\bibitem{fredi2010}
F.~Tr{\"o}ltzsch.
\newblock {\em Optimal control of partial differential equations}, volume 112
  of {\em Graduate Studies in Mathematics}.
\newblock American Mathematical Society, Providence, RI, 2010.
\newblock Theory, methods and applications, Translated from the 2005 German
  original by J{\"u}rgen Sprekels.

\bibitem{verfurth}
R.~Verf\"{u}rth.
\newblock {\em A posteriori error estimation techniques for finite element
  methods}.
\newblock Numerical Mathematics and Scientific Computation. Oxford University
  Press, Oxford, 2013.

\end{thebibliography}

\appendix
\renewcommand{\thesection}{A\Alph{section}}
\renewcommand{\thesubsection}{A\Alph{section}.\arabic{subsection}}
\section*{Appendix}\label{sec.appendix}
\renewcommand\thesection{A.\arabic{section}}
\renewcommand{\theequation}{A.\arabic{equation}}
\pagestyle{empty}
\begin{appendices}

\setcounter{lemma}{0}
    \renewcommand{\thelemma}{\Alph{section}A.\arabic{lemma}}     
The detailed proofs of the {\it a priori} error estimates that are different from \cite{SCNNDS} are presented in this section. 

\subsection{A priori estimates}

\medskip

\noindent {\bf A linear mapping}

\medskip
\noindent For a given ${\bf g}=(g_1,g_2) \in {\bf V}'$,  let the {\it linear operator} $T \in {\mathcal L}(\bV', \bV)$  defined by 
$T {\bf  g}:= \boldsymbol{\xi} =(\xi_1, \xi_2) \in \bV$ solves the biharmonic system  $A(\boldsymbol{\xi}, \Phi) = \big\langle {\bf g}, \Phi \big\rangle \mbox{ for all } \Phi \in \bV$, that is,  
\begin{align} \label{bih}
	\Delta^2  \xi_1 = g_1 \,\, \mbox{ in } \Omega, \;  \Delta^2  \xi_2 = g_2  \mbox{ in } \Omega,   \; \xi_1=0,\,\frac{\partial \xi_1}{\partial \nu}=0\text{ and } \xi_2=0,\,\frac{\partial  \xi_2}{\partial \nu} =0
	\text{  on }\partial\Omega. 
\end{align}
Moreover, for ${\bf{g}}\in \bH^{-1}(\Omega)$, $\boldsymbol{\xi}\in \bV\cap\bH^{2+\gamma}(\Omega), \; \gamma \in (1/2,1]$, the  elliptic regularity \cite{BS} result stated next holds. 
\begin{align}\label{reguesti}\trinl\boldsymbol{\xi}\trinr_{{2}}\lesssim
	\trinl {\bf{g}}\trinr_{-1}, \quad \trinl\boldsymbol{\xi}\trinr_{{2+\gamma}}\lesssim
	\trinl {\bf{g}}\trinr_{-1}.
\end{align}
For  ${\bf  g} \in \bV_\M'$, define the bounded discrete operator $T_\NC: \bV_\M' \rightarrow\bV_{\M}$ by 
$T_\NC {\bf  g}:= \boldsymbol{\xi}_\M $ where $\boldsymbol{\xi}_\M \in \bV_\M$ solves the discrete problem
\begin{equation} \label{dis_bih}
	{A}_{\NC}(\boldsymbol{\xi}_\M, \Phi_\M) = \big \langle {\bf g}, \Phi_\M\big \rangle \;  \mbox{ for all } \Phi_\M \in \bV_\M.
\end{equation}
The  lemma stated next is utilized to prove the existence and uniqueness of the solution to \eqref{wv1}.
\begin{lemma}[\it An intermediate estimate] \label{aux}
	Let $\bar \Psi \in \bV \cap {\bH}^{2+\gamma}(\Omega)$ be a regular solution to \eqref{wform}. Then $\forall \epsilon >0 $, $\exists h_{1}>0$ such that 
	$\| T[\mathcal{B}'_{\rm NC}(\bar \Psi)] - T_{\rm NC} [\mathcal{B}_{\rm NC}'(\Psi)]\|_{{\mathcal L}(\bV+\bV_{\rm M})} < \epsilon  \fl \Psi \in B_{\rho_\epsilon}(\bar \Psi),$
	whenever $0 < h < h_1$.
\end{lemma}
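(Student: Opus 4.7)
The plan is to apply the triangle inequality on
\[
T\mathcal{B}'_{\rm NC}(\bar\Psi) - T_{\rm NC}\mathcal{B}'_{\rm NC}(\Psi) = \underbrace{(T - T_{\rm NC})\mathcal{B}'_{\rm NC}(\bar\Psi)}_{=: E_1} \;+\; \underbrace{T_{\rm NC}\mathcal{B}'_{\rm NC}(\bar\Psi - \Psi)}_{=: E_2},
\]
where the splitting uses that $\mathcal{B}'_{\rm NC}$ is linear in its argument (a consequence of the trilinearity of $B_{\rm NC}$ together with the formula in \eqref{B.derivative}). The two summands are estimated separately: the first becomes small by sending $h \to 0$ (discretization), the second by shrinking $\rho_\epsilon$ (perturbation).

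For $E_2$, I would pick an arbitrary $\Phi \in \bV + \bV_{\rm M}$ with $\trinl\Phi\trinr_{\rm NC} = 1$ and test \eqref{dis_bih} by $w_{\rm M} := T_{\rm NC}[\mathcal{B}'_{\rm NC}(\bar\Psi - \Psi)(\Phi)] \in \bV_{\rm M}$ itself. Using the coercivity in Lemma \ref{Anc.bound}.$(c)$ and the boundedness of $B_{\rm NC}$ in Lemma \ref{Bnc bound}.$(b)$, one obtains
\[
\trinl w_{\rm M}\trinr_{\rm NC}^2 = 2B_{\rm NC}(\bar\Psi - \Psi, \Phi, w_{\rm M}) \lesssim \trinl \bar\Psi - \Psi\trinr_{\rm NC} \trinl\Phi\trinr_{\rm NC}\trinl w_{\rm M}\trinr_{\rm NC},
\]
hence $\|E_2\|_{\mathcal{L}(\bV + \bV_{\rm M})} \lesssim \trinl\bar\Psi - \Psi\trinr_{\rm NC}$, which can be made smaller than $\epsilon/2$ by choosing $\rho_\epsilon$ small.

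For $E_1$, fix $\Phi \in \bV + \bV_{\rm M}$ with $\trinl\Phi\trinr_{\rm NC} = 1$ and set $w := T[\mathcal{B}'_{\rm NC}(\bar\Psi)(\Phi)] \in \bV$, $w_{\rm M} := T_{\rm NC}[\mathcal{B}'_{\rm NC}(\bar\Psi)(\Phi)] \in \bV_{\rm M}$. The key preliminary observation is that since $\bar\Psi \in \bH^{2+\gamma}(\Omega)$, Lemma \ref{Bnc bound}.$(d)$ gives
\[
\bigl|2B(\bar\Psi, \Phi, \phi)\bigr| \lesssim \trinl\bar\Psi\trinr_{2+\gamma}\trinl\Phi\trinr_{\rm NC}\trinl\phi\trinr_{1} \qquad \forall\,\phi \in \bH^1_0(\Omega),
\]
so that the functional $\mathcal{B}'_{\rm NC}(\bar\Psi)(\Phi)$ lies in $\bH^{-1}(\Omega)$ with norm $\lesssim \trinl\Phi\trinr_{\rm NC}$. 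By elliptic regularity \eqref{reguesti}, $w \in \bV \cap \bH^{2+\gamma}(\Omega)$ with $\trinl w\trinr_{2+\gamma} \lesssim \trinl\Phi\trinr_{\rm NC}$. I would then prove the Morley-type estimate $\trinl w - w_{\rm M}\trinr_{\rm NC} \lesssim h^\gamma \trinl\Phi\trinr_{\rm NC}$ by introducing the Morley interpolant $I_{\rm M} w$, using coercivity of $A_{\rm NC}$, and rewriting the resulting consistency residual through the companion operator $J$ of Lemma \ref{hctenrich}. The orthogonalities $\Pi_0 D^2_{\rm NC}((1-J)v_{\rm M}) = 0$ and $\Pi_0((1-J)v_{\rm M}) = 0$ let one replace Morley test functions by their conforming counterparts $J v_{\rm M} \in \bV$ up to residuals controlled by edge jumps (Lemma \ref{hctenrich}.$(e)$) and by Lemma \ref{Anc.bound}. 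This is precisely where the improvement over \cite[Lemma 3.12]{SCNNDS} arises, since the crude estimate there only exploits the Morley interpolation without the companion operator.

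The hard part will be the nonconformity in the data of the biharmonic problem defining $w_{\rm M}$: the right-hand-side functional $\mathcal{B}'_{\rm NC}(\bar\Psi)(\Phi)$ uses the broken bracket $[\bar\Psi, \Phi]$ when $\Phi \in \bV_{\rm M}$, so the continuous problem for $w$ and the discrete problem for $w_{\rm M}$ are driven by genuinely different functionals. The companion-operator identities are what convert the nonconforming data into a conforming piece (via $J\Phi$) plus a remainder of order $h^\gamma$, giving the sharp consistency bound rather than a suboptimal one. Once $\trinl w - w_{\rm M}\trinr_{\rm NC} \lesssim h^\gamma \trinl\Phi\trinr_{\rm NC}$ is in hand, choosing $0 < h < h_1$ small enough ensures $\|E_1\|_{\mathcal{L}(\bV + \bV_{\rm M})} < \epsilon/2$, and combining with the bound on $E_2$ completes the proof.
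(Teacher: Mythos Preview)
Your proposal is correct and follows essentially the same strategy as the paper: both split off the perturbation term $T_{\rm NC}\mathcal{B}'_{\rm NC}(\bar\Psi-\Psi)$ via linearity and bound it exactly as you do, and both handle the discretization error via elliptic regularity of $w=T[\mathcal{B}'_{\rm NC}(\bar\Psi)(\Phi)]\in\bH^{2+\gamma}(\Omega)$ together with the companion operator $J$ to control the nonconforming residual.

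The only organizational difference is that the paper uses a \emph{three}-term triangle inequality, inserting an auxiliary discrete solution $\boldsymbol{\theta}_\M^J(\bar\Psi)\in\bV_\M$ defined by $A_{\rm NC}(\boldsymbol{\theta}_\M^J(\bar\Psi),\Phi_\M)=\langle\mathcal{B}'_{\rm NC}(\bar\Psi)(\mathbf z),\,J\Phi_\M\rangle$; the piece $\trinl\boldsymbol{\theta}(\bar\Psi)-\boldsymbol{\theta}_\M^J(\bar\Psi)\trinr_{\rm NC}$ is then handled as a black box by the best-approximation result \cite[Theorem~3.2]{CCNN2021}, while $\trinl\boldsymbol{\theta}_\M^J(\bar\Psi)-\boldsymbol{\theta}_\M(\bar\Psi)\trinr_{\rm NC}$ reduces to $2B_{\rm NC}(\bar\Psi,\mathbf z,(J-1)\Phi_\M)$ and is bounded via Lemma~\ref{Bnc bound}.$(a)$, an inverse inequality, and Lemma~\ref{hctenrich}.$(d)$. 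Your direct route through $I_\M w$ and coercivity unpacks exactly this best-approximation argument by hand and arrives at the same consistency residual $A_{\rm NC}(w,(1-J)\rho_\M)+2B_{\rm NC}(\bar\Psi,\mathbf z,(J-1)\rho_\M)$; the two presentations are equivalent. One small clarification: the continuous and discrete problems for $w$ and $w_\M$ are in fact driven by the \emph{same} functional $2B_{\rm NC}(\bar\Psi,\Phi,\bullet)$, only tested on $\bV$ versus $\bV_\M$---the ``nonconformity in the data'' you mention is really the usual nonconformity of the Morley test space, which is precisely what $J$ repairs.
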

\begin{proof}
	%
	%
	\noindent	For ${\bf z} \in \bV+\bV_\M$, \eqref{B.derivative} and Lemma \ref{Bnc bound}.$(b)$ show that $\mathcal{B}_{\NC}'(\bar{\Psi})({\bf z})\in \bV'$ and $\mathcal{B}_{\NC}'(\bar{\Psi})({\bf z})\in \bV_\M'$.  For  $\Psi\in \bV+\bV_{\M}$, the definitions of $T(\bullet)$ and $T_\NC(\bullet)$, and \eqref{reguesti} imply that ${\boldsymbol \theta}(\bar \Psi) =:T[\mathcal{B}'_{\NC}(\bar \Psi) ({\bf z})]\in \bV \cap \boldsymbol{H}^{2+\gamma}(\Omega)$ and ${\boldsymbol \theta}_\M( \Psi)=:T_\NC[\mathcal{B}_{\NC}'( \Psi) ({\bf z})]\in \bV_\M$ solve
	\begin{align}
		A({\boldsymbol \theta}(\bar \Psi), \Phi)&=\big \langle \mathcal{B}'_{\NC}(\bar  \Psi) {(\bf z)}, \Phi \big \rangle_{} \fl \Phi \in \bV, \label{a1} \\
		{A}_{\NC}({\boldsymbol \theta}_\M( \Psi), \Phi_\M)& = \langle \mathcal{B}'_{\NC}( \Psi) {(\bf z)}, \Phi_\M \rangle  \fl \Phi_\M \in \bV_\M. \label{a2}
	\end{align} 
	Let ${\boldsymbol \theta_\M}(\bar \Psi)$ and ${\boldsymbol \theta}^J_\M(\bar \Psi)\in \bV_\M$ solve the discrete problems 
	\begin{align}
		A_\NC({\boldsymbol \theta_\M}(\bar \Psi), \Phi_\M)&=\big \langle \mathcal{B}'_{\NC}(\bar  \Psi) {(\bf z)}, \Phi_\M \big \rangle \fl \Phi_\M \in \bV_\M, \label{a1d} \\
		{A}_{\NC}({\boldsymbol \theta}^J_\M( \bar \Psi), \Phi_\M)& = \langle \mathcal{B}'_{\NC}(\bar \Psi) {(\bf z)}, J\Phi_\M \rangle_{}  \fl \Phi_\M \in \bV_\M.\label{a2d}
	\end{align}   
	\noindent A triangle inequality yields  
	\begin{equation}\label{trian}
		\trinl {\boldsymbol \theta}(\bar \Psi) -{\boldsymbol \theta}_\M( \Psi) \trinr_{\NC} \le  
		\trinl {\boldsymbol \theta}(\bar \Psi) -{\boldsymbol \theta}_\M^J( \bar \Psi) \trinr_{\NC}  +
		\trinl {\boldsymbol \theta}_\M^J(\bar \Psi) -{\boldsymbol \theta}_\M( \bar \Psi) \trinr_{\NC} + 
		\trinl {\boldsymbol \theta}_\M(\bar \Psi) -{\boldsymbol \theta}_\M( \Psi) \trinr_{\NC}.
	\end{equation}
	Notice that ${\boldsymbol \theta}_\M^J( \bar \Psi)$ is the Morley nonconforming solution to \eqref{a1} for a modified right-hand side $\mathcal{B}'_{\NC}(\bar \Psi) {(\bf z)}\circ J \in \bV_\M'.$ 
	The best-approximation result from \cite[Theorem 3.2]{CCNN2021} shows
	\begin{align} \label{bap.biharmonic}
		\trinl  {\boldsymbol \theta}( \bar \Psi)-{\boldsymbol \theta}^J_\M( \bar \Psi) \trinr_{\NC} \le 
		\sqrt{1+\Lambda_\jc^2} \; \trinl  (1-I_\M) {\boldsymbol \theta}( \bar \Psi) \trinr_{\NC}.
	\end{align}
	This plus the interpolation estimate from Lemma \ref{Morley_Interpolation}.$(c)$, \eqref{reguesti}, \eqref{a1} and Lemma~\ref{Bnc bound}.$(d)$ imply
	\begin{align}\label{intermediate1}
		& 	\trinl  {\boldsymbol \theta}( \bar \Psi)-{\boldsymbol \theta}^J_\M( \bar \Psi) \trinr_{\NC} \lesssim 
		h^{\gamma} \sqrt{1+\Lambda_\jc^2}  \: \trinl {\boldsymbol \theta}( \bar \Psi) \trinr_{2+\gamma} \lesssim   h^{\gamma}  \trinl \mathcal{B}'_{\NC}(\bar  \Psi) {(\bf z)}\trinr_{-1} 
		\lesssim  h^{\gamma} \trinl \bPsi\trinr_{2+\gamma}\trinl {\bf z}\trinr_\NC.
	\end{align}
	The combination of \eqref{a1d} and \eqref{a2d}, and \eqref{B.derivative} show
	$$A_{\NC} ({\boldsymbol \theta_\M}(\bar \Psi)- {\boldsymbol \theta}^J_\M( {\bar \Psi}), \Phi_\M )=
	2B_\NC({\bar \Psi}, {\bf z},  (1 - J)  \Phi_\M ). $$
	The inverse inequality and Lemma \ref{hctenrich}.$(d)$ prove $\trinl (1 - J)  \Phi_\M \trinr_{0,\infty}\lesssim h^{-1}\trinl (1 - J)  \Phi_\M \trinr\lesssim h\trinl \Phi_\M \trinr_\NC$. This, Lemma \ref{Anc.bound}.$(c)$ with test function $\Phi_\M:= {\boldsymbol \theta_\M}(\bar \Psi)- {\boldsymbol \theta}^J_\M( {\bar \Psi})$ and Lemma \ref{Bnc bound}.$(a)$ imply
	\begin{align} \label{intermediate2}
		\trinl {\boldsymbol \theta}_{\M}^J(\bar \Psi) -{\boldsymbol \theta}_\M( \bar \Psi) \trinr_{\NC}\lesssim h \trinl {\bar \Psi} \trinr_{2} \trinl {\bf z}\trinr_{\NC}.
	\end{align}
	The combination of \eqref{a2} and \eqref{a1d}, and \eqref{B.derivative}, Lemma \ref{Anc.bound}.$(c)$ with test function ${\boldsymbol \theta_\M}(\Psi)- {\boldsymbol \theta}_\M( {\bar \Psi})$, and Lemma \ref{Bnc bound}.$(b)$ prove 
	$	\trinl {\boldsymbol \theta}_\M(\Psi) - {\boldsymbol \theta}_\M(\bar \Psi) \trinr_{\NC}\lesssim  \trinl {\bf z}\trinr_{\NC} \trinl \Psi-{\bar \Psi} \trinr_{\NC}.$ 
	A substitution of this and \eqref{intermediate1}-\eqref{intermediate2} in \eqref{trian} leads to the result that for any preassigned $\epsilon>0$, $h_{1}$ and the radius $\rho_{\epsilon}>0$ can be chosen small such that for all $\Psi \in B_{\rho_\epsilon}(\bar \Psi)$, 
	$\trinl T[\mathcal{B}'_{\NC}(\bar \Psi) ({\bf z})]-T_\NC[\mathcal{B}_{\NC}'( \Psi) ({\bf z})]\trinr_{\NC}<\epsilon\trinl\bf z\trinr_{\NC},$
	that leads to the desired estimate.
\end{proof}

\noindent The next lemma is a standard result in Banach spaces that helps to prove Lemma \ref{automorphism}.

\begin{lemma} \label{banach}
	Let $X$ be a Banach space,
	$A\in{\mathcal{L}}(X)$ be  invertible  and $B\in{\mathcal{L}}(X)$. If
	$\|A-B\|_{{\mathcal{L}}(X)}<1/\|A^{-1}\|_{{\mathcal{L}}(X)}$, then
	$B$ is invertible. If
	$\|A-B\|_{{\mathcal{L}}(X)}<1/(2\|A^{-1}\|_{{\mathcal{L}}(X)})$, then
	$\|B^{-1}\|_{{\mathcal{L}}(X)}\leq 2\|A^{-1}\|_{{\mathcal{L}}(X)}$. 
\end{lemma}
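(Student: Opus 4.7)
The plan is to use the classical Neumann series argument. First I would factor $B$ as $B = A - (A-B) = A\bigl(I - A^{-1}(A-B)\bigr)$, which reduces the invertibility of $B$ to that of $I - A^{-1}(A-B)$. The sub-multiplicativity of the operator norm gives
\begin{equation*}
\|A^{-1}(A-B)\|_{\mathcal{L}(X)} \le \|A^{-1}\|_{\mathcal{L}(X)} \|A-B\|_{\mathcal{L}(X)} < 1
\end{equation*}
under the first hypothesis, so $I - A^{-1}(A-B)$ is invertible by the Neumann series
\begin{equation*}
\bigl(I - A^{-1}(A-B)\bigr)^{-1} = \sum_{n=0}^{\infty}\bigl(A^{-1}(A-B)\bigr)^n,
\end{equation*}
which converges absolutely in $\mathcal{L}(X)$ since $X$ is complete. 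Therefore $B = A\bigl(I - A^{-1}(A-B)\bigr)$ is the composition of two invertibles and hence invertible, with $B^{-1} = \bigl(I - A^{-1}(A-B)\bigr)^{-1}A^{-1}$.

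For the quantitative bound, I would apply the geometric series estimate
\begin{equation*}
\bigl\|\bigl(I - A^{-1}(A-B)\bigr)^{-1}\bigr\|_{\mathcal{L}(X)} \le \frac{1}{1 - \|A^{-1}\|_{\mathcal{L}(X)}\|A-B\|_{\mathcal{L}(X)}},
\end{equation*}
and then multiply by $\|A^{-1}\|_{\mathcal{L}(X)}$ to get
\begin{equation*}
\|B^{-1}\|_{\mathcal{L}(X)} \le \frac{\|A^{-1}\|_{\mathcal{L}(X)}}{1 - \|A^{-1}\|_{\mathcal{L}(X)}\|A-B\|_{\mathcal{L}(X)}}.
\end{equation*}
The second hypothesis $\|A-B\|_{\mathcal{L}(X)} < 1/(2\|A^{-1}\|_{\mathcal{L}(X)})$ yields $\|A^{-1}\|_{\mathcal{L}(X)}\|A-B\|_{\mathcal{L}(X)} < 1/2$, hence the denominator exceeds $1/2$ and $\|B^{-1}\|_{\mathcal{L}(X)} \le 2\|A^{-1}\|_{\mathcal{L}(X)}$.

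This argument is entirely standard and offers no real obstacle; the only care needed is to present the Neumann series cleanly and to track the constant through the geometric bound so that the factor $2$ appears correctly in the second conclusion.
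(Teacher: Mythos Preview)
Your proof is correct and is the standard Neumann series argument. The paper does not actually prove this lemma; it simply states it as ``a standard result in Banach spaces'' and moves on, so your argument supplies exactly what the paper omits.
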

\noindent The uniform boundedness result for the inverse of the linear mapping $\mathcal{F}_{\Psi_u}$  with a bound independent of the discretization parameter $h$ {\it without assuming the extra regularity of $\bPsi$} is proved next. This result was used to derive the {\it a posteriori} error estimates for the adjoint variable.
\begin{proof}[Proof of Lemma \ref{automorphism}]
\cite[Lemma 4.3]{SCNNDS} shows that $\mathcal{F}_{\bPsi}$   is an automorphism on $\bV+\bV_{\rm{M}}$ if  $\bar{\Psi} \in \bV $ is a regular solution to \eqref{wform}. Also, for ${\boldsymbol \xi}+{\boldsymbol \xi}_\M\in \bV+\bV_\M$, the invertibility of $\mathcal{F}_{\bPsi}$ leads to
$	\mathcal{F}_{\bar\Psi}^{-1}({\boldsymbol \xi}+{\boldsymbol \xi}_\M)={\boldsymbol \xi}+{\boldsymbol \xi}_\M-(\mathcal{A}+\mathcal{B}'(\bar{\Psi})^*)^{-1}\mathcal{B}_{\NC}'(\bar{\Psi})^*({\boldsymbol \xi}+{\boldsymbol \xi}_\M).$ This,  and Lemma \ref{Bnc bound}.$(b)$ imply $\|\mathcal{F}_{\bPsi}^{-1}\|_{\mathcal{L}(\bV+\bV_{\rm M})}\lesssim  1+\|(\mathcal{A}+\mathcal{B}
'(\bar{\Psi})^*)^{-1}\|_{\mathcal{L}(\bV',\bV)}\trinl\bar{\Psi}\trinr_{2}$. Since $\cA^*=\cA$ and the operator norm of an operator and its adjoint are equal,  $\|(\mathcal{A}+\mathcal{B}
'(\bar{\Psi})^*)^{-1}\|=\|(\mathcal{A}+\mathcal{B}
'(\bar{\Psi}))^{-1}\|$ and hence, there exists a constant $C$ independent of $h$ such that $\frac{1}{\|\mathcal{F}_{\bPsi}^{-1}\|_{\mathcal{L}(\bV+\bV_{\rm M})}}\ge C.$

\smallskip

\noindent For $\Phi \in \bV+\bV_\M$, the definition of $\mathcal{F}_\Psi$ in \eqref{adjlinop}, the boundedness property of $T(\bullet)$ and $B_\NC(\bullet,\bullet,\bullet)$ in Lemma\ref{Bnc bound}.$(b)$ imply
\begin{align}\label{eq.fpsi}
\|\mathcal{F}_{\bPsi}(\Phi)-\mathcal{F}_{\Psi_u}(\Phi)\|_{\mathcal{L}(\bV+\bV_{\rm M})}\le \| T[{\mathcal B}_{\NC}'(\bPsi-\Psi_u)^* (\Phi)] \|\lesssim \trinl \bPsi-\Psi_u\trinr_2.
\end{align}
Theorem \ref{th2.5} shows $G(\bar u)=\bPsi$, $G( u)=\Psi_u$ and the uniform boundedness of $\trinl (\mathcal{A}+\mathcal{B}'(\Psi_{u}))^{-1}\trinr_{\mathcal{L}(\bV',\bV)}$ whenever $u\in {\mathcal O}({\bar u})$. Hence, for $u_t= {u} + t(\bar u- u)$ and $\Psi_{t}=G(u_{t})$, mean value theorem, Theorem \ref{th2.5} and  $u \in {\mathcal O}({\bar u})$ prove
\begin{align*} 
& \trinl \bPsi- \Psi_u \trinr_2  
=\trinl { \int_{0}^1} G'(u_t)({\mathbf C}(\bar{ \bf u}-{\bf {u}})))  \: {\rm dt} \trinr_{2} 
=  \trinl { \int_{0}^1} (\mathcal{A}+\mathcal{B}'(\Psi_{t}))^{-1}({\mathbf C}(\bar {\bf u}-{\bf {u}}))  \: {\rm dt} \trinr_{2} \lesssim \|\bar u -  {u}\|_{L^2(\omega)}.
\end{align*}
Since $u$ is sufficently close to $\bar u$, \eqref{eq.fpsi} leads to
\begin{align*}
\|\mathcal{F}_{\bPsi}-\mathcal{F}_{\Psi_u}\|_{\mathcal{L}(\bV+\bV_{\rm M})}\le C \le \frac{1}{\|\mathcal{F}_{\bPsi}^{-1}\|_{\mathcal{L}(\bV+\bV_{\rm M})}}.
\end{align*}
An application of Lemma \ref{banach} concludes the proof.
\end{proof}
\begin{proof}[Proof of Theorem \ref{thm.adj.energy}]
\noindent{\bf Step 1 (isolates a crucial term).}
	 Let $\brho_\M:= I_\M \Theta_{u} -\Theta_{u,\M} \in \bV_\M$. The triangle inequality leads to 
	\begin{align}\label{triang}
	\trinl \Theta_{u}- \Theta_{u,\M}\trinr_{1,2,h} 
	\le \trinl (1- I_\M) \Theta_{u}\trinr_{1,2,h} 
	+ \trinl    (1  - J)  \brho_\M  \trinr_{1,2,h} + \trinl   J  \brho_\M \trinr_{1}.
	\end{align}
	Lemma \ref{Morley_Interpolation}.$(a)$ shows that $\Theta_u-I_\M \Theta_u $ is orthogonal to $\Phi_\M$ for all $\Phi_\M \in V_\M$ and so Lemma \ref{Morley_Interpolation}.$(b)$ and the Pythagoras theorem proves that
	\begin{equation}\label{pythagoras}
	h^{-2}\trinl \Theta_{u}- I_\M \Theta_{u} \trinr_{1,2,h}^2 \le C_I^2\trinl \Theta_{u}- I_\M \Theta_{u}\trinr_{\NC}^2=C_I^2(\trinl \Theta_{u}-  \Theta_{u,\M}\trinr_{\NC}^2-\trinl \brho_\M \trinr_\NC^2)
	\le C_I^2 \trinl \Theta_{u}-  \Theta_{u,\M}\trinr_{\NC}^2.
	\end{equation}
	Lemma \ref{hctenrich}.$(d)$ with $v=0$ and the Pythagoras theorem in the above displayed inequality show
	\begin{equation} \label{pythagoras1}
	h^{-1}	\trinl    \brho_\M  - J  \brho_\M  \trinr_{1,2,h}  \le \Lambda_\jc \trinl  \brho_\M \trinr_{\NC}
	\lesssim \trinl \Theta_{u}-  \Theta_{u,\M}\trinr_{\NC},
	\end{equation}
where $`\lesssim'$ absorbs $\Lambda_\jc$ and $C_I$. \eqref{triang}--\eqref{pythagoras1} concludes the first step and shows	
	\begin{equation}\label{step1.adjoint}
	\trinl \Theta_{u}- \Theta_{u,\M}\trinr_{1,2,h} 
	\lesssim { h} \trinl \Theta_{u}-  \Theta_{u,\M}\trinr_{\NC}+ \trinl   J  \brho_\M \trinr_{1}.
	\end{equation}	
	{\bf Step 2 (estimates \trinl $J  \brho_\M \trinr_1$ in 
	\eqref{step1.adjoint}).} For a given ${\bf g} \in {\bH}^{-1}(\Omega)$,
	consider the dual problem that   seeks $\bchi_{\bg} \in \bV$ such that 
\begin{align}
& A(\bchi_{\bg}, \Phi) +2 B(\Psi_u, \bchi_{\bg}, \Phi)= \langle {\bf g}, \Phi \rangle \; \; \; \; 
\text{ for all }  \Phi \in {\bV}. \label{chig}
\end{align}
The existence of solution to \eqref{chig} and the regularity results stated below follows from Theorem \ref{th2.5}.
Note that $ \bchi_{\bg}  \in \bV \cap {\bf H}^{2+\gamma}(\Omega)$ and 
\begin{align} \label{regular}
\trinl  \bchi_{\bg}  \trinr_2 \lesssim \trinl {\bf g}\trinr_{-1} \text{   and   }  
\trinl \bchi_{\bg} \trinr_{2+\gamma} \lesssim \trinl {\bf g}\trinr_{-1}.
\end{align}	
 Choose ${\bf g}=- \Delta J \brho_\M \in L^2(\O)$ and $\Phi= J \brho_\M \in \bV$. This and elementary algebra eventually lead to 
\begin{align} \label{enriched.1}
&\| \nabla J \brho_\M\|^2
	 =  A_\NC(\bchi_{\bg}, (J-1) \brho_\M) + 2(B_\NC(\Psi_u, \bchi_{\bg}, (J-1)\brho_\M ) + B_\NC(\Psi_u, \bchi_{\bg},(I_\M-1) \Theta_{u}))   \nonumber \\
	& \quad + (A_\NC(\bchi_{\bg},(I_\M -1)\Theta_{u}) +A_\NC(\bchi_{\bg},\Theta_{u}-\Theta_{u,\M})) + 	2B_\NC(\Psi_u, \bchi_{\bg},\Theta_{u}-\Theta_{u,\M}) 
	  = \sum_{i=1}^4 T_i.
	\end{align}
	{\bf Step 3} estimates the  terms $T_1, \cdots, T_4$. Lemma \ref{Anc.bound}.$(a)$ shows that
	\begin{equation*}
	T_1:=A_\NC(\bchi_{\bg}, (J-1) \brho_\M) \lesssim h^{\gamma}\trinl \brho_\M\trinr_\NC  \trinl \bchi_{\bg} \trinr_{2+\gamma}\lesssim  h^{\gamma}\trinl \Theta_u-\Theta_{u,\M}\trinr_\NC  \trinl \bchi_{\bg} \trinr_{2+\gamma}
	\end{equation*}
	with \eqref{pythagoras1} in the end. Lemmas \ref{Bnc bound}.$(e)$, \ref{hctenrich}.$(d)$ with $v=\brho_\M$, \ref{Morley_Interpolation}. $(b)$, \eqref{pythagoras} and \eqref{pythagoras1} imply
	\begin{align*}
	\frac{1}{2}T_2& :=B_\NC(\Psi_u, \bchi_{\bg}, (J-1)\brho_\M ) 
	+B_\NC(\Psi_u, \bchi_{\bg},(I_\M-1) \Theta_{u})
	\lesssim h^2\trinl \Psi_u\trinr_{2+\gamma} \trinl \bchi_{\bg}\trinr_{2+\gamma}\trinl  \Theta_{u}-\Theta_{u,\M}\trinr_{\NC}.
	\end{align*}
	Simple manipulations lead to 
	\begin{align} \label{t4plust5}
	T_3&:=  A_\NC(\bchi_{\bg},(I_\M-1) \Theta_{u}) +
	A_\NC((1-I_\M) \bchi_{\bg},\Theta_{u}-\Theta_{u,\M})
	 + A_\NC((1-J) I_\M\bchi_{\bg},\Theta_{u}-\Theta_{u,\M}) \nonumber \\ & \qquad + 
	A_\NC(J I_\M\bchi_{\bg},\Theta_{u}-\Theta_{u,\M}). 
	\end{align}
	Lemma \ref{Morley_Interpolation}.$(a)$ shows $A_\NC(\Phi_\M,I_\M \Theta_{u}-\Theta_{u})=0=A_\NC(\bchi_{\bg}-I_\M\bchi_{\bg},\Phi_\M)$ for all $\Phi_\M \in V_\M$. 
	This shows that the  first two terms in  \eqref{t4plust5} is
	$$A_\NC(\bchi_{\bg}-I_\M \bchi_{\bg},I_\M \Theta_{u}-\Theta_{u}) 
	+  A_\NC(\bchi_{\bg}-I_\M\bchi_{\bg},\Theta_{u}-\Theta_{u,\M})=0.$$
The boundedness of $A_\NC(\bullet,\bullet)$, Lemma \ref{hctenrich}.$(d)$ with $v=\bchi_{\bg}$ and Lemma \ref{Morley_Interpolation}.$(c)$ result in an estimate for the third term in \eqref{t4plust5} as 
\begin{align} \label{a20}
	A_\NC((1-J) I_\M\bchi_{\bg},\Theta_{u}-\Theta_{u,\M})\le h^\gamma\trinl \bchi_{\bg}\trinr_{2+\gamma}\trinl  \Theta_{u}-\Theta_{u,\M}\trinr_{\NC}.
	\end{align}
	Lemma \ref{hctenrich}.$(c)$ shows $A_\NC(J I_\M\bchi_{\bg}-I_\M \bchi_{\bg},\Theta_{u,\M})=0$. This, \eqref{auxiadje_12} and \eqref{auxiadje_1_dis} lead to an expression for the last term in \eqref{t4plust5} as
	\begin{align}
	 & A_\NC(J I_\M\bchi_{\bg},\Theta_{u}-\Theta_{u,\M})=A_\NC(J I_\M\bchi_{\bg},\Theta_{u})-A_\NC( I_\M\bchi_{\bg},\Theta_{u,\M})\nonumber\\
	&\qquad =(\Psi_{u}-\Psi_{d},J I_\M\bchi_{\bg})-2{B}(\Psi_{u}, J I_\M\bchi_{\bg}, \Theta_u)-(\Psi_{u,\M}- \Psi_d, I_\M\bchi_{\bg})+2{B}_{\NC}(\Psi_{u,\M},I_\M\bchi_{\bg},  \Theta_{u,\M})\nonumber\\
	&\qquad =(\Psi_{u}-\Psi_{d},(J-1) I_\M\bchi_{\bg})-(\Psi_{u,\M}- \Psi_u, I_\M\bchi_{\bg})-2{B}(\Psi_{u}, J I_\M\bchi_{\bg}, \Theta_u)+2{B}_{\NC}(\Psi_{u,\M},I_\M\bchi_{\bg},  \Theta_{u,\M}). \label{t4plust5a}
	\end{align}
		Lemma \ref{hctenrich}.$(b)$ shows $\Pi_0 \bz=0$ for $z=(J-1) I_\M\bchi_{\bg}$. This Cauchy-Schwarz inequality, Lemmas \ref{hctenrich}.$(d)$ with $v=\bchi_{\bg}$, Lemma \ref{staest}.$(a)$ and \ref{Morley_Interpolation}.$(b)$-$(c)$ lead to the estimate for the first two terms of \eqref{t4plust5a} as
\begin{align}\label{t4plust5b}
\begin{split}
	(\Psi_{u}-\Psi_{d}-\Pi_0(\Psi_{u}-\Psi_{d}),(J-1) I_\M\bchi_{\bg}) & \lesssim h^{2+\gamma}{\rm{osc}}_0(\Psi_{u}-\Psi_{d})\trinl \bchi_{\bg} \trinr_{2+\gamma},  \\
	(\Psi_{u,\M}- \Psi_u, I_\M\bchi_{\bg}) & \lesssim \trinl \Psi_{u,\M}- \Psi_u\trinr\trinl \bchi_{\bg} \trinr_{2}. 
	\end{split}
	\end{align}
	The terms involving trilinear forms in \eqref{t4plust5a} are estimated now. The  orthogonality property of $J$ in Lemma \ref{hctenrich}.$(c)$ shows that $B_{\NC}(\Psi_{u,\M},\bchi_{\bg}-J I_\M \bchi_{\bg},\P_0\Theta_u)=0.$ This and a simple manipulation (omitting a factor 2) lead to an expression for the last two terms of \eqref{t4plust5a}  combined with $T_4$ as 
	\begin{align} \label{star}
	\begin{split}
& {B}_{\NC}(\Psi_{u}-\Psi_{u,\M},(1- J I_\M)\bchi_{\bg}, \Theta_u)+
	{B}_{\NC}(\Psi_{u,\M}, (1- J I_\M)\bchi_{\bg}, (1-\P_0) \Theta_u)\\
	&\qquad 
	+{B}_{\NC}(\Psi_{u,\M},I_\M\bchi_{\bg},  \Theta_{u,\M})-B_\NC(\Psi_u,\bchi_{\bg},\Theta_{u,\M}). 
	\end{split}
	\end{align}
The triangle inequality with $I_\M \bchi_g$, Lemmas \ref{Bnc bound}.$(b)$, \ref{Morley_Interpolation}.$(c)$ and \ref{hctenrich}.$(d)$ with $v=\bchi_g$ result in
\begin{align} \label{stara}
{B}_{\NC}(\Psi_{u}-\Psi_{u,\M},(1- JI_\M)\bchi_{\bg}, \Theta_u)
	 \lesssim h^\gamma \trinl \bchi_{\bg} \trinr_{2+\gamma}\trinl \Theta_u \trinr_{2+\gamma}\trinl \Psi_u-\Psi_{u,\M} \trinr_\NC.
	 \end{align}
	Lemmas \ref{Bnc bound}.$(a)$, \ref{Morley_Interpolation}.$(c)$ and \ref{hctenrich}.$(d)$  with $v=\bchi_g$ show
	\begin{align} \label{starb}
	 {B}_{\NC}(\Psi_{u,\M}, (1- J I_\M)\bchi_{\bg}, (1-\P_0) \Theta_u) \lesssim h^\gamma \trinl \bchi_{\bg} \trinr_{2+\gamma}\trinl \Psi_u \trinr_{2+\gamma}\trinl \Theta_u-\P_0 \Theta_u  \trinr_{0,\infty}.
	 \end{align}
The integral mean property of $I_\M$ in Lemma \ref{Morley_Interpolation}.$(a)$ shows that $B_{\NC}(\Psi_{u,\M},I_\M\bchi_{\bg}-\bchi_{\bg},\P_0\Theta_u)=0.$ 	 This and a simple manipulation show that the last two terms in \eqref{star} can be rewritten as
	 \begin{align}
	&\hspace{-0.4cm} B_{\NC}(\Psi_u-\Psi_{u,\M},(1-I_\M)\bchi_{\bg}, \Theta_{u,\M}) +B_{\NC}(\Psi_u,(1-I_\M)\bchi_{\bg}, \Theta_u-\Theta_{u,\M})
	 +B_{\NC}(\Psi_u-\Psi_{u,\M},(I_\M-1)\bchi_{\bg},\Theta_u) \nonumber \\
	 &  \hspace{-0.4cm} +B_{\NC}(\Psi_{u,\M},(I_\M-1)\bchi_{\bg},(1-\P_0)\Theta_u)+B_{\NC}(\Psi_u-\Psi_{u,\M},\bchi_{\bg}, \Theta_u-\Theta_{u,\M}) +B_{\NC}(\Psi_{u,\M}-\Psi_u,\bchi_{\bg}, \Theta_{u})=\sum_{i=1}^6 \mT_i. \label{t1011213}
	\end{align}
	The terms $\mT_1,\cdots,\mT_6$ are estimated next. The boundedness and interpolation estimates in 
	Lemmas \ref{Bnc bound}.$(a)-(b)$ and \ref{Morley_Interpolation}.$(c)$ prove
	\begin{align*}
	\mT_1 & \lesssim h^\gamma \trinl \bchi_{\bg} \trinr_{2+\gamma}\trinl \Theta_u \trinr_{2+\gamma}\trinl \Psi_u - \Psi_{u,\M} \trinr_\NC,\, \mT_2 \lesssim h^\gamma \trinl \bchi_{\bg} \trinr_{2+\gamma}\trinl \Psi_u \trinr_{2+\gamma}\trinl \Theta_u-\Theta_{u,\M} \trinr_\NC \\
	\mT_3&\lesssim h^\gamma\trinl \bchi_{\bg} \trinr_{2+\gamma}\trinl \Theta_u \trinr_{2+\gamma}\trinl \Psi_u-\Psi_{u,\M} \trinr_\NC,\,
	\mT_4\lesssim h^\gamma \trinl \bchi_{\bg} \trinr_{2+\gamma}\trinl \Psi_u \trinr_{2+\gamma}\trinl\Theta_u-\P_0 \Theta_u \trinr_{{0,\infty}}\\
	\mT_5 &\lesssim \trinl \bchi_{\bg}\trinr_{2+\gamma}\trinl \Psi_u-\Psi_{u,\M} \trinr_\NC \trinl \Theta_u-\Theta_{u,\M} \trinr_\NC.
	\end{align*}	
	Lemma \ref{Bnc bound adjointH1error} shows
	$\mT_6 \lesssim \left(h^\gamma\trinl \Psi_u-\Psi_{u,\M} \trinr_\NC+\trinl \Psi_u-\Psi_{u,\M} \trinr\right)\trinl \bchi_{\bg} \trinr_{2+\gamma}\trinl \Theta_u \trinr_{2+\gamma}.$
	A substitution of $\mT_1$-$\mT_6$ in \eqref{t1011213} and the resulting estimate with \eqref{stara} and \eqref{starb} in \eqref{star} leads to a bound for the terms involving the trilinear form $B_\NC(\bullet,\bullet,
	\bullet)$ as
	\begin{align*}
& 	 \trinl \bchi_{\bg}  \trinr_{2+\gamma}\big( \trinl \Theta_u \trinr_{2+\gamma}\left(h^\gamma\trinl \Psi_u-\Psi_{u,\M} \trinr_\NC+\trinl \Psi_u-\Psi_{u,\M} \trinr\right)  +\trinl \Psi_u-\Psi_{u,\M} \trinr_\NC \trinl \Theta_u-\Theta_{u,\M} \trinr_\NC \nonumber \\
 &  \qquad +h^\gamma \trinl\Psi_u \trinr_{2+\gamma} (\trinl \Theta_u-\Theta_{u,\M} \trinr_\NC+\trinl\Theta_u-\P_0\Theta_u\trinr_{0,\infty})\big).
	\end{align*}
	 This expression and \eqref{t4plust5b} is first substituted in \eqref{t4plust5a}, the resulting expression and \eqref{a20} is substituted in \eqref{t4plust5}  and utilized in   \eqref{enriched.1} with bounds for $T_1$ and $T_2$. In combination with 
	  $\trinl \bchi_{\bg} \trinr_{2+\gamma} \lesssim \trinl {\bf g}\trinr_{-1} \lesssim \| \nabla J \brho_\M\|$ from \eqref{regular}, $\trinl \Theta_u \trinr_{2+\gamma},\, \trinl \Psi_u \trinr_{2+\gamma} \lesssim 1$ this yields
	\begin{align*}
	\| \nabla J \brho_\M\|&\lesssim h^\gamma\trinl \Theta_u \trinr_{2+\gamma}\trinl\Psi_u-\Psi_{u,\M}\trinr_\NC +\trinl\Psi_u-\Psi_{u,\M} \trinr_\NC \trinl\Theta_u-\Theta_{u,\M}\trinr_\NC+ (1+\trinl \Theta_u \trinr_{2+\gamma})\trinl\Psi_u-\Psi_{u,\M} \trinr\\
	&\quad  +h^\gamma(1+\trinl\Psi_u \trinr_{2+\gamma}) \trinl\Theta_u-\Theta_{u,\M}\trinr_\NC+h^\gamma\trinl\Psi_u \trinr_{2+\gamma}\trinl\Theta_u-\P_0\Theta_u\trinr_{0,\infty} +h^{2+\gamma}{\rm{osc}}_0(\Psi_{u}-\Psi_{d}).
	\end{align*}
	This, $\trinl \Theta_u \trinr_{2+\gamma},\, \trinl \Psi_u \trinr_{2+\gamma} \lesssim 1$ and \eqref{step1.adjoint} lead to the desired estimate.
\end{proof}

\end{appendices}

\end{document}